\newtheorem{theorem}{Theorem}[section] 
\newtheorem{claim}[theorem]{Claim}
\newtheorem{observation}[theorem]{Observation}
\theoremstyle{definition}
\newtheorem{definition}[theorem]{Definition}
\newtheorem{example}[theorem]{Example}
\newtheorem{problem}[theorem]{Problem}
\newtheorem{discussion}[theorem]{Discussion}
\newtheorem{hypothesis}[theorem]{Hypothesis}
\theoremstyle{remark}
\newtheorem{remark}[theorem]{Remark}
\newtheorem{conclusion}[theorem]{Conclusion}
\DeclareMathOperator{\hL}{hL}
\DeclareMathOperator{\hd}{hd}
\newcommand{\rest}{{\restriction}}
\newcommand{\dom}{{\rm dom}} 
\newcommand{\ap}{{\rm ap}}
\newcommand{\supp}{{\rm supp}}
\newcommand{\wilog}{{\rm without loss of generality}}
\newcommand{\blueq}[1]{{\color{blue} #1}}
\newcommand{\then}{{\underline{then}}}
\newcommand{\when}{{\underline{when}}}
\newcommand{\Then}{{\underline{Then}}}
\newcommand{\cH}{{\mathcal H}}
\newcommand{\cJ}{{\mathcal J}}
\newcommand{\cI}{{\mathcal I}}
\newcommand{\bbL}{{\mathbb L}}
\newcommand{\bbP}{{\mathbb P}}
\newcommand{\bbQ}{{\mathbb Q}}
\newcommand{\cU}{{\mathcal U}}
\newcommand{\cf}{{\rm cf}}
\newcommand{\pr}{{\rm pr}}
\def\mathunderaccent#1#2 {\let\theaccent#1\skewfactor#2
\mathpalette\putaccentunder}
\def\putaccentunder#1#2{\oalign{$#1#2$\crcr\hidewidth
\vbox to.2ex{\hbox{$#1\skew\skewfactor\theaccent{}$}\vss}\hidewidth}}
\def\name{\mathunderaccent\tilde-3 }
\newenvironment{PROOF}[2][\proofname.]
   {\begin{proof}[#1]}
   {\end{proof}}
\begin{document}
\makeatletter\def\shfiuwefootnote{\gdef\@thefnmark{}\@footnotetext}\makeatother\shfiuwefootnote{Version 2023-08-16. See \url{https://shelah.logic.at/papers/918/} for possible updates.}

\title {Many partition relations below density\\
 Sh918}
\author {Saharon Shelah}
\address{Einstein Institute of Mathematics\\
Edmond J. Safra Campus, Givat Ram\\
The Hebrew University of Jerusalem\\
Jerusalem, 91904, Israel\\
 and \\
 Department of Mathematics\\
 Hill Center - Busch Campus \\ 
 Rutgers, The State University of New Jersey \\
 110 Frelinghuysen Road \\
 Piscataway, NJ 08854-8019 USA}
\email{shelah@math.huji.ac.il}
\urladdr{http://shelah.logic.at}
\thanks{The author thanks Alice Leonhardt for the beautiful typing for the journal version up to 2019. Research supported by the United States-Israel Binational Science Foundation (Grant No. 2002323). For the changes after 2019, the author would like to thank the typist for his work and is also grateful for the generous funding of typing services donated by a person who wishes to remain anonymous. First Typed - 06/Dec/21.}

 %Previous version - 2011/August/18
 %formerly F821

\subjclass[2010]{Primary 03E35, 03E02; Secondary: 54A25, 54A35}

\keywords{set theory, independence, forcing, partition relations, topological cardinal invariants, hereditary density, hereditary Lindelof}

\date{August 16, 2023}

\begin{abstract}
    We force $2^\lambda$ to be large and for many pairs in the
    interval $(\lambda,2^\lambda)$ a strong version of the polarized
    partition relations hold.  We apply this to problems in general
    topology.  E.g. consistently, every $2^\lambda$ is successor of
    singular and for every Hausdorff regular space $X$, $\hd(X) \le
    s(X)^{+3}$, {\rm hL}$(X) \le s(X)^{+3}$ and better when $s(X)$ is 
    regular, via a half-graph partition relations.  
    For the case $s(X) = \aleph_0$ we get
    $\hd(X)$, $\hL(X) \le \aleph_2$.

    \noindent
    Minor changes in July, 2020. 
\end{abstract}

\maketitle

\numberwithin{equation}{section}

\setcounter{section}{-1}

%%%%%%%%%%%%%%%%%%%%%%%%%%%%%%%%%%%%%%%%%%%
% Annotated Content
\newpage

\section*{Annotated Content}

\S0 \quad Introduction, pg. \pageref{0}. 

\S1 \quad A Criterion for Strong Polarized Partition Relations, pg. \pageref{1}.

\begin{enumerate}
    \item[${{}}$]   [We give sufficient conditions for having strong
    versions of polarized partition relations after forcing.]
\end{enumerate}

\S2 \quad The forcing, pg. \pageref{2}.

\begin{enumerate}
    \item[${{}}$]   [Assume GCH for simplicity and $\mathbf p$ a parameter
    with  $\lambda < \mu$ regular and $\Theta \subseteq \text{
    Reg } \cap [\lambda,\mu^+)$ and we define $\bbQ_{\mathbf p}$ which adds
    $\mu$ Cohen subsets to $\lambda$ but have many kinds of supports, one
    for each $\theta \in \Theta$, influencing the order.]
\end{enumerate}

\S3 \quad Applying the criterion, pg. \pageref{3}.

\begin{enumerate}
    \item[${{}}$]   [The main result is that (cardinal arithmetic is
    changed just by making $2^\lambda = \mu$ and) using \S1 we prove the
    strong version of polarized partition relations hold in many instances.]
\end{enumerate}

%%%%%%%%%%%%%%%%%%%%%%%%%%%%%%%%%%%%%%%%%%%%%
% Section 0
\newpage

\section{Introduction}\label{0} 

Out motivation is a problem in general topology and for this we get a consistency result in the partition calculus.

In Juhasz-Shelah \cite{Sh:899} was proved: if $(\forall \mu < \lambda)(\mu^{\aleph_0} < \lambda)$ \then \, there is a c.c.c. forcing notion that adds a regular topological space,
hereditarily Lindel\"of of density $\lambda$.

A natural question asked there (\cite{Sh:899}) is:

\begin{problem}\label{0z.1}  
    Assume $\aleph_1 < \lambda \le 2^{\aleph_0}$. Does there exist (i.e., provably in ZFC) a hereditary  Lindel\"of regular space of density $\lambda$?
    
    On cardinal invariants in general topology see \cite{Ju80}.
    
    We prove the consistency of a negative answer, in fact of stronger results by
    proving the consistency of strong variants of polarized partition
    relations (the half-graphs, see below).  They are strong  enough to resolve the question about hereditary density (and hereditary Lindel\"of).  
    Moreover, if $\lambda = \lambda^{<\lambda} <
    \mu = \mu^{< \mu}$ (and G.C.H. holds in $[\lambda,\mu)$), then there is a
    forcing extension making $2^\lambda \ge \mu$ neither  adding new $(< \lambda)$-sequences nor
    collapsing cardinals such that for many pairs $\lambda_* < \mu_*$ in
    the interval we have the appropriate partition relations. 
    
    An earlier result is in the paper \cite[Theorem 1.1, pg.357]{Sh:276}
    and it states the following: if $\lambda > \kappa > \mu$ are regular
    cardinals, $\lambda > \kappa^{++}$, then there is a cardinal and
    cofinality preserving forcing that makes $2^\mu = \lambda$ and
    $\kappa^{++} \rightarrow (\kappa^{++},(\kappa;\kappa)_\kappa)^2$ in
    addition to the main result there $2^\lambda \rightarrow
    [\lambda]^2_3$, see more in \cite{Sh:289}, \cite{Sh:288}, \cite{Sh:481}, \cite{Sh:546}.  The applied notion of forcing $(Q,\le)$
    is the following: $p \in Q$ if $p$ is a function from a subset $\dom(p)
    \in [\lambda]^{\le \kappa}$ into Add$(\mu,1) - \{\emptyset\}$ where
    Add$(\mu,1)$ denotes the forcing adding a Cohen subset of $\mu$.  $p
    \le q$ if $\dom(p) \supseteq \dom(q), p(\alpha) \le q(\alpha)$
    for $\alpha \in \dom(q)$ and $|\{\alpha \in \dom(q):p(\alpha) \ne q(\alpha)\}| < \mu$.
    
    For simultaneously many $n$-place polarized partition relation  Shelah-Stanley \cite{Sh:608} deals with it but there are problems there, so we do not rely on it.
    
    Our main result in general topology is 
    Theorem \ref{1t.41}, by it: consistently, G.C.H. fails badly ($2^\mu$ is a successor of a limit cardinal $> \mu$ except when $\mu$ is strong limit singular and then $2^\mu = \mu^+$) and $\hd(X)$, $\hL(X)$ are $\le s(X)^{+3}$ for every Hausdorff regular $X$ and $|X| \le 2^{(\hd(X))^+},w(X) \le 2^{(\text{hL}(X))^+}$ for any Hausdorff $X$.  (Usually $s(X)^{+2}$ suffice so in particular  ``$X$ is hereditary Lindel\"of $\Rightarrow X$ has density 
    $\le \aleph_2$".
    
    Concerning partition relations we give a generalization of the earlier result explained above, namely, the consistency of $2^{\aleph_0} = \lambda$ and $\mu^{++} \rightarrow (\mu,(\mu;\mu)_\mu)^2$ simultaneously holding for each regular cardinal $\mu$ such that $\mu^{++} \le \lambda$.  This gives a
    model in which though GCH fails badly, we have strong enough partition relations implying that the hereditary density and the hereditary Lindel\"of numbers of a $T_3$ space $X$ are bounded by
    $s(X)^{+3}$ where $s(X)$ stands for spread.
    
    The notion of forcing $(P,\le)$ used for the argument is defined as
    follows.  For each regular cardinal $\mu < \lambda$ define the
    following equivalence relation $E_\mu$ on $\lambda$.  $xE_\mu y$ iff
    $x + \mu = y + \mu$.  Let $[x]_\mu$ denote the equivalence class of
    $x$.  $p \in P$ if $p$ is a function from some set $\dom(p) \subseteq
    \lambda$ into $\{0,1\}$ such that $|[x]_\mu \cap \dom(p)| <
    \mu$ holds for every successor $\mu < \lambda,x < \lambda$.  $p \le q$
    if $p \supseteq q$ and for every successor $\mu < \lambda$ we have:
    \[
    |\{[x]_\mu:\emptyset \ne \dom(q) \cap [x]_\mu \ne \text{Dom}(p) \cap [x]_\mu\}| < \mu.
    \]
    
    This notion of forcing $(P,\le)$, in a most remarkable way, imitates concurrently several different posets $(Q,\le)$ as defined above.  Not surprisingly, in order to show that $(P, \le)$ is cardinal and cofinality preserving, the author uses ideas similar to those in \cite{Sh:276}.
    
    In order to prove the main claim, that is, the partition relation, we use the following trick: we find a condition $\bar p$ such that the dense sets we are interested in are all dense below $\bar p$.  It suffices, therefore, to show that forcing with the part below $\bar p$ gives the required result, and this reduces the problem to showing that a certain notion of forcing $(R,\le)$ forces the sought-for-partition relation where $|R|$ is small (compared to $\mu$).  As $(R,<)$ is close to the poset $(Q,<)$ of \cite{Sh:276}, an elementary sub-model argument similar to the one there applies.
    
    The exposition of the method is axiomatic; the author formulates the most general situation where this method works, and then specifies it to the situation sketched above. This is not necessarily the optimal description for those who are only interested in the application given.  There is, however, reason for the peculiar way of presenting
    this proof: we would like to include this method into the tool kit set, and simply quote it at possible later applications.
\end{problem}

Recall (first appeared in Erd\"os-Hajnal \cite{ErHa78}, but probably raised by Galvin in letters in the mid seventies):

\begin{definition}\label{0z.4}  
    1) $\lambda \rightarrow (\mu;\mu)^2_\kappa$ means that: 
    
    for every $\mathbf c:[\lambda]^2
    \rightarrow \kappa$ there are $\varepsilon$ and $\alpha_i,\beta_i$ for $i < \mu$ such that:
    
    \begin{enumerate}
        \item[$(a)$]   $\varepsilon < \kappa,$
        
        \item[$(b)$]   if $i < j < \mu$ then $\alpha_i < \beta_i < \alpha_j < \lambda,$
        
        \item[$(c)$]   if $i \le j < \mu$ then $\mathbf c\{\alpha_i,\beta_j\} =
        \varepsilon$.
    \end{enumerate}
    
    2) We can replace $\mu$ by an ordinal and if $\kappa=2$ we may omit it.
\end{definition}

\begin{definition}\label{0.3}
    1) Let $\lambda \rightarrow (\mu,(\mu;\mu)_\kappa)^2$ means that:
    
    for every $\mathbf c:[\lambda]^2 \rightarrow 1 + \kappa$ there are 
    $\varepsilon$ and $\alpha_i,\beta_i$ for $i < \mu$ such that:
    
    \begin{enumerate}
        \item[$(a)$]   $\varepsilon < \kappa,$
        
        \item[$(b)$]   $\alpha_i < \beta_i < \alpha_j < \lambda$ for $i < j < \mu,$
        
        \item[$(c)_0$]   if $\varepsilon = 0$ then $i < j \Rightarrow \mathbf
        c\{\alpha_i,\alpha_j\} = \varepsilon$, so we can forget the $\beta_i$'s, 
        
        \item[$(c)_1$]   if $\varepsilon \ge 1$ then $i \le j \Rightarrow
        \mathbf c\{\alpha_i,\beta_j\} = \varepsilon$.
    \end{enumerate}
    
    2) In part (1) if $\kappa =1$ we may omit it.  Above replacing $\mu$ by
    ``$< \mu$" means ``for every $\xi < \mu$ we have ....".
    
    We thank Shimoni Garti for many corrections and Istvan Juhasz for  questions and historical remarks; we may continue this research in 
    \cite{Sh:F884}.
\end{definition}

%%%%%%%%%%%%%%%%%%%%%%%%%%%%%%%%%%%%%%%%%%%%%%%%%%%%%%
% Section 1
\newpage

\section{Strong polarized partition relations}\label{1} 

We deal with sufficient conditions on a forcing notion for preserving such partition relations.  For this, we use an expansion of a forcing notion.  Instead of the usual pair $(Q,\le_{\mathbf Q})$, namely,
the underlying set and the partial order, we use a quadruple of the form $\mathbf Q = (Q,\le_{\mathbf Q},\le^{\pr}_{\mathbf Q}$, ap$_{\mathbf Q})$.

The ``pr" stands for pure, and the ``ap" stands for apure.  Both are included (as partial orders) in $\mathbf Q$.

\begin{discussion}
    We define (below) the notion of 
    $``(\lambda,\theta,\xi)$-forcing" to give a sufficient condition for
    appropriate cases of the partition relations defined above to hold.
    We start with the quadruple $\mathbf Q = (Q,\le_{\mathbf Q},\le^{\pr}_{\mathbf
    Q},\ap_{\mathbf Q})$ such that $q \in Q \Rightarrow 
    \ap_{\mathbf Q}(q) \subseteq Q$ and $\le_{\mathbf
    Q},\le^{\pr}_{\mathbf Q}$ are quasi orders on $Q$.  The
    idea is that if $r \in \ap_{\mathbf Q}(q)$ then $r$ and $q$ are
    compatible in $\bbQ$, close to ``$r$ is an a-pure extension of $q$".
\end{discussion}

\begin{definition}
\label{1c.15}  
1) We say that $\mathbf Q$ is a 
$(\chi^+,\theta,\xi)$-forcing notion \when \, $\chi^+,\theta$ are
regular uncountable cardinals, $\xi$ an ordinal and $\circledast$
below holds; in writing $(\chi^+,\theta,< \zeta)$ we mean that 
$\circledast$ holds for every $\xi < \zeta$; also we can replace
$\chi^+$ by $\lambda$:

\begin{enumerate}
\item[$\circledast$]   $(a) \quad \mathbf Q = (Q,\le_{\mathbf Q},
\le^{\pr}_{\mathbf Q},\ap_{\mathbf Q}),$

\item[${{}}$]   $(b) \quad \mathbb Q = (Q,\le_{\mathbf Q})$ is a forcing
notion (i.e. a quasi order, so $\Vdash_{\mathbf Q}$ means 
$\Vdash_{\bbQ},$ 

\hskip25pt and $p \in \mathbf Q$ means 
$p \in Q$ and $\mathbf V^{\mathbf Q}$ means $\mathbf V^{\bbQ}$ and

\hskip25pt $\name{\mathbf G}$ is the $\bbQ$-name of the generic set), 

\item[${{}}$]   $(c) \quad \le^{\pr}_{\mathbf Q}$ is a quasi order on $Q$
and $p \le^{\pr}_{\mathbf Q} q$ implies $p \le_{\mathbf Q} q,$

\item[${{}}$]   $(d)(\alpha) \quad$ ap$_{\mathbf Q}$ is a function
with domain $Q,$

\item[${{}}$]  $\quad (\beta) \quad$ for $q \in {\mathbf Q}$
we have\footnote{it is natural to demand $q \in \ap_{\mathbf Q}(q)$,
but not really necessary (if we do not demand it, this 
just complicates a little $\circledast(C)(d)$).} 
$q \in \ap_{\mathbf Q}(q) \subseteq Q,$ 

\item[${{}}$]   $\quad (\gamma) \quad r \in \ap_{\mathbf Q}(q) 
\Rightarrow r,q$ are compatible in $\mathbb Q$; moreover,

\item[${{}}$]   $\quad (\gamma)^+ \quad$ if $r \in \ap_{\mathbf
Q}(q) \wedge q \le^{\pr}_{\mathbf Q} q^+$ then $q^+,r$ are
compatible in $\bbQ$

\hskip25pt  moreover there is $r^+ \in \ap_{\mathbf Q}(q^+)$
such that $q^+ \Vdash_{\mathbf Q} ``r^+ \in \name{\mathbf G}_{\mathbf Q} 
\Rightarrow$

\hskip25pt $r \in \name{\mathbf G}_{\mathbf Q}"$\footnote{no harm in 
asking that $r \le^{\pr}_{\mathbf Q} s$ and 
$s \in \ap_{\mathbf Q}(q^+)$ and $q^+ \le s$ for some $s$.  Why
this does not follow from our assumption?  
By the present demand $r^+,q^+$ have a common $\le$-upper bound which
is $s$, so $s \Vdash ``q^+,r^+ \in \name{\mathbf G}_{\mathbf Q}$
hence $r \in \name{\mathbf G}_{\mathbf Q}"$ so \wilog \, $r \le s$,
\underline{but} this does not say $q \le^{\pr}_{\mathbf Q} s$.}, 

\item[${{}}$]   $(e) \quad (Q,\le^{\pr}_{\mathbf Q})$ is $(<
\theta)$-complete, i.e. any $\le^{\pr}_{\mathbf Q}$-increasing
sequence of length

\hskip25pt  $< \theta$ has a $\le^{\pr}_{\mathbf Q}$-upper bound
is $Q,$

\item[${{}}$]  $(f) \quad (Q,\le^{\pr}_{\mathbf Q})$ satisfies the
$\chi^+$-c.c.,

\item[${{}}$]   $(g) \quad$ if $\bar q = \langle q_\varepsilon \colon \varepsilon <
\theta\rangle$ is $\le^{\pr}_{\mathbf Q}$-increasing \then \,
\footnote{Note that: we can restrict ourselves to the case $q_0 \in
{\cI}$, where ${\cI}$ is a dense subset of $\bbQ$.  Also we can restrict
ourselves to the set of $\bar q$ sequences which is the set of plays of a
 suitable game with one player using a fixed strategy, etc.} for
stationary many limit 

\hskip25pt ordinals $\zeta < \theta$, the sequence $\bar q
\restriction \zeta$ has an exact $\le^{\pr}_{\mathbf Q}$-upper bound,

\hskip25pt see part (2) below, 

\item[${{}}$]   $(h) \quad$ if $\langle q_\varepsilon \colon \varepsilon <
\theta\rangle$ is $\le^{\pr}_{\mathbf Q}$-increasing and
$p_\varepsilon \in \ap_{\mathbf Q}(q_\varepsilon)$ for

\hskip25pt  $\varepsilon < \theta$ and $\xi < \theta$ 
\then \, for some $\zeta < \theta$ 
we have $q_\zeta
\Vdash_{\mathbf Q}$ ``if $p_\zeta \in \name {\mathbf G}_{\mathbf Q}$ 

\hskip25pt  then $\xi \le \rm{otp}\{\varepsilon <
\zeta \colon p_\varepsilon \in \name{\mathbf G}_{\mathbf Q}\}"$,

\item[${{}}$]   $(i) \quad$ if $q \in \mathbf Q$ \then \,
$\ap_{\mathbf Q}(q)$ has cardinality $< \theta$, 

\item[${{}}$]   $(j) \quad$ if $q_* \le r$ \then \, there
is a $(q_*,r)$-witness $(q,p)$ which means:

\begin{enumerate}
    \item[${{}}$]  $\bullet_1 \quad q_* \le^{\pr}_{\mathbf Q} q,$
    
    \item[${{}}$]  $\bullet_2 \quad p \in \ap_{\mathbf Q} (q_*),$
    
    \item[${{}}$]   $\bullet_3 \quad q \Vdash_{\mathbb Q} ``p \in \name{\mathbf
    G} \Rightarrow r \in \name{\mathbf G}"$.
    \end{enumerate}
\end{enumerate}

2) Assume $\mathbf Q$ satisfies clauses (a)-(e) of part (1).

Let $\bar q = \langle q_\varepsilon \colon \varepsilon < \delta\rangle$ be a
$\le^{\pr}_{\mathbf Q}$-increasing sequence of conditions, $\delta
< \theta$ a limit ordinal.  We say that $q$ is an
exact $\le^{\pr}_{\mathbf Q}$-upper bound of $\bar q$ \when \,
$\varepsilon < \delta = \ell g(\bar q) \Rightarrow q_\varepsilon
\le^{\pr}_{\mathbf Q} q$ and:

\begin{enumerate} 
\item[$(*)_{\bar q,q}$]   if $p \in \ap_{\mathbf Q}(q)$ \then \,
for some $\varepsilon < \delta$ and $p' \in 
\ap_{\mathbf Q}(q_\varepsilon)$, we have $\Vdash_{\mathbf Q}$ ``if $q,p'
\in \name{\mathbf G}_{\mathbf Q}$ then $p \in \name{\mathbf G}_{\mathbf Q}$".
\end{enumerate}
\end{definition}

\begin{remark}
\label{1c.17}  
Can we weaken clause (i) of $\circledast$
of \ref{1c.15}(1) to ``cardinality $\le \theta$"?

1) Here it mostly does not matter, \underline{but} in one point of the proof
of \ref{1c.25} it does: in proving $\circledast_4$ there, choosing
$\zeta(*)$ such that it will be possible to choose $\varepsilon(*)$.

2) There is a price for demanding a strict inequality.  The price is (in
\ref{2B.35}(1)) that, recalling $\kappa = \kappa_{\mathbf y}$, 
instead of using ap$_{\mathbf y}(q) = \{r \colon q
\le^{\ap}_\kappa r \in Q_{\mathbf y}\}$ we use
ap$_{\mathbf y}(q) = \{r \colon q \le^{\ap}_\kappa r \in Q_{\mathbf y}$ and
supp$_\kappa(q,r) \subseteq \text{ supp}_\theta
(p^{\mathbf y}_{\alpha_{\mathbf y}(q)},q)\}$.
\end{remark}

\begin{claim}
\label{1c.25}  
If $\mathbf Q$ is a $(\chi^+,\theta,\xi_*)$-forcing
notion, $\kappa < \theta = \text{\rm cf}(\theta)$ and 
$\chi = \chi^{< \theta}$ \then \, $\chi^+ \rightarrow 
(\xi_*,(\xi_*;\xi_*)_\kappa)^2$ holds in $\mathbf V^{\mathbf Q}$.
\end{claim}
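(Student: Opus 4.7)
The plan is to force the partition relation by producing, above an arbitrary condition $r^* \in Q$, the required monochromatic/half-graph configuration for a given $\bbQ$-name $\name{\bold c}$ of a coloring $[\chi^+]^2 \to 1 + \kappa$. First I would fix a continuous $\subseteq$-increasing elementary chain $\langle N_\alpha : \alpha < \chi^+ \rangle$ of submodels of $(\cH(\chi^{++}), \in, <^*)$, each of cardinality $\chi$ and closed under sequences of length $< \theta$ (possible since $\chi^{<\theta} = \chi$), with $\bold Q, \name{\bold c}, r^*, \theta, \kappa, \xi_* \in N_0$. The ordinals $\delta_\alpha = N_\alpha \cap \chi^+$ form a club in $\chi^+$, and serve as the candidates for the $\alpha_i$'s.

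Next, for each $\alpha < \chi^+$, I would use clause (j) of the definition (together with elementarity) to produce inside $N_{\alpha + 1}$ a witnessing triple: a condition $q_\alpha \ge^{\mathrm{pr}}_{\bold Q} r^*$, an element $p_\alpha \in \mathrm{ap}_{\bold Q}(r^*)$, an ordinal $\beta_\alpha \in (\delta_\alpha, \delta_{\alpha+1})$, and a color $\varepsilon_\alpha < 1 + \kappa$, such that $q_\alpha \Vdash_{\bold Q} \text{``}p_\alpha \in \name{\bold G} \Rightarrow \name{\bold c}\{\delta_\alpha, \beta_\alpha\} = \varepsilon_\alpha\text{''}$. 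By clause (i) there are $< \theta \le \chi$ candidates for $p_\alpha$, and $1 + \kappa < \theta$ candidates for $\varepsilon_\alpha$, so by a pigeonhole there is a stationary $S \subseteq \chi^+$ along which $(p_\alpha, \varepsilon_\alpha) = (p^*, \varepsilon^*)$. The $\chi^+$-c.c.\ of $\le^{\mathrm{pr}}_{\bold Q}$ (clause (f)) and the $<\theta$-completeness of $\le^{\mathrm{pr}}_{\bold Q}$ (clause (e)) then allow us to refine $S$ so that the $q_\alpha$'s for $\alpha \in S$ can be amalgamated into a $\le^{\mathrm{pr}}_{\bold Q}$-increasing chain.

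The crux is the use of clauses (e)--(h). Enumerate $\langle \alpha_\varepsilon : \varepsilon < \theta\rangle$ increasingly in $S$ and recursively build a $\le^{\mathrm{pr}}_{\bold Q}$-increasing chain $\bar q_\varepsilon$ with $\bar q_\varepsilon \ge^{\mathrm{pr}}_{\bold Q} q_{\alpha_\varepsilon}$, taking exact upper bounds at stationary many limits by clause (g). Via clause (d)$(\gamma)^+$ lift $p^*$ to $p^+_\varepsilon \in \mathrm{ap}_{\bold Q}(\bar q_\varepsilon)$ so that $\bar q_\varepsilon \Vdash \text{``}p^+_\varepsilon \in \name{\bold G} \Rightarrow p^* \in \name{\bold G}\text{''}$; hence $p^+_\varepsilon$ still forces the color of $\{\delta_{\alpha_\varepsilon}, \beta_{\alpha_\varepsilon}\}$ to be $\varepsilon^*$. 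Now clause (h) applied to $\langle (\bar q_\varepsilon, p^+_\varepsilon) : \varepsilon < \theta \rangle$ gives a $\zeta < \theta$ such that $\bar q_\zeta$ forces $\mathrm{otp}\{\varepsilon < \zeta : p^+_\varepsilon \in \name{\bold G}\} \ge \xi_*$, and on this set the pairs $(\delta_{\alpha_\varepsilon}, \beta_{\alpha_\varepsilon})$ are monochromatic in color $\varepsilon^*$.

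The principal obstacle is the \emph{half-graph} rather than diagonal condition $(c)_1$: for $\varepsilon^* \ge 1$ we need $\name{\bold c}\{\delta_{\alpha_i}, \beta_{\alpha_j}\} = \varepsilon^*$ for all $i \le j$, not just $i = j$. To arrange this, at each stage $\varepsilon$ one must inside $N_{\alpha_\varepsilon + 1}$ pure-extend $\bar q_\varepsilon$ (iterating clause (j) $\varepsilon < \theta$ times, which is legal by $<\theta$-completeness) to decide the finitely-many-so-far colors $\name{\bold c}\{\delta_{\alpha_i}, \beta_{\alpha_\varepsilon}\}$ for $i < \varepsilon$, and then run the pigeonhole so that these are uniformly $\varepsilon^*$ — which is feasible precisely because $\varepsilon^*$ was already fixed in the thinning step. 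Clause (h) then delivers the full half-graph of length $\xi_*$. The homogeneous case $(c)_0$ (when $\varepsilon^* = 0$) falls out of the same construction by discarding the $\beta$-coordinate, and only needs the diagonal output.
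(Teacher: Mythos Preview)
Your proposal has a genuine gap at exactly the point you flag as the ``principal obstacle''.

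Your pigeonhole step fixes $(p^*,\varepsilon^*)$ based only on the \emph{diagonal} pairs $\{\delta_\alpha,\beta_\alpha\}$: you have arranged that (conditionally on $p^*$) $\name{\bold c}\{\delta_\alpha,\beta_\alpha\}=\varepsilon^*$ for $\alpha\in S$. For the half-graph you need $\name{\bold c}\{\delta_{\alpha_i},\beta_{\alpha_j}\}=\varepsilon^*$ for all $i\le j$. In your last paragraph you propose, at stage $\varepsilon$, to pure-extend and ``run the pigeonhole so that these are uniformly $\varepsilon^*$''. But once $\delta_{\alpha_0},\dots$ (up to $|\varepsilon|$ many, not ``finitely many'') are already committed, there is no pigeonhole available: you are not choosing among $\chi^+$ fresh objects, you are trying to force $|\varepsilon|$ specific values to all equal a \emph{pre-assigned} color $\varepsilon^*$, and nothing in your setup guarantees that any choice of $\alpha_\varepsilon\in S$ (or of $\beta_{\alpha_\varepsilon}$) achieves this. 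The same objection applies to your treatment of $(c)_0$: that case needs $\name{\bold c}\{\alpha_i,\alpha_j\}=0$ for $i<j$, which is again off-diagonal information, not ``diagonal output''.

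The paper's proof handles this by a completely different mechanism. It fixes a \emph{single} target ordinal $\delta_*=\min(\chi^+\setminus M_\theta)$ for one elementary chain of length $\theta$ (not $\chi^+$), and proves a density lemma ($\circledast_4$): there is a triple $(q^1,p,\iota)$ such that for \emph{every} $q\ge^{\text{pr}}q^1$ and every $\bbL_{\theta,\theta}$-type $\varphi$ realized by $(\delta_*,q)$, one can reflect to some $(\delta,q')\in M_\theta$ with $\delta<\delta_*$ and find $q''\ge^{\text{pr}}q$ forcing (conditionally on $p$) $\name{\bold c}\{\delta,\delta_*\}=\iota$. This is not a pigeonhole; it is proved by contradiction, building a $\le^{\text{pr}}$-increasing $\theta$-sequence that systematically tries \emph{all} pairs $(p,\iota)$ with $p\in\bigcup_\varepsilon\text{ap}(q^*_\varepsilon)$ and $\iota<\kappa$ (possible by clause (i) and $\kappa<\theta$), and then at a stage $\zeta(*)$ with an exact upper bound (clause (g)) a reflection via the $\chi^+$-c.c.\ of $\le^{\text{pr}}$ produces the forbidden witness. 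Once $\iota_*$ is fixed with this universal property, the $\alpha_\zeta$'s are obtained by reflecting $(\delta_*,q_\zeta)$ inside $M_\theta$, so that $\{\alpha_\zeta,\delta_*\}$ always gets color $\iota_*$; the $\beta_\zeta$'s are then obtained by reflecting $\delta_*$ \emph{again} (together with the whole configuration), so that the information $\name{\bold c}\{\alpha_\varepsilon,\delta_*\}=\iota_*$ transfers to $\name{\bold c}\{\alpha_\varepsilon,\beta_\zeta\}=\iota_*$ for $\varepsilon\le\zeta$. This double reflection against a single fixed $\delta_*$ is what makes the half-graph condition come for free, and it is absent from your outline.
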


\begin{remark}
We can replace $\chi^+$ by ``regular $\chi'$ such
that $\alpha < \chi' \Rightarrow |\alpha|^{< \theta} < \chi'$".
\end{remark}

\begin{PROOF}{\ref{1c.25}}
Let $\lambda_*$ be large enough (so in particular $\mathbf
Q,\theta,\dotsc, \in {\cH}(\lambda^+_*))$.  Choose a well ordering
$<^*_{\lambda^+_*}$ of the set $\cH(\lambda^+_*)$.
Recalling Definition \ref{1c.15} clearly $\theta > \aleph_0$, hence
\wilog \, $\kappa$ is infinite, so $1 + \kappa = \kappa$.

Toward contradiction assume $p^* \Vdash_{\mathbf Q} ``\name{\mathbf c}$ 
is a function from $[\chi^+]^2$ to $\kappa"$ is a counterexample.

We now choose $\bar M$ such that:

\begin{enumerate}
    \item[$\circledast_1$]   $(a) \quad \bar M = \langle M_\alpha \colon \alpha
    \le \theta\rangle$,
    
    \item[${{}}$]   $(b) \quad M_\alpha \prec ({\cH}(\lambda^+_*),\in),$
    
    \item[${{}}$]   $(c) \quad M_\alpha$ has cardinality $\chi$,
    
    \item[${{}}$]  $(d) \quad [M_\alpha]^{< \theta} \subseteq M_\alpha$
    if $\alpha$ is non-limit,
    
    \item[${{}}$]   $(e) \quad M_\alpha$ is $\prec$-increasing continuous,
    
    \item[${{}}$]   $(f) \quad \mathbf Q,p^*,\name{\mathbf c}$ belong to 
    $M_\alpha$ and $\chi +1 \subseteq M_\alpha$,
    
    \item[${{}}$]   $(g) \quad \bar M \restriction (\alpha +1) \in M_{\alpha +1}$. 
\end{enumerate}

Note that $\chi = \chi^{< \theta}$ implies $\theta < \chi^+$, so let

\begin{enumerate}
    \item[$\circledast_2$]   $\delta_* := \min(\chi^+ \backslash
    M_\theta)$.
\end{enumerate}

We shall now prove:

\begin{enumerate}
    \item[$\circledast_3$]   if $q \in \mathbf Q$ and $\varphi(x,y) \in 
    \bbL_{\theta,\theta}$ is a formula with parameters from $M_\theta$ such that
     $({\cH}(\lambda^+_*),\in,<^*_{\lambda^+_*}) \models
     \varphi[\delta_*,q]$ \then \, for some pair $(\delta,q') \in  
    M_\theta$ we have:
    
    \begin{enumerate}
        \item[$(a)$]   $\delta < \delta_*$,
        
        \item[$(b)$]  $({\cH}(\lambda^+_*),\in,<^*_{\lambda^+_*})
         \models \varphi[\delta,q']$,
        
        \item[$(c)$]  $q',q$ has a common $\le^{\pr}_{\mathbf Q}$-upper bound.
    \end{enumerate}
\end{enumerate}

Why $\circledast_3$ holds?  Let $\bar r = 
\langle r_\zeta \colon \zeta < \zeta^*\rangle$ list $\mathbf Q$,
each member appearing $\chi^+$ times,
now \wilog \, $\bar r \in M_0$ so necessarily we can find $\zeta_1 \in \zeta^*
\backslash M_\theta$ such that $q = 
r_{\zeta_1}$ and let $\zeta_2 = \min(M_\theta \cap
(\zeta^*+1) \backslash \zeta_1)$, of course $\zeta^* \in M_\theta$ and
$\zeta_2 \in M_\theta$ and $\zeta_1 < \zeta_2 \, \wedge$ cf$(\zeta_2) >\chi$.

Let
\[
Y = \{q' \in \mathbf Q \colon ({\cH}(\lambda^+_*),\in,<^*_{\lambda^+_*})
\models (\exists x)(\varphi(x,q') \wedge x \in \chi^+)\}.
\]

Recall that $\chi^{< \theta} = \chi$, so

\begin{enumerate}
    \item[$\odot_{3.1}$]   $Y \in M_\theta,Y \subseteq \mathbf Q$ and $q \in Y$.
\end{enumerate}

Now we ask:

\begin{enumerate}
    \item[$\odot_{3.2}$]   is there $Z \subseteq Y$ of cardinality
    $\le \chi$ such that for every $q'' \in Y$ for
    at least one $q' \in Z$ the pair $(q',q'')$ is
    $\le^{\pr}_{\mathbf Q}$-compatible? 
\end{enumerate}

Assume toward contradiction that the answer is negative, 
then in particular $|Y| > \chi$ and
we can choose $r_\varepsilon \in Y$ by induction on
$\varepsilon < \chi^+$ such that $\zeta < \varepsilon \Rightarrow$ the
pair $(r_\zeta,r_\varepsilon)$ is $\le^{\pr}_{\mathbf
Q}$-incompatible.   Why? In stage $\varepsilon$ try to use $Z :=
\{r_\zeta \colon \zeta < \varepsilon\}$, so $Z \subseteq Y$ has cardinality
$\le|\varepsilon| \le \chi$, so some $r_\varepsilon \in Y$ can serve as
$q''$ in $\odot_{3.2}$, by our assumption toward contradiction.  
Hence $\langle r_\varepsilon \colon \varepsilon <
\chi^+\rangle$ contradict clause (f) of Definition \ref{1c.15}(1).  So
the answer to $\odot_{3.2}$ is \underline{yes}, hence there is such $Z \in
M_\theta$, but $\chi +1 \subseteq M_\theta$ hence $Z \subseteq
M_\theta$.

So apply the property of $Z$, with $q$ standing for $q''$, so 
there is $q' \in Z \subseteq \mathbf Q \cap M_\theta$ such that
the pair $(q',q)$ is $\le^{\pr}_{\mathbf Q}$-compatible; but
$Z \subseteq Y$ hence by the definition of $Y$ there is 
$\delta \in \chi^+$ such that $({\cH}(\lambda^+_*),\in,<^*_{\lambda^+_*})
\models \varphi[\delta,q']$, and as
$q' \in Z \subseteq M_\theta$ \wilog \, $\delta \in M_\theta$,
hence $\delta \in \chi^+ \cap M_\theta$ so by the definition of $\delta_*$
we have $\delta < \delta_*$; so $\circledast_3$ holds indeed.

Next (but its proof will take awhile)

\begin{enumerate}
    \item[$\circledast_4$]   if $q^0 \in \mathbf Q$ is above $p^*$ 
    then for some triple $(q^1,p,\iota)$ we have:
    
    \begin{enumerate}
        \item[$(a)$]   $q^0 \le^{\pr}_{\mathbf Q} q^1,$
        
        \item[$(b)$]   $\iota < \kappa$,
        
        \item[$(c)$]   $p \in \ap_{\mathbf Q}(r)$ for some $r$
        satisfying $q^0 \le^{\pr}_{\mathbf Q} r \le^{\pr}_{\mathbf Q} q^1$,
        
        \item[$(d)$]   if $\iota = 0$ then $p \le q^1$,
        
        \item[$(e)$]   if $q$ satisfies $q^1 \le^{\pr}_{\mathbf Q} q$
        and $\varphi(x,y) \in \bbL_{\theta,\theta}$ is a formula with parameters from
        $M_\theta$ satisfied by the pair $(\delta_*,q)$ in the model 
        $({\cH}(\lambda^+_*),\in,<^*_{\lambda^+_*})$, \then \, we can find
        $q',q'',\delta$ such that the septuple $\mathbf q = (q,p,\iota,\varphi(x,y),
        q',q'',\delta)$ satisfies the following: 
        
        \item[${{}}$]    $\boxtimes_{\mathbf q} \quad \bullet_1 \quad 
        \delta < \delta_*$ (hence $\delta \in M_\theta$),
        
        \item[${{}}$]   $\quad \quad \bullet_2 \quad 
        ({\cH}(\lambda_*^+),\in,<^*_{\lambda^+_*}) \models \varphi[\delta,q']$, 
        
        \item[${{}}$]   $\quad \quad \bullet_3 \quad$ if $\iota = 0$ then:
        
        \item[${{}}$]  \hskip45pt $(\alpha) \quad 
        q \le^{\pr}_{\mathbf Q} q''$,
        
        \item[${{}}$]  \hskip45pt $(\beta) \quad q' \le^{\pr}_{\mathbf Q} q''$,
        
        \item[${{}}$]  \hskip45pt $(\gamma) \quad q'' 
        \Vdash ``\name{\mathbf c}\{\delta,\delta_*\} = 0"$.
        
        \item[${{}}$]  $\quad \quad \bullet_4 \quad$ if
        $\iota \in (0,\kappa)$, then $q \le^{\pr}_{\mathbf Q} q''$ and
        
        \hskip55pt $q'' \Vdash ``p \in 
        \name{\mathbf G}_{\mathbf Q} \Rightarrow \name{\mathbf c}
        \{\delta,\delta_*\} = \iota \wedge q' \in \name{\mathbf G}_{\mathbf Q}"$.
    \end{enumerate}
\end{enumerate}

Why?  Assume toward contradiction that $\circledast_4$ fails.
We let $\langle S_\varepsilon \colon \varepsilon \le \theta\rangle$ be
a $\subseteq$-increasing continuous sequence of subsets of $\theta$
with $S_\theta = \theta,|S_{\varepsilon +1} \backslash S_\varepsilon|
= \theta,|S_0| = \theta$ and min$(S_{\varepsilon +1} \backslash
S_\varepsilon) \ge \varepsilon$.
Now we try to choose $(q^*_\varepsilon,\mathbf x_\varepsilon,
\varphi_\varepsilon)$ by induction on
$\varepsilon < \theta$ (but $\varphi_\varepsilon$ is chosen in the
$(\varepsilon +1)$-th stage) such that:

\begin{enumerate}
    \item[$\odot_{4.1}$]   $(\alpha) \quad q^*_\varepsilon \in \mathbf Q$
    and $\langle q^*_\zeta \colon \zeta \le \varepsilon\rangle$
    is $\le^{\pr}_{\mathbf Q}$-increasing,
    
    \item[${{}}$]   $(\beta) \quad q^*_0 = q^0$,
    
    \item[${{}}$]  $(\gamma) \quad$ if $\varepsilon$ is a 
    limit ordinal $(< \theta)$ and $\langle q^*_\zeta \colon \zeta < \varepsilon\rangle$
    has an exact $\le^{\pr}_{\mathbf Q}$-upper
    
    \hskip25pt  bound (see part (2)
    of Definition \ref{1c.15}) \then \, $q^*_\varepsilon$ is an exact
    
    \hskip25pt  $\le^{\pr}_{\mathbf Q}$-upper bound of it,
    
    \item[${{}}$]   $(\delta) \quad \mathbf x_\varepsilon = \langle 
    (p^*_\xi,\iota_\xi) \colon \xi \in S_\varepsilon\rangle$ lists $\{(p,\iota) \colon 
    \iota < \kappa$ and 
    
    \hskip25pt $p \in \ap_{\mathbf Q}(q^*_\zeta)$ for some $\zeta$
    such that $\zeta = 0 \vee \zeta < \varepsilon\}$, here we use
    
    \hskip25pt  clause (i) of \ref{1c.15}(1) recalling $q^*_\zeta \in 
    \ap_{\mathbf Q}(q^*_\zeta)$, by clause (d)$(\beta)$ of 
    
    \hskip25pt \ref{1c.15}(1) so 
    $1 \le |\ap_{\mathbf Q}(q^*_\zeta)| < \theta$,
    
    \item[${{}}$]   $(\varepsilon) \quad$ for successor ordinal
    $\varepsilon = \zeta +1$, let $(q^*_{\zeta +1},\varphi_\zeta(x,y))$
    
    \hskip25pt  exemplify that the triple
    $(q^*_\zeta,p^*_\zeta,\iota_\zeta)$ does not satisfy 
    
    \hskip25pt  demand (e) on $(q^1,p,\iota)$ in $\circledast_4$, i.e.:
    
    \begin{enumerate}
        \item[$(*)$]   $q^*_\zeta \le^{\pr}_{\mathbf Q}
        q^*_{\zeta +1}$ and $\varphi_\zeta(x,y) \in 
        \bbL_{\theta,\theta}$ is a formula with 
        parameters from $M_\theta$ which the pair
        $(\delta_*,q^*_{\zeta +1})$ satisfies in 
        $({\cH}(\lambda^+_*),\in,<^*_{\lambda^+_*})$ but we cannot find
        $q',q'',\delta$ such that the septuple $\mathbf q_{\zeta +1} :=
        (q^*_{\zeta +1},p^*_\zeta,\iota_\zeta,\varphi_\varepsilon(x,y), 
        q',q'',\delta)$ satisfies $\boxtimes_{\mathbf q_{\zeta +1}}$.
    \end{enumerate}
\end{enumerate}

We show that the induction can be carried out.
Assume we are stuck at $\varepsilon$.  Now if $\varepsilon=0$ we
can satisfy clauses $(\alpha) + (\beta)$ and recalling
$1 \le |\ap_{\mathbf Q}(q^0)| < \theta$ we can choose $\mathbf x_0$ to
satisfy clause $(\delta)$ and since $(\gamma),(\varepsilon)$ are
vacuous we are done.

Suppose $\varepsilon >0$.  For limit
$\varepsilon$ we can choose $q^*_\varepsilon$ as required in clause
$(\alpha)$ by clause (e) of Definition \ref{1c.15}(1); also clause
$(\gamma)$ is relevant but causes no problem; and lastly, we can
choose $\mathbf x_\varepsilon$ and since clause $(\varepsilon)$ is vacuous
for limit ordinals, we are done again.  So $\varepsilon$ is a
successor, let $\varepsilon = \zeta +1$, so $q^*_\zeta$ was defined.
Now if we cannot choose $(q^*_{\zeta +1},\varphi_\zeta(x,y)) =
(q^*_\varepsilon,\varphi_\zeta(x,y))$ then the triple
$(q^*_\zeta,p^*_\zeta,\iota_\zeta)$ is as required from the 
triple $(q^1,p,\iota)$ in $\circledast_4$.  But this is impossible (by
our assumption toward contradiction), so we can find $(q^*_{\zeta
+1},\varphi_\zeta(x,y))$ as required; and again we can choose
$\mathbf x_\varepsilon$ as for $\varepsilon = 0$.

So it is enough to get a contradiction from the assumption that we can
carry out the induction.  But by clause (g) of Definition \ref{1c.15}(1) the
set $S := \{\zeta < \theta:\zeta$ is a limit ordinal and the sequence
$\langle q^*_\varepsilon \colon \varepsilon < \zeta\rangle$ has an exact
$\le^{\pr}_{\mathbf Q}$-upper bound$\}$ is
stationary.  As $S$ is stationary noting $\odot_{4.1}(\delta)$ 
and recalling clause (i) of Definition \ref{1c.15}(1) which gives
$|\ap_{\mathbf Q}(q^*_\varepsilon)| < \theta = \text{ cf}(\theta)$ for
$\varepsilon < \theta$, clearly for some limit ordinal 
$\zeta(*) \in S$ we have: if
$\iota < \kappa \, (< \theta)$ and $p \in 
\cup\{\ap_{\mathbf Q}(q^*_\varepsilon) \colon \varepsilon <
\zeta(*)\}$ then for unboundedly many $\varepsilon < \zeta(*)$ we have
$(p^*_\varepsilon,\iota_\varepsilon) = (p,\iota)$.

Let $\varphi(x,y) \in \bbL_{\theta,\theta}$ express all 
the properties that the pair
$(\delta_*,q^*_{\zeta(*)})$ satisfies and are used below, i.e., $(\exists
y_0,\dotsc,y_{\zeta(*)})[x \in \chi^+ \wedge y = y_{\zeta(*)} \wedge
\bigwedge\limits_{\varepsilon < \zeta \le \zeta(*)} y_\varepsilon
\le^{\pr}_{\mathbf Q} y_\zeta \wedge \bigwedge\limits_{\varepsilon
< \zeta(*)} \varphi_\varepsilon(x,y_{\varepsilon +1}) \wedge
(y_{\zeta(*)}$ is an exact $\le^{\pr}_{\mathbf Q}$-upper bound of
$\langle y_i \colon i < \zeta(*)\rangle )]$.

So

\begin{enumerate}
\item[$(*)$]  $({\cH}(\lambda^+_*),\in,<^*_{\lambda^+_*})
\models \varphi[\delta_*,q^*_{\zeta(*)}]$.
\end{enumerate}
By $\circledast_3$ we can find a pair $(\delta,q')$ such that:

\begin{enumerate}
    \item[$\odot_{4.2}$]   $(a) \quad \delta < \delta_*$ hence $\delta 
    \in M_\theta$ and $q' \in M_\theta$,
    
    \item[${{}}$]   $(b) \quad ({\cH}(\lambda^+_*),\in,<^*_{\lambda^+_*}) \models
    \varphi[\delta,q']$,
    
    \item[${{}}$]  $(c) \quad q',q^*_{\zeta(*)}$ are
    $\le^{\pr}_{\mathbf Q}$-compatible.
\end{enumerate}

Let $q''$ be such that

\begin{enumerate}
\item[${{}}$]   $(d) \quad q' \le^{\pr}_{\mathbf Q} q''$ and
$q^*_{\zeta(*)} \le^{\pr}_{\mathbf Q} q''$.
\end{enumerate}

Let $\langle q'_\zeta \colon \zeta \le \zeta(*)\rangle$ exemplify
$\varphi[\delta,q']$ and \wilog \, $\{q'_\zeta \colon \zeta \le
\zeta(*)\} \subseteq M_\theta$, in particular, $\varepsilon \le
\zeta(*) \Rightarrow q'_\varepsilon \le^{\pr}_{\mathbf Q} 
q'_{\zeta(*)} = q' \le^{\pr}_{\mathbf
Q} q''$ and, of course, $\varepsilon \le \zeta(*) \Rightarrow
q^*_\varepsilon \le^{\pr}_{\mathbf
Q} q^*_{\zeta(*)} \le^{\pr}_{\mathbf Q} q''$.

\underline{Case 1}:  $q'' \Vdash_{\mathbf Q} ``\name{\mathbf
c}\{\delta,\delta_*\} = 0"$.

There is $\varepsilon < \zeta(*)$ such that $\iota_\varepsilon = 0$.
We get contradiction to the choice of the $(q^*_{\varepsilon
+1},\varphi_\varepsilon)$.

Why?  Let us check that the septuple
$\mathbf q = (q^*_{\varepsilon +1},q^*_{\varepsilon
+1},0,\varphi_\varepsilon(x,y),q'_{\varepsilon +1},q'',\delta)$ is
such that $\boxtimes_{\mathbf q}$ holds.

\underline{For $\bullet_1$}:  Recall $\odot_{4.2}(a)$

\underline{For $\bullet_2$}:  By $\odot_{4.1}(\varepsilon)(*)$ we have
$(\cH(\lambda^+_*),\in,<^*_{\lambda^+_*}) \models
\varphi_\varepsilon(\delta_*,q^*_{\varepsilon +1})$ by the choice of
$\varphi(x,y)$ and of $\langle q'_\zeta \colon \zeta \le \zeta(*)\rangle$ we
have $(\cH(\lambda^+_*),\in,<^*_{\lambda^+_*}) \models
\varphi_\varepsilon[\delta,q'_{\varepsilon +1}]$ as required.

\underline{For $\bullet_3(\alpha)$}:  it means $q^*_{\varepsilon +1}
\le^{\pr}_{\mathbf Q} q''$ which holds as $q^*_{\varepsilon +1}
\le^{\pr}_{\mathbf Q} q^*_{\zeta(*)}$ by $\odot_{4.1}(\alpha)$ and
$q^*_{\zeta(*)} \le^{\pr}_{\mathbf Q} q''$ by $\odot_{4.2}(d)$.

\underline{For $\bullet_3(\beta)$}:   it means $q'_{\varepsilon +1}
\le^{\pr}_{\mathbf Q} q''$ which has been proved just before
``Case 1".

\underline{For $\bullet_3(\gamma)$}:  it means $q'' \Vdash
``\name{\mathbf c}\{\delta,\delta_*\}=0"$ which holds by the case
assumption

\underline{For $\bullet_4$}:  it is vaccuous.

So indeed $\boxtimes_{\mathbf q}$ holds contradicting the choice of
$(q^*_{\varepsilon +1},\varphi_\varepsilon)$, see $\odot_{4.1}(\varepsilon)$.

\underline{Case 2}:  Not Case 1.

Choose $(q^+,\iota)$ such that $q^+ \in \mathbf Q,q^*_{\zeta(*)}
\le_{\mathbf Q} q'' \le_{\mathbf Q} q^+$
and $q^+ \Vdash_{\mathbf Q} ``\name{\mathbf c}\{\delta,\delta_*\} =
\iota"$ where $\iota \in (0,\kappa)$, we use ``not Case 1".  By clause
(j) of $\circledast$ of Definition \ref{1c.15} applied with
$(q^*_{\zeta(*)},q^+)$ here standing for $(q_*,r)$ there, we
can find a pair $(s,p)$ such that

\begin{enumerate}
    \item[$\odot_{4.3}$]   $(a) \quad p \in \ap_{\mathbf Q}(q^*_{\zeta(*)})$,
    
    \item[${{}}$]   $(b) \quad q^*_{\zeta(*)} \le^{\pr}_{\mathbf Q} s$,
    
    \item[${{}}$]   $(c) \quad s \Vdash_{\mathbf Q} ``p \in 
    \name{\mathbf G}_{\mathbb Q} \Rightarrow q^+ \in \name{\mathbf G}_{\mathbb Q}"$.
\end{enumerate}

As $q^*_{\zeta(*)}$ is an exact $\le^{\pr}_{\mathbf Q}$-upper
bound of $\langle q^*_\varepsilon \colon \varepsilon <
\zeta(*)\rangle$ because $\zeta(*) \in S$ 
and $p \in \ap_{\mathbf Q}(q^*_{\zeta(*)})$, see
part (2) of Definition \ref{1c.15}, there is a pair
$(p',\varepsilon(*))$ such that:

\begin{enumerate}
\item[$\odot_{4.4}$]   $(a) \quad \varepsilon(*) < \zeta(*)$,

\item[${{}}$]   $(b) \quad p' \in \ap_{\mathbf Q}(q^*_{\varepsilon(*)})$,

\item[${{}}$]  $(c) \quad \Vdash_{\mathbf Q} ``\text{if }
q^*_{\zeta(*)},p' \in \name{\mathbf G}_{\mathbf Q}$ then 
$p \in \name{\mathbf G}_{\mathbf Q}"$.
\end{enumerate}

So by the choice of $\zeta(*)$ for some $\zeta < \zeta(*)$ which is $>
 \varepsilon(*)$ we have $(p^*_\zeta,\iota_\zeta) = (p',\iota)$.  
Let $\mathbf q = (q^*_{\zeta +1},p^*_\zeta,\iota_\zeta,
\varphi_\zeta(x,y),q'_\zeta,s,\delta)$.  This
septuple satisfies $\boxtimes_{\mathbf q}$ because:

\underline{For $\bullet_1$}:  Recall $\odot_{4.2}(a)$.

\underline{For $\bullet_2$}:  as in case 1.

\underline{For $\bullet_3$}:  it is vaccuous.

\underline{For $\bullet_4$}:   it means first $q^*_{\zeta +1}
\le^{\pr}_{\mathbf Q} s$ which holds as $q^*_{\zeta +1}
\le^{\pr}_{\mathbf Q} q^*_{\zeta(*)}$ by $\odot_{4.1}(\alpha)$ and
$q^*_{\zeta(*)} \le^{\pr}_{\mathbf Q} s$ by $\odot_{4.3}(b)$.
Second, $s \Vdash ``p^*_\zeta \in \name{\mathbf G}_{\mathbf Q} \Rightarrow
\name{\mathbf c}\{\delta,\delta_*\} = \iota"$ which holds as $p^*_\zeta = p'$
and assuming $\mathbf G \subseteq \bbQ$ is generic over $\mathbf V$ if
$s,p' \in \mathbf G$ then by $\odot_{4.3}(b)$ also $q^*_{\zeta(*)} \in
\mathbf G$ hence by $\odot_{4.4}(c)$ also $p \in \mathbf G$ hence by
$\odot_{4.3}(c)$ also $q^+ \in \mathbf G$ hence by the choice of $q^+$
in the beginning of the case we have $\mathbf V[\mathbf G]$ satisfies 
$\name{\mathbf c}[\mathbf G]\{\delta,\delta_*\} = \iota$.

Third, $s \Vdash ``p^*_\zeta \in \name{\mathbf G}_{\mathbf Q} \Rightarrow
q'_\zeta \in \name{\mathbf G}_{\mathbf Q}"$ which holds as $p^*_\zeta = p'$
and assuming $\mathbf G \subseteq \bbQ$ is generic over $\mathbf V$, if
$s,p' \in \mathbf G$ then as above $q^+ \in \mathbf G$ hence by 
the choice of $q^+$ in the beginning of the case 
also $q'' \in \mathbf G$ hence by
$\odot_{4.2}(d)$ also $q' \in \mathbf G$ hence by the choice of $\varphi$
and of $\langle q'_\zeta \colon \zeta \le \zeta(*)\rangle$ we have $q'_\zeta
\in \mathbf G$ as required.

Hence we get a contradiction to
the choice of $(q^*_{\zeta +1},\varphi_3)$.  
So we are done proving $\circledast_4$.

Let the triple $(q_*,p_*,\iota_*)$ satisfy the demands on
$(q^1,p,\iota)$ in $\circledast_4$ for $q^0 = p^*$ and let $r_*$ be as
guaranteed by clause (c) of $\circledast_4$ so

\begin{enumerate}
\item[$\odot$]  $p_* \le^{\pr}_{\mathbf Q} r_*
\le^{\pr}_{\mathbf Q} q_*$ and $p \in \ap_{\mathbf Q}(r_*)$.
\end{enumerate}

Now we choose
$q_\zeta,q'_\zeta,q''_\zeta,q'''_\zeta,r_\zeta,p_\zeta,
\alpha_\zeta,\beta_\zeta$ by induction on $\zeta < \theta$ such that:

\begin{enumerate}
\item[$\circledast_5$]  $(a) \quad q_\zeta \in \mathbf Q$,

\item[${{}}$]  $(b) \quad \langle q_\xi:\xi \le \zeta\rangle$ is
$\le^{\pr}_{\mathbf Q}$-increasing,
 
\item[${{}}$]  $(c) \quad q_0 = q_*$, 

\item[${{}}$]   $(d) \quad \alpha_\zeta < \beta_\zeta < \delta_*$
and $\varepsilon < \zeta \Rightarrow \beta_\varepsilon < \alpha_\zeta$,

\item[${{}}$]   $(e) \quad (q'_\zeta,q''_\zeta,\alpha_\zeta)$ 
is as $(q',q'',\delta)$ is guaranteed to be in clause (e) of
$\circledast_4$ 

\hskip25pt with $q_\zeta$ here standing for $q$ there (and of course
$p_*,\iota_*$ here stands

\hskip25pt for $p,\iota$ there) and a suitable $\varphi$, hence,

\begin{enumerate}
\item[${{}}$]   $(\alpha) \quad \alpha_\xi,\beta_\xi <
\alpha_\zeta < \delta_*$ for $\xi < \zeta$,

\item[${{}}$]   $(\beta) \quad q_\zeta \le^{\pr}_{\mathbf Q} q''_\zeta$,

\item[${{}}$]   $(\gamma) \quad$ the pair $(\alpha_\zeta, q'_\zeta) 
\in M_\theta$ is similar enough to $(\delta_*,q_\zeta)$,

\item[${{}}$]   $(\delta) \quad$ if $\iota_* > 0$ then
$q''_\zeta \Vdash$ ``if $p_* \in
\name G_{\mathbf Q}$ then $\name{\mathbf c}\{\alpha_\zeta,\delta_*\} = \iota_*$
and $q'_\zeta \in \name{\mathbf G}_{\mathbf Q}"$,

\item[${{}}$]  $(\varepsilon) \quad$ if $\iota_* =0$ then $q'_\zeta
\le^{\pr}_{\mathbf Q} q''$ and $q''_\zeta \models ``\name{\mathbf
c}\{\alpha_\zeta,\delta_*\} = \iota_*$ and $q''_\zeta \models
``\name{\mathbf c}\{\alpha_\varepsilon,\alpha_\zeta\} = \iota_*"$ for
$\varepsilon < \zeta$.

\end{enumerate}
\item[${{}}$]    $(f) \quad$ the quadruple
$(\beta_\zeta,r_\zeta,p_\zeta,q'''_\zeta) \in M_\theta$ is similar enough to
the 

\hskip25pt quadruple $(\delta_*,r,p_*,q''_\zeta)$, i.e.:

\begin{enumerate}
    \item[${{}}$]   $(\alpha) \quad
    \beta_\zeta \in (\alpha_\zeta,\delta_*)$,
    
    \item[${{}}$]  $(\beta) \quad$ the pair $(q'''_\zeta,q''_\zeta)$
    is $\le^{\pr}_{\mathbf Q}$-compatible,
    
    \item[${{}}$]   $(\gamma) \quad p_\zeta \in \ap_{\mathbf Q}
    (r_\zeta)$ and $r_\zeta \le^{\pr}_{\mathbf Q} q'''_\zeta$, 
    
    \item[${{}}$]    $(\delta) \quad q'''_\zeta \Vdash_{\mathbf Q}$ ``if 
    $p_\zeta \in \name{\mathbf G}_{\mathbf Q}$ then
    $\name{\mathbf c}\{\alpha_\varepsilon,\beta_\zeta\} = 
    \iota_*$ for $\varepsilon \le \zeta$".
\end{enumerate}

\item[${{}}$]   $(g) \quad q''_\zeta \le^{\pr}_{\mathbf Q}
q_{\zeta +1}$ and $q'''_\zeta \le^{\pr}_{\mathbf Q} q_{\zeta +1}$.
\end{enumerate}

[Why can we carry out the induction?  Note that
$q'_\zeta,\dotsc,\beta_\zeta$ are chosen in the $(\zeta +1)$-th step.

For $\zeta=0$ just let $q_0 = q_*$ so the only relevant clauses
(a),(c) are satisfied.

For $\zeta$ limit only clause (b) is relevant and we can choose
$q_\zeta$ by clause (e) of Definition \ref{1c.15}.

We are left with $\zeta$ successor, let $\zeta = \xi +1$.  

We first choose $(q'_\xi,q''_\xi,\alpha_\xi)$ as required in clause (e) of
$\circledast_5$ using appropriate $\varphi$ and 
$\circledast_4(e)$ for our $(q_*,p_*,\iota_*)$.  
Clearly in $\circledast_5$ clause (e) holds as well as the second
statement in clause (d).  In particular, $(e)(\delta)$ comes from
$\circledast_4(e)$, and $(e)(\varepsilon)$ comes from $\varphi$,
i.e. as $\varepsilon < \zeta \Rightarrow q_\varepsilon
\le^{\pr}_{\mathbf Q} q_\zeta$.

Second, we choose $(\beta_\xi,r_\xi,
p_\xi,q'''_\xi)$ as required in clause (f) of
$\circledast_5$.  [Why?  We can find $(\beta_\xi,r_\xi,p_\xi,q'''_\xi)
\in M_\theta$ similar enough to $(\delta_*,r,p_*,q''_\xi)$, using
$(*)_3$ with $(\delta_*,q''_\zeta)$ here standing for $(\delta_*,q)$
there and $q'''_\zeta$ here standing for $q'$ in the conclusion of
$\circledast_3$ (and $r_\xi,p_\xi$ are gotten by existential
quantifiers in choosing $\varphi$ which holds as $r_*,p_*$ witness). 

First, note that $\alpha_\zeta < \delta_*$ holds as 
$\alpha_\zeta \in M_\theta$ hence $\alpha$ and $\beta_\xi < \delta_*$ but
$\beta_\xi \in M_\theta$ so $\beta_\xi < \delta_*$ so clause
$(f)(\alpha)$ holds.  Second, $q'''_\zeta,q''_\zeta$ are
$\le^{\pr}_{\mathbf Q}$-compatible by $\circledast_3(c)$ hence clause
$(f)(\beta)$ holdsa.

Third, the parallel of $(f)(\gamma)$ holds for $(p_*,r_*)$ by the
choice of $r_*$ and as $q_* = q_0 \le^{\pr}_{\mathbf Q} q_\zeta
\le^{\pr}_{\mathbf Q} q''_\zeta$.

Fourth, the parallel of $(f)(\delta)$ holds for $(q''_\zeta,p_*)$ by
$(e)(\delta)$.  

Third, as $q''_\xi,q'''_\xi$ are $\le^{\pr}_{\mathbf
Q}$-compatible there is $q_\zeta = q_{\xi +1}$ as required in clause (g).

So we can satisfy $\circledast_5$.

Now we apply clause (h) of Definition \ref{1c.15}(1) to the sequence
$\langle(q_\varepsilon,p_\varepsilon) \colon \varepsilon < \theta\rangle$
hence there is $\zeta < \theta$ as there, so as $p_\varepsilon \in
\ap_{\mathbf Q}(q_\varepsilon)$ the conditions
$p_\varepsilon,q_\varepsilon$ are compatible in $\bbQ$ hence they have
a common upper bound $r \in \mathbf Q$ hence by the choice of $\langle
(p_\varepsilon,q_\varepsilon) \colon \varepsilon < \theta\rangle$ above,
 $r \Vdash_{\mathbf Q} ``\xi_* \le \text{ otp}\{\varepsilon <
\zeta \colon q_\varepsilon,p_\varepsilon \in \name{\mathbf G}_{\mathbf Q}\}"$.

So $r \Vdash_{\mathbf Q}$ ``the sequence
$\langle(\alpha_\varepsilon,\beta_\varepsilon) \colon \varepsilon < \zeta$
and $q_\varepsilon,p_\varepsilon \in 
\name{\mathbf G}_{\mathbf Q}\rangle$ is as required" noting that:

\begin{enumerate}
    \item[$\bullet$]  if $\iota_* \ge 0$, then $q_{\zeta +1} \models
    ``\name{\mathbf c}\{\alpha_\varepsilon,\beta_\zeta\}" = \iota_*$ for
    $\varepsilon \le \zeta$,
    
    \item[$\bullet$]  if $\iota_* = 0$, then $q_{\zeta +1} \models
    ``\name{\mathbf c}\{\alpha_\varepsilon,\alpha_\zeta\} = \iota_*$ for
    $\varepsilon \le \zeta$.
\end{enumerate}

So we are done.  
\end{PROOF}

%%%%%%%%%%%%%%%%%%%%%%%%%%%%%%%%%%%%%%%%%%%%%%%%%%%%%%%%
% Section 2
\newpage

\section{Many strong polarized partition relations}\label{2}  

We can below say more on strongly inaccessible $\theta \in \Theta$.

\begin{hypothesis}\label{2B.48}  
    Let $\mathbf p = (\lambda,\mu,\Theta,\bar\partial)$ satisfy:
    
    \begin{enumerate}
        \item[$(a)$]   $\lambda =\lambda^{<\lambda} < \mu = \mu^{< \mu},$
        
        \item[$(b)$]    $\Theta \subseteq [\lambda,\mu]$ is a set of regular
        cardinals with $\lambda,\mu \in \Theta,$
        
        \item[$(c)$]    $\bar\partial = \langle \partial_\theta \colon \theta \in
        \Theta\rangle$ is an increasing sequence of cardinals such that: 
        
        \begin{enumerate}
            \item[$(\alpha)$]   $\partial_\theta = \text{ cf}(\partial_\theta),$
            
            \item[$(\beta)$]    $\partial_\theta = (\partial_\theta)^{< \partial_\theta},$
            
            \item[$(\gamma)$]   $\partial_\theta \le \theta$ and if $\theta < \kappa$ are from $\Theta$ then $\partial_\theta < \partial_\kappa,$
            
            \item[$(\delta)$]  $\partial_\theta \ge \kappa$ if $\kappa \in (\Theta \cap \theta),$
            
            \item[$(\varepsilon)$]   if $\theta = \lambda$ then
            $\partial_\theta = \lambda$.
        \end{enumerate}

        \item[$(d)$] notation: let $\theta, \Upsilon, \kappa$ vary on $\Theta.$ 
    \end{enumerate}
\end{hypothesis}

The reader may concentrate on (see \ref{3c.87}):

\begin{example}\label{2B.49}  
    Assume
    
    \begin{enumerate}
        \item[$(a)$]   $\mathbf V$ satisfies G.C.H. from $\lambda$ to $\mu$,
        i.e., $\partial \in [\lambda,\mu) \Rightarrow 2^\partial = \partial^+,$
        
        \item[$(b)$]   $\lambda = \lambda^{< \lambda} < \mu = \mu^{< \mu},$
        
        \item[$(c)$]    $\Theta := \{\theta^+:\lambda \le \theta < \mu\}    \cup\{\lambda,\mu\}$ and,
        
        \item[$(d)$]   $\partial_\theta = \theta$ for every $\theta \in        \Theta$, so in \ref{2B.85}(5)        below in this example we have $\partial^\theta = \min\{\theta^+,\mu\}$.
    \end{enumerate}
\end{example}

For the rest of this section $\mathbf p$, i.e. $\lambda,\mu,\Theta,\bar\partial$ are fixed.

\begin{example}\label{2B.52}
    As above \blueq{by} $$\Theta \coloneqq \{ \kappa \colon \kappa = \lambda \text{ or } \kappa = \mu \text{ or } \kappa = \lambda^{\omega(1 + \alpha) + 1} < \mu \text{ for some } n < \} $$
\end{example}

\begin{definition}\label{2B.85}  
    1) For $\kappa \in \Theta$, 
    let $E_\kappa$ be the equivalence relation on $\mu$ defined by
    
    \begin{enumerate}
        \item[$(*)$]    $i E_\kappa j$ iff $i + \kappa = j + \kappa$.
    \end{enumerate}
    
    2) For any cardinal $\kappa \in [\lambda,\mu]$ 
    define $E_{<\kappa}$ as {\rm Eq}$_\lambda \cup 
    \bigcup \{E_\theta \colon \theta \in \Theta \cap
    \kappa\}$.     For such $\kappa$, if $\kappa \notin \Theta$, let $E_\kappa = E_{<\kappa}$.

    3)  For $i < \mu$ and $\kappa \in \Theta$ 
    let $[i]_\kappa = i/E_\kappa =$ the $E_\kappa$-equivalence 
    class of $i$, and for 
    $A \subseteq \mu$, let $A/E_\kappa = \{i/E_\kappa \colon i \in A\}$.  For 
    $i < \mu,A \subseteq \mu$ we say that $i/E_\kappa$ is 
    \underline{represented in $A$} \underline{iff} 
    $A \cap (i/E_\kappa) \ne \emptyset$.  If $A \subseteq B \subseteq \mu$,
    we say that $i/E_\kappa$ \underline{grows from} $A$ to $B$
    \underline{iff} $\emptyset \ne A 
    \cap (i/E_\kappa) \ne B \cap (i/E_\kappa)$.  If
    we write functions $p,q$ instead of $A,B$ we mean $\dom(p)$, $\dom(q)$
    respectively.

    4) Note that for all $i,j < \mu$ we have $i E_\mu j$.  
    Thus, the following definition makes sense: \underline{if} $i,j$ are 
    $< \mu$ we let $\kappa(i,j)$ be the minimal $\kappa
    \in \Theta$ such that $i E_\kappa j$.

    5) Suppose $\kappa \in \Theta$, let
    \[
    \partial^\kappa = \min\{\partial_\theta \colon \kappa < \theta 
    \in \Theta\} \text{ if } \kappa < \mu \text{ and } \partial^\kappa =
    \mu \text{ if } \kappa = \mu.
    \]
    
    (Notice that $\kappa$ is just an index in $\partial^\kappa$, and this
    is not cardinal exponentiation.)
\end{definition}

Thus, in particular,

\begin{observation}\label{2B.12}  
    1) For $i,j < \mu$ we have: $\kappa(i,j)$
    is well defined and for $i,j < \mu,\theta \in [\lambda,\mu)$ 
    we have $i E_\theta j \Leftrightarrow \theta \ge \kappa(i,j)$ as: 
    
    \begin{enumerate}
        \item[$(*)$]   if $\theta < \kappa$ are both from $\Theta$, \then \,
        $E_\theta$ refines $E_\kappa$ and, in fact, each $E_\kappa$-equivalence 
        class is
        the union of $\kappa$ many $E_\theta$-equivalence classes.
    \end{enumerate}
    
    2a) If $\kappa < \theta$ are from $\Theta$ 
    then $\partial^\kappa \le \partial_\theta$; used in \ref{2B.24}(1).
    
    2b) $\partial_\theta < \partial^\theta$
     except possibly for $\theta = \mu$ (still
    $\partial_\mu \le \mu = \partial^\mu$); recall
     \ref{2B.48}(c)$(\gamma)$.
    
    2c) {\rm sup}$(\Theta \cap \kappa) \le \partial_\kappa$
    for $\kappa \in \Theta$; recall \ref{2B.48}$(c)(\delta)$
    
    2d) $\partial^\theta = (\partial^\theta)^{< \partial^\theta}$ 
    for $\theta \in \Theta;$ recall \ref{2B.48}(c)$(\beta),$ 
    
    2e)  If $\kappa \in \Theta$ \then \,
    each $E_{< \kappa}$-equivalence class has
    cardinality $\le \partial_\kappa$ (by (2c)); used in the proof 
    of \ref{2B.24}(3)).
    
    3a)  $\partial_\lambda = \lambda$.
    
    3b)  If $\theta < \kappa$ are successive elements of    $\Theta$ \then \, $\partial^\theta = \partial_\kappa$.
    
    3c)  If $\kappa \in \Theta$ and 
    $\bigcup (\Theta \cap \kappa)$ is a singular cardinal, \then \,
    $\partial_\kappa \ge (\bigcup (\Theta \cap \kappa))^+$.
\end{observation}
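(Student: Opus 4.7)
The plan is to dispatch each of the clauses by direct inspection of Definition~\ref{2B.85} and Hypothesis~\ref{2B.48}; there is no single hard step, only bookkeeping. I begin with the subsidiary claim $(*)$ of clause~(1): if $\theta<\kappa$ are in $\Theta$ and $i+\theta=j+\theta$, then $i+\kappa=(i+\theta)+\kappa=(j+\theta)+\kappa=j+\kappa$, since $\theta+\kappa=\kappa$ for $\theta<\kappa$ infinite cardinals; hence $E_\theta\subseteq E_\kappa$. For the counting statement, an $E_\kappa$-class is an ordinal interval of length $\kappa=\theta\cdot\kappa$, hence a concatenation of exactly $\kappa$ intervals of length $\theta$. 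Well-definedness of $\kappa(i,j)$ is then immediate, because $\mu\in\Theta$ and $i+\mu=\mu=j+\mu$ for $i,j<\mu$, so the set $\{\theta\in\Theta: iE_\theta j\}$ is non-empty and has a minimum. For the biconditional $iE_\theta j\Leftrightarrow\theta\ge\kappa(i,j)$, I would treat separately the cases $\theta\in\Theta$ and $\theta\notin\Theta$ (using $E_\theta=E_{<\theta}$ in the latter), each time using $(*)$ for ``$\Leftarrow$'' and minimality of $\kappa(i,j)$ for ``$\Rightarrow$''.

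The subclauses of (2) are then essentially one-liners. For (2a), $\theta$ itself belongs to the index set defining $\partial^\kappa$. For (2b), Hypothesis~\ref{2B.48}(c)$(\gamma)$ gives strict monotonicity of $\bar\partial$ on $\Theta$, so for $\theta<\mu$ any $\theta'\in\Theta$ above $\theta$ yields $\partial_{\theta'}>\partial_\theta$, while for $\theta=\mu$ the permitted equality is exactly $\partial_\mu\le\mu=\partial^\mu$. Part (2c) just restates Hypothesis~\ref{2B.48}(c)$(\delta)$. For (2d), either $\partial^\theta=\partial_{\theta'}$ for some $\theta'\in\Theta$, so apply Hypothesis~\ref{2B.48}(c)$(\beta)$; or $\partial^\theta=\mu$, and then $\mu^{<\mu}=\mu$ by Hypothesis~\ref{2B.48}(a). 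Part (2e) requires one extra observation: by $(*)$ the relations $E_{\theta'}$ for $\theta'\in\Theta\cap\kappa$ are nested, so the $E_{<\kappa}$-class of $i$ is the increasing union $\bigcup\{[i]_{\theta'}:\theta'\in\Theta\cap\kappa\}$, an interval of length at most $\sup(\Theta\cap\kappa)$, which is $\le\partial_\kappa$ by (2c).

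Clause~(3a) is immediate from Hypothesis~\ref{2B.48}(c)$(\varepsilon)$. Clause~(3b) follows because the successor $\kappa$ of $\theta$ in $\Theta$ is the minimum of $\{\theta'\in\Theta:\theta'>\theta\}$, and monotonicity places the minimum of $\partial_{\theta'}$ over this set at $\theta'=\kappa$. The only slightly delicate point---and the closest thing to a genuine obstacle---is (3c): setting $\sigma=\sup(\Theta\cap\kappa)$, part (2c) only gives the weak inequality $\partial_\kappa\ge\sigma$; to upgrade this to $\partial_\kappa\ge\sigma^+$ one must invoke the regularity of $\partial_\kappa$ from Hypothesis~\ref{2B.48}(c)$(\alpha)$ and play it against the singularity of $\sigma$ assumed in the statement, ruling out equality. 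This is the one place where the hypothesis is used non-trivially in combination, and where one could be tempted to miss a citation; all the remaining clauses are mechanical translations.
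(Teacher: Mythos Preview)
Your proof is correct and matches the paper's approach: the paper gives no separate proof for this observation, only the parenthetical hints embedded in the statement (``recall \ref{2B.48}(c)$(\gamma)$'', ``by (2c)'', etc.), and you have faithfully expanded those hints into a complete argument. The one place where you add genuine content beyond the hints is (3c), and your observation there---that regularity of $\partial_\kappa$ from \ref{2B.48}(c)$(\alpha)$ rules out $\partial_\kappa=\sigma$ when $\sigma$ is singular---is exactly the intended mechanism.
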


\begin{definition}\label{2B.16}  
    1) The forcing notion $\bbQ \coloneqq \bbQ_{\mathbf p} = (\bbQ, \leq_{\bbQ}) = (Q_{\mathbf p},\le_{\bbQ_{\mathbf p}})$, but we may omit $\mathbf p$ when clear 
    from the context, is defined by:
    
    \begin{enumerate}
        \item[$(A)$]   $q \in Q$ \underline{iff}
        \begin{enumerate}
        
        \item[$(a)$]     $q$ is a (partial) function from $\mu$ to $\{0,1\}$,
        
        \item[$(b)$]  if $i < \mu$ and $\kappa \in \Theta$, then the
        cardinality of $(i/E_\kappa) \cap \dom(q)$ is $< \partial_\kappa$ 
        (note: taking $\kappa = \mu$, the cardinality of $\dom(q)$ is 
        $< \partial_\mu \le \mu$),
    \end{enumerate}
    
    \item[$(B)$] $p \leq q$ or $p \le_{\bbQ} q$ \underline{iff}
    \begin{enumerate}
    \item[$(a)$]   $p \subseteq q$, i.e. $\dom(p) \subseteq 
    \dom(q)$ and $\alpha \in \dom(p) \Rightarrow  p(\alpha) =
    q(\alpha)$ 
    
    \item[$(b)$]   for every $\theta \in \Theta$ the set $\{A \in \mu
    /E_\theta \colon A$ grows from $p$ to $q\}$ has cardinality $< \partial_\theta$.
    \end{enumerate}
    \end{enumerate}
    
    2) For $\kappa \in \Theta \backslash \{\mu\}$ and $p,q \in Q$, let:
    
    \begin{enumerate}
        \item[$(A)$]   $p \le^{\pr}_{\mathbf p,\kappa} q$ \text{or} 
        $p \le^{\pr}_\kappa q$ \underline{iff}:
        
        \begin{enumerate}
            \item[$(a)$]    $p \le q$ and,
            
            \item[$(b)$]    no $E_\kappa$-equivalence class grows from $p$ to $q$.
        \end{enumerate}
        
        \item[$(B)$]   $p \le^{\ap}_{\mathbf p,\kappa} q$ or
        $p \le^{\ap}_\kappa q$ \underline{iff}:
        
        \begin{enumerate}
        \item[$(a)$]    $p \le q$,
        
        \item[$(b)$]   $\dom(q)/E_\kappa = \dom(p)/E_\kappa$.
        \end{enumerate}
    \end{enumerate}
    
    3) For $\kappa = \mu$ and $p,q \in Q$, let:
    
    \begin{enumerate}
        \item[$(A)$]   $p \le^{\pr}_\mu q$ \text{iff} $p = q.$
        
        \item[$(B)$]   $p \le^{\ap}_\mu q$ iff $p \le q$.
    \end{enumerate}
    
    4) Let $\mathbf Q_\kappa = \mathbf Q_{\mathbf p,\kappa} = (Q,\le_{\bbQ},
    \le^{\pr}_\kappa,\ap_\kappa)$ where
    {\rm ap}$_\kappa = \text{\rm ap}_{\mathbf p,\kappa}$ is the function
    with domain $Q$ such that {\rm ap}$_\kappa(q) = \{q':q
    \le^{\ap}_\kappa q'\}$; so $\mathbf Q_\kappa$ as a forcing notion is
    $\bbQ$.
    
    5) Let $\le^{\text{us}}_{\mathbf p,\kappa} = \le^{\text{us}}_\kappa =
    \le_{\mathbf p}$ be $\le_{\bbQ_{\mathbf p}}$ for $\kappa \in \Theta$.
\end{definition}

\begin{remark}
    Clearly $\mathbf Q_\kappa$ is related to \S1, and if
    $\kappa$ is the last member of $\Theta \cap \mu$ we can use it (enough
    if $\Theta = \{\lambda,\mu\}$, \underline{but} not in general,
    so we shall use a variant).
\end{remark}

\begin{claim}\label{2B.20}  
Concerning Definition \ref{2B.16}

\begin{enumerate}
\item[$(a)$]   $(\alpha) \quad$ if $\kappa \in \Theta$, \then \,
$\le,\le^{\text{\rm pr}}_\kappa,
\le^{\text{\rm ap}}_\kappa$ are partial orderings of $Q$,

\item[${{}}$]   $(\beta) \quad p \le^{\text{\rm pr}}_\kappa q
\Rightarrow p \le q$ and $p \le^{\text{\rm ap}}_\kappa q \Rightarrow p \le q$,

\item[${{}}$]   $(\gamma) \quad$ if $\kappa = \mu$ \then \,
$\le^{\text{\rm ap}}_\kappa = \le$,

\item[${{}}$]   $(\delta) \quad$ if $\kappa = \mu$ \then \,
$\le^{\text{\rm pr}}_\kappa$ is the equality.

\item[$(b)$]   $(\alpha) \quad$ if $p_1,p_2 \in Q$ and they 
are compatible as functions, \then \, $p_1 \cup p_2 \in Q$;

\item[${{}}$]   $(\beta) \quad$ moreover, letting 
$q = p_1 \cup p_2$, if clause (b) of \ref{2B.16}(1)(B) holds

\hskip25pt  between $p_k$ and $q$, for $k = 1,2$, \then \, $q$
is the lub, in $\bbQ$, of 

\hskip25pt  $p_1$ and $p_2$.

\item[$(c)$]   if $p \le q$ and $\kappa \in \Theta$, \then \, there are 
$r,s \in Q$ such that:
\begin{enumerate}
\item[$(\alpha)$]  $p \le^{\text{\rm pr}}_\kappa r 
\le^{\text{\rm ap}}_\kappa q$,

\item[$(\beta)$]  $p \le^{\text{\rm ap}}_\kappa s 
\le^{\text{\rm pr}}_\kappa q,$ 

\item[$(\gamma)$]   $q = r \cup s$,

\item[$(\delta)$]   $q$ is the $\le$-lub of $r,s$.
\end{enumerate}

\item[$(d)$]   if $q \in Q$ \then:

\begin{enumerate}
\item[$(\alpha)$]  $\emptyset \le q$ (and $\emptyset$, the empty
function, $\in Q_{\mathbf p}$),

\item[$(\beta)$]  $(\forall r)(q \le r \equiv q \le^{\text{\rm ap}}_\mu r)$,

\item[$(\gamma)$]    $\kappa \in \Theta \backslash \{\mu\}
\Rightarrow \emptyset \le^{\text{\rm pr}}_\kappa q$,

\item[$(\delta)$]   $\emptyset \ne q \Rightarrow
\emptyset \nleq^{\text{\rm ap}}_\kappa q$ for 
any $\kappa \in \Theta \backslash \{\mu\}.$
\end{enumerate}

\item[$(e)$]   if $\kappa_1 \le \kappa_2$ are both from 
$\Theta$, \then:

$\le^{\text{\rm pr}}_{\kappa_2} \subseteq \le^{\text{\rm pr}}_{\kappa_1}$ and
$\le^{\text{\rm ap}}_{\kappa_1} \subseteq \le^{\text{\rm ap}}_{\kappa_2},$

\item[$(f)$]   if $\kappa \in \Theta$ and $p \le^{\text{\rm ap}}_\kappa q$
and $p \le^{\text{\rm pr}}_\kappa r$, \then \,: 
\begin{enumerate}
\item[$(\alpha)$]  $q \cup r$ is a well defined function $\in Q,$

\item[$(\beta)$]  $p \le (q \cup r),$

\item[$(\gamma)$]  $q \le^{\text{\rm pr}}_\kappa (q \cup r),$

\item[$(\delta)$]  $r \le^{\text{\rm ap}}_\kappa (q \cup r),$

\item[$(\varepsilon)$]   $q \cup r$ is a $\le$-lub of $q,r$ in
$\bbQ_{\mathbf p}.$
\end{enumerate}

\item[$(g)$]   if $\kappa \in \Theta, p \le^{\text{\rm pr}}_\kappa q_i 
\, (i = 1,2)$ and $q_1,\ q_2$ are compatible in $\mathbf Q$ (even just as
functions), \then \, $p \le^{\text{\rm pr}}_\kappa \, (q_1 \cup q_2),$

\item[$(h)$]   if $p \le^{\text{\rm ap}}_\kappa q_k$ for $k = 1,2$, and
$q_1,q_2$ are compatible in $\mathbf Q$ (even just as functions),
\then \, $q_k \le^{\text{\rm ap}}_\kappa (q_1 \cup q_2)$ for $k=1,2,$

\item[$(i)$]   $(\alpha) \quad$ if 
$\{p_\varepsilon \colon \varepsilon < \zeta\}$ has an
$\le$-upper bound \then \,
$\cup\{p_\varepsilon \colon \varepsilon < \zeta\}$ is an 

\hskip25pt upper bound,

\item[${{}}$]  $(\beta) \quad$ similarly for
$\le^{\text{\rm pr}}_\kappa,\le^{\text{\rm ap}}_\kappa,$

\item[${{}}$]  $(\gamma) \quad$ assume $p_\varepsilon \in Q$ for every
$\varepsilon < \zeta$, and $p_\varepsilon,p_\xi$ has a common 

\hskip25pt  $\le^{\text{\rm x}}_\kappa$-upper bound for any 
$\varepsilon,\xi < \zeta$; \then \, the union of

\hskip25pt  $\{p_\varepsilon \colon \varepsilon < \zeta\}$ is a 
$\le^{\text{\rm x}}_\kappa$-lub

\hskip35pt when $x = \text{\rm us,ap}$ and $\zeta < \lambda$

\item[${{}}$]  $(\delta) \quad$ if $\{p_\varepsilon \colon \varepsilon <
\zeta\} \subseteq Q$ has a common $\le^{\text{\rm pr}}_\kappa$-upper bound
and $\zeta < \partial^\kappa$, \then \, 

\hskip35pt $\{p_\varepsilon \colon \varepsilon <
\zeta\}$ has a $\le^{\text{\rm pr}}_\kappa$-lub - the union

\item[$(j)$]   if $p \le^{\text{\rm ap}}_\kappa q$ \then \, {\rm Dom}$(q)
\backslash \text{\rm Dom}(p)$ has cardinality $< \partial_\kappa$,

\item[$(k)$]   if $p_1 \le^{\text{\rm ap}}_\kappa p_3$ and $p_1 \le p_2
\le p_3$ \then \, $p_1 \le^{\text{\rm ap}}_\kappa p_2$ and 
$p_2 \le^{\text{\rm ap}}_\kappa p_3$,

\item[$(l)$]   if $p_1 \le^{\text{\rm pr}}_\kappa p_2,p_\ell 
\le^{\text{\rm ap}}_\kappa q_\ell$ for $\ell=1,2$ and $q_1 \cup q_2$
is a function, \then \, $q := q_1 \cup q_2$ is a $\le$-lub of
$q_1,q_2$ and $q_2 \le^{\text{\rm ap}}_\kappa q,q_1 \le q$,

\item[$(m)$]   assume $p_1,p_2$ are compatible in $\bbQ$ \then \,
there is a pair $(q,t)$ such that:

\begin{enumerate}
    \item[$\bullet_1$]  $p_1 \le^{\text{\rm pr}}_\kappa q$,
    
    \item[$\bullet_2$]  $p_2 \le^{\text{\rm ap}}_\kappa t$,
    
    \item[$\bullet_3$]   $q \Vdash ``t \in \name{\mathbf G} \Rightarrow
    p_1 \in \name{\mathbf G}"$,
    
    \item[$\bullet_4$]   $q,t$ are compatible
    and we say $(q,t)$ is a witness for $(p_1,p_2).$
\end{enumerate}

\item[$(n)$]   if $\langle p^\ell_\alpha \colon \alpha < \delta\rangle$ is
$\le^{\text{\rm pr}}_\kappa$-increasing for $\ell=1,2,\delta$ a limit
ordinal of cofinality $< \partial_\kappa$ and $\alpha < \delta
\Rightarrow p^1_\alpha \le^{\text{\rm ap}}_\kappa p^2_\alpha$ then
$\bigcup\limits_{\alpha < \delta} p^1_\alpha \le^{\text{\rm ap}}_\kappa
(\bigcup\limits_{\alpha < \delta} p^2_\alpha)$.
\end{enumerate}
\end{claim}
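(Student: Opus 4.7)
The plan is to verify each of the fourteen clauses (a)--(n) by direct inspection of the definitions in \ref{2B.16} together with the refinement structure of the $E_\theta$'s from \ref{2B.85} (recall $\theta \le \kappa$ in $\Theta$ implies $E_\theta$ refines $E_\kappa$) and the cardinal arithmetic in \ref{2B.48}(c). The engine throughout is that each $\partial_\theta$ is regular and satisfies $\partial_\theta^{<\partial_\theta} = \partial_\theta$, so the ``grows from $p$ to $q$'' counting condition in \ref{2B.16}(1)(B)(b) is closed under unions of short chains.

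Clauses (a) (partial-order axioms), (b) (compatible functions combine), (d) (properties of $\emptyset$), (e) (monotonicity in $\kappa$), (g)--(h) (compatibility is preserved by pr/ap), (i) (chain unions) and (k) (ap in the middle of a chain) are immediate from the definitions once one writes down what ``$[i]_\kappa$ grows'' means and uses regularity of the $\partial_\theta$'s to bound unions.

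The central combinatorial step is clause (c). Given $p \le q$ and $\kappa \in \Theta$, set
\[
s \;=\; q \upharpoonright \bigcup\{[i]_\kappa : [i]_\kappa \cap \text{Dom}(p) \ne \emptyset\},
\]
\[
r \;=\; p \;\cup\; \bigl(q \upharpoonright \{\alpha \in \text{Dom}(q) : [\alpha]_\kappa \cap \text{Dom}(p) = \emptyset\}\bigr).
\]
Both $r,s$ inherit the size bound and so lie in $Q$, $r\cup s = q$, and by construction $p \le^{\text{pr}}_\kappa r$ (no $E_\kappa$-class in $p$ has grown into $r$), $r \le^{\text{ap}}_\kappa q$ (same classes represented), $p \le^{\text{ap}}_\kappa s$, and $s \le^{\text{pr}}_\kappa q$. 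Clauses (f) and (l) then follow by combining (c) with (b), and clause (j) is a counting check: a new element of $\text{Dom}(q)\setminus\text{Dom}(p)$ lies in an $E_\kappa$-class already represented in $p$, and within each such class there are $<\partial_\kappa$ points in $q$; the cardinal arithmetic in \ref{2B.48}(c), via the telescoping through larger elements of $\Theta$, keeps the total below $\partial_\kappa$. For clause (n), since $\text{cf}(\delta) < \partial_\kappa$ and $\partial_\kappa$ is regular, a pigeonhole argument shows any $E_\kappa$-class appearing in $\bigcup_\alpha p^2_\alpha$ already appears in some $p^2_{\alpha_0}$, and since $p^1_{\alpha_0} \le^{\text{ap}}_\kappa p^2_{\alpha_0}$ that class is already in $p^1_{\alpha_0} \subseteq \bigcup_\alpha p^1_\alpha$; so no class grows and the bound holds.

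The hardest clause is (m), the witness axiom, since this is exactly $\circledast(j)$ of \ref{1c.15}(1) that powers the partition relation in \ref{1c.25}. Given $p_1,p_2$ compatible in $\bbQ$, apply (c) to the extension $p_2 \le (p_1\cup p_2)$ (after checking the combined function is in $Q$ via (b)) to factor as $p_2 \le^{\text{ap}}_\kappa t \le^{\text{pr}}_\kappa p_1\cup p_2$. Symmetrically, factor $p_1 \le (p_1\cup p_2)$ to pull out the pr-part $q$ above $p_1$, so that $q \cup t = p_1\cup p_2$ and $q,t$ are compatible. The forcing statement $q \Vdash ``t \in \name{\bold G} \Rightarrow p_1 \in \name{\bold G}"$ holds because any generic filter containing both $q$ and $t$ must also contain their union $p_1\cup p_2$, hence $p_1$. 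I expect the main obstacle to be verifying that the $q$ so produced is genuinely $\le^{\text{pr}}_\kappa$ above $p_1$ (not merely $\le$), which is precisely why the decomposition in (c) is phrased in terms of classes of $p$ rather than of the target; the cleanup uses (g) to see that pr-extensions survive union with compatible conditions.
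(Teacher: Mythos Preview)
Your approach matches the paper's: it declares the claim ``straightforward'' and then spells out only clauses (i), (j), and (m). Your decomposition for (c) is exactly the natural one, and your treatment of (m) via (c) applied to a common upper bound is the paper's idea (the paper uses an arbitrary common upper bound $r$ rather than $p_1\cup p_2$, but this is cosmetic).

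Two places deserve more care. First, in (j) your phrase ``telescoping through larger elements of $\Theta$'' is not the point. The argument is simpler: the $E_\kappa$-classes containing points of $\mathrm{Dom}(q)\setminus\mathrm{Dom}(p)$ are exactly the $E_\kappa$-classes that \emph{grow} from $p$ to $q$, and by clause (B)(b) of Definition~\ref{2B.16}(1) (applied with $\theta=\kappa$, since $p\le q$) there are fewer than $\partial_\kappa$ of them; each contributes $<\partial_\kappa$ points of $\mathrm{Dom}(q)$, and $\partial_\kappa$ is regular. No larger $\theta$'s enter.

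Second, in (m) you apply (c) to the extension $p_2 \le p_1\cup p_2$, but compatibility in $\mathbb{Q}$ and clause (b)($\alpha$) only give $p_1\cup p_2\in Q$, not $p_2 \le_{\mathbb{Q}} p_1\cup p_2$. You need the growth condition (B)(b) of \ref{2B.16}(1), which does hold: take any common upper bound $r$; then $p_2\subseteq p_1\cup p_2\subseteq r$, so every $E_\theta$-class growing from $p_2$ to $p_1\cup p_2$ also grows from $p_2$ to $r$, and there are $<\partial_\theta$ of those since $p_2\le r$. Once this is said, your factorization goes through. (The same remark applies to $p_1$.) In (n) you should likewise note that the inclusion $\bigcup_\alpha p^1_\alpha \le \bigcup_\alpha p^2_\alpha$ in $\mathbb{Q}$, not just the matching of $E_\kappa$-classes, needs to be checked; it follows by the same ``growth is inherited from a cofinal witness'' trick using $\mathrm{cf}(\delta)<\partial_\kappa$.
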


\begin{PROOF}{\ref{2B.20}}

Straightforward. E.g.

\underline{Clause (i)}:

So assume $x \in \{\text{us, pr, ap}\}$ and $\kappa \in \Theta$ and
$\{p_\varepsilon \colon \varepsilon < \zeta\} \subseteq Q$ and $q \in Q$
is an $\le^{\text{\rm x}}_\kappa$-upper bound of $\{p_\varepsilon \colon \varepsilon
<\zeta\}$.  Let $p := \cup\{p_\varepsilon \colon \varepsilon < \zeta\}$
\then \,  we shall prove that $p \in Q$ and $p$ is a
$\le^{\text{\rm x}}_\kappa$-upper bound 
of $\{p_\varepsilon \colon \varepsilon < \zeta\}$;
this clearly suffices for proving sub-clauses $(\alpha),(\beta)$ of 
clause (i), and the $\le^x_\kappa$-lub part, i.e. sub-clauses
$(\gamma),(\delta)$ are left to the reader; 
for $(\gamma),(\delta)$, see \ref{2B.24}(1B),(1A).

Now

\begin{enumerate}
\item[$(*)_1$]   $p$ is a well defined function with domain
$\subseteq \mu$ and $p \subseteq q$.
\end{enumerate}

[Why?  As $\varepsilon < \zeta \Rightarrow p_\varepsilon \subseteq q$,
i.e. as functions (by \ref{2B.16}(1)(B)(a)) clearly $p \subseteq q$,
as functions, so $p$ is a well defined function with domain $\subseteq
\dom(q)$ but $\dom(q) \subseteq \mu$ by \ref{2B.16}(A)(a).]

\begin{enumerate}
\item[$(*)_2$]   if $i < \mu$ and $\theta \in \Theta$ \then  \, the
cardinality of $(i/E_\theta) \cap \dom(p)$ is $< \partial_\theta$.
\end{enumerate}

[Why?  Recall $p \subseteq q \in Q$, see above so as $q \in Q$ by
 \ref{2B.16}(1)(a) we have $|(i/E_\theta) \cap 
\dom(p)| \le |(i/E_\theta) \cap \dom(q)| < \partial_\theta$.]

\begin{enumerate}
\item[$(*)_3$]    $p \in Q$.
\end{enumerate}

[Why?  By $(*)_1 + (*)_2$ recalling \ref{2B.16}(1)(A).]

\begin{enumerate}
    \item[$(*)_4$]    $p_\varepsilon \subseteq p$ for $\varepsilon < \zeta$.
\end{enumerate}

[Why?  By the choice of $p$.]

\begin{enumerate}
\item[$(*)_5$]    if $\varepsilon < \zeta$ and $\theta \in \Theta$
then $\{A \in \mu/E_\theta \colon A$ grows from $p_\varepsilon$ to $p\}$ has
cardinality $< \partial_\theta$.
\end{enumerate}

[Why?  Because, recalling $p \subseteq q$, this set is included in
$\{A \in \mu/E_\theta \colon A$ grows from $p_\varepsilon$ to $q\}$ which has
cardinality $< \partial_\theta$ because $p_\varepsilon \le q$ which
holds as $p_\varepsilon \le^x_\kappa q$.]

\begin{enumerate}
\item[$(*)_6$]    $p_\varepsilon \le p$ for $\varepsilon < \zeta$.
\end{enumerate}

[Why?  By $(*)_4 + (*)_5$ recalling \ref{2B.16}(1)(B).]

\begin{enumerate}
\item[$(*)_7$]    if $x = \text{ us}$ \then \, $p$ is a $\le$-upper
bound of $\{p_\varepsilon \colon \varepsilon < \zeta\}$.
\end{enumerate}

[Why?  By $(*)_3 + (*)_6$.]

\begin{enumerate}
\item[$(*)_8$]    if $x = \text{ pr}$ and $\varepsilon < \zeta$
\then \, $p_\varepsilon \le^{\pr}_\kappa p$.
\end{enumerate}

[Why?  If $\kappa = \mu$ then $\le^{\pr}_\kappa$ is equality and
$p_\varepsilon \le^{\pr}_\kappa q$ hence $p_\varepsilon = q$ but
$p_\varepsilon \subseteq p \subseteq q$ hence $p_\varepsilon = p$ 
so this is trivial, hence assume $\kappa < \mu$.
We have to check \ref{2B.16}(2)(A), now clause (a) there
holds by $(*)_6$ and clause (b) there holds as no
$E_\kappa$-equivalence class grows from $p_\varepsilon$ to $q$ (as
$p_\varepsilon \le^{\pr}_\kappa q$) and $p \subseteq q$.]

\begin{enumerate}
\item[$(*)_9$]   if $x = \text{ pr}$ then $p$ is a
$\le^{\text{\rm x}}_\kappa$-upper bound of 
$\{p_\varepsilon \colon \varepsilon < \zeta\}$.
\end{enumerate}

[Why?  By $(*)_8$.]

\begin{enumerate}
\item[$(*)_{10}$]   if $x = \ap$ and $\varepsilon < \zeta$
then $p_\varepsilon \le^{\ap}_\kappa p$.
\end{enumerate}

[Why?  If $\kappa = \mu$ then $\le^{\ap}_\kappa =
\le^{\text{us}}_\kappa$ and we are done by $(*)_7$.  Assume $\kappa < \mu$.  
We have to check \ref{2B.16}(2)(B).  First, clause (a) there
holds by $(*)_6$.  Second, clause (b) there holds because if $A \in
\dom(p)/E_\kappa$ then $A \cap \dom(p) \ne \emptyset$ by
the definition, hence $A \cap \dom(q) \ne \emptyset$ as $p
\subseteq q$ by $(*)_1$, but this implies $A \cap \text{
Dom}(p_\varepsilon) \ne \emptyset$ because $p_\varepsilon
\le^{\ap}_\kappa q$, as required.]

\begin{enumerate}
    \item[$(*)_{11}$]    if $x = \ap$ then $p$ is a
    $\le^{\text{\rm x}}_\kappa$-upper bound 
    of $\{p_\varepsilon \colon \varepsilon < \zeta\}$.
\end{enumerate}

[Why?  By $(*)_{10}$.]

The $\le^{\text{\rm x}}_\kappa$-lub parts 
are easy too, for a limit ordinal $\delta$ see \ref{2B.24}(1A).

\underline{Clause (j)}:

Let ${\cU} = \{A \colon A \in \mu/E_\kappa$ and $A$ grows from $p$ to $q\}$.  Recalling Definition \ref{2B.16}(1)(B)(b) clearly, as $p \le q$, we have $|{\cU}| < \partial_\kappa$.  But as $p \le^{\ap}_\kappa
q$ necessarily $\dom(q) \backslash \dom(p)$ is included in $\cup\{A \colon A \in {\cU}\}$.  Also as $q \in Q$ by Definition  \ref{2B.16}(1)(A)(b) we have $A \in {\cU} \Rightarrow |A \cap  \dom(q)| < \partial_\kappa$.

So $\dom(q) \backslash \dom(p)$ is included in 
$\cup\{A \cap \dom(q) \colon A \in {\cU}\}$, a union of  $< \partial_\kappa$ sets each of cardinality $< \partial_\kappa$. But $\partial_\kappa$ is regular by \ref{2B.48}(C)$(\beta)$, so we are
done.] 

\underline{Clause (m)}: As $p_1,p_2$ are compatible in $\bbQ$, there is $r \in \bbQ$ such that $p_1 \le r,p_2 \le r$.  Choose $t = \cup \{r \rest (i/E_\kappa) \colon i/E_\kappa$ grow from $p_2$ to $r\} \cup p_2$, so $t \in Q$ and $p_2 \le^{\text{\rm ap}}_\kappa t \le^{\text{\rm pr}}_\kappa r$. Choose $q = \cup\{r \rest (i/E_\kappa) \colon i/E_\kappa$ does not grow from $p_1$ to $r\} \cup r$, so $q \in Q$ and $p_1  \le^{\text{\rm pr}}_\kappa q \le^{\text{\rm apr}}_\kappa r$.  

Now check.
\end{PROOF}

\begin{claim}\label{2B.24}  
    Let $\kappa \in \Theta$.
    
    1) $(Q,\le^{\text{\rm pr}}_\kappa)$ 
    is $(< \partial^\kappa)$-complete and in
    fact if $\bar p = \langle p_\alpha \colon \alpha <
    \delta\rangle$ is $<^{\text{\rm pr}}_\kappa$-increasing, $\delta$ a limit
    ordinal $< \partial^\kappa$ \then \, $p_\delta := \cup\{p_\alpha \colon \alpha <
    \delta\}$ is a $\le^{\pr}_\kappa$-lub and $a \le$-lub of $\bar p$; 
    we use $\kappa < \theta \in \Theta \Rightarrow 
    \partial^\kappa \le \partial_\theta$, see \ref{2B.12}(2a).  
    
    1A) If $\gamma(*) < \partial^\kappa$ and $p_\alpha \in Q$ for $\alpha
    < \gamma(*)$ and $p_\alpha,p_\beta$ has a common
    $\le^{\text{\rm pr}}_\kappa$-lub for any $\alpha,\beta < \gamma(*)$ \then
    \, $p_* = \cup\{p_\alpha \colon \alpha < \gamma(*)\}$ is a
    $\le^{\text{\rm pr}}_\kappa$-lub of $\{p_\alpha \colon \alpha < \gamma(*)\}$.
    
    1B) If $\gamma(*) < \lambda$ then (1A) holds for $\le^{\ap}_\kappa$.
    
    2) If $k \in \Theta$ and $p \in \bbQ$ \then \, $\bbQ_{p, k} := \bbQ_{\mathbf p,p, k} =  (\{q \colon p \le^{\text{\rm ap}}_\kappa q\},
    <^{\text{\rm ap}}_\kappa)$  satisfies\footnote{compare with \cite[1.8]{Sh:608}}
    the $(\partial_\kappa)^+$-c.c.
    
    3) Moreover if $\langle p_\alpha \colon \alpha < \partial^+_\kappa\rangle$ is
    $\le^{\text{\rm pr}}_\kappa$-increasing continuous and $p_\alpha \le^{\text{\rm ap}}_\kappa q_\alpha$ for $\alpha < \partial_\kappa^+$, 
    \then \, for some $\alpha < \beta$ the conditions $q_\alpha,q_\beta$  are compatible in $\mathbb Q,$ moreover there is $r$ such that $q_\alpha
    \le r$ and $q_\beta \le^{\text{\rm ap}}_\kappa r$ and $p_\alpha = p_\beta \Rightarrow q_\alpha \le^{\text{\rm ap}}_\kappa r \wedge q_\beta
    \le^{\text{\rm ap}}_\kappa r$.
    
    4) Assume $p \in \bbQ_{\mathbf p},\chi = |A| < \partial^\kappa,\kappa \in
    \Theta$ and $p \Vdash ``\name f$ is a function from $A \in \mathbf V$ to $\mathbf V$".  \Then \, we can find $q$ such that:
    
    \begin{enumerate}
    \item[$(\alpha)$]   $p \le^{\text{\rm pr}}_\kappa q,$
    
    \item[$(\beta)$]   if $a \in A$ then ${\cI}_{q,\name f,a} :=  \{r \colon q \le^{\text{\rm ap}}_\kappa r$ and $r$ forces a value to $\name f(a)\}$ is predense over $q$ in $\bbQ_{\mathbf q},$
    
    \item[$(\gamma)$]   moreover some subset 
    ${\cI}'_{q,\name f,a}$ of ${\cI}_{q,\name f,a}$ of cardinality $\le \partial_\kappa$ is predense over $q$ in $\bbQ_{\mathbf q},$ (really
    follows).
    \end{enumerate}
\end{claim}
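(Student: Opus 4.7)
I would handle the four parts in order, with (1)--(3) serving as ingredients for the main part (4).

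For (1) and (1A) my candidate is $p_\delta := \bigcup_{\alpha < \delta} p_\alpha$. The key observation is the ``freezing'' property of $\le^{\text{pr}}_\kappa$: because no $E_\kappa$-class grows between $\le^{\text{pr}}_\kappa$-comparable conditions, once an $E_\kappa$-class $A$ is first represented in some $p_{\alpha_0}$, we have $A \cap \text{Dom}(p_\beta) = A \cap \text{Dom}(p_{\alpha_0})$ for every $\beta \in [\alpha_0,\delta)$. To check $p_\delta \in Q$, I split on $\theta \in \Theta$: for $\theta \le \kappa$, every $E_\theta$-class $B$ sits inside an $E_\kappa$-class (Observation \ref{2B.12}), so the bound $|B \cap \text{Dom}(p_\delta)| < \partial_\theta$ is inherited from a single stage $p_{\alpha_0}$; for $\theta > \kappa$ in $\Theta$ I use $\delta < \partial^\kappa \le \partial_\theta$ (Observation \ref{2B.12}(2a)) together with regularity of $\partial_\theta$ to sum the $\delta$-many bounds. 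The same dichotomy bounds the number of $E_\theta$-classes growing from $p_\alpha$ to $p_\delta$, so $p_\alpha \le^{\text{pr}}_\kappa p_\delta$; the lub property is a straightforward diagram chase, and (1A) is the same argument for directed families rather than chains.

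Part (2) is a $\Delta$-system argument: by clause (j) of \ref{2B.20}, $|\text{Dom}(q_\alpha)\setminus\text{Dom}(p)| < \partial_\kappa$, and $(\partial_\kappa)^{<\partial_\kappa} = \partial_\kappa$ by Hypothesis \ref{2B.48}(c)$(\beta)$, so the standard $\Delta$-system lemma plus pigeonholing on values over the root produces $\alpha\ne\beta$ such that $r := q_\alpha \cup q_\beta$ is a well-defined function; the bounds on $E_\theta$-class intersections in $r$ are inherited from those in $q_\alpha, q_\beta$, and $\text{Dom}(r)/E_\kappa = \text{Dom}(p)/E_\kappa$ because both $q_\alpha, q_\beta$ had this property, yielding $q_\alpha, q_\beta \le^{\text{ap}}_\kappa r$. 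Part (3) runs the same $\Delta$-system merge but parameterized by $\langle p_\alpha\rangle$: I thin on $\text{Dom}(q_\alpha)\setminus\text{Dom}(p_\alpha)$ rather than on $\text{Dom}(q_\alpha)\setminus\text{Dom}(p)$, use continuity of the sequence on a club to align the roots, and in the diagonal case $p_\alpha = p_\beta$ part (2) applied with $p := p_\alpha$ upgrades $q_\alpha \le r$ to $q_\alpha \le^{\text{ap}}_\kappa r$.

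For (4) I enumerate $A = \{a_\beta : \beta < \chi\}$ and recursively build a $\le^{\text{pr}}_\kappa$-increasing sequence $\langle q_\beta : \beta \le \chi\rangle$ with $q_0 = p$, taking unions at limits via part (1) (legitimate since $\chi < \partial^\kappa$). At a successor step I produce $q_{\beta+1} \ge^{\text{pr}}_\kappa q_\beta$ together with a predense antichain $I_\beta \subseteq \cI_{q_{\beta+1},\name{f},a_\beta}$ of size $\le \partial_\kappa$ in $\bbQ_{q_{\beta+1}}$. The main obstacle is the existence of such $q_{\beta+1}$: given an arbitrary $\le^{\text{ap}}_\kappa$-extension of $q_\beta$ there is a further $\le_{\bbQ}$-extension forcing a value for $\name{f}(a_\beta)$, which I repackage as a $(\le^{\text{pr}}_\kappa, \le^{\text{ap}}_\kappa)$-witness using clause (m) of \ref{2B.20}; if no suitable $q_{\beta+1}$ existed, iterating these witnesses yields a $\le^{\text{pr}}_\kappa$-increasing continuous sequence of length $\partial_\kappa^+$ together with pairwise $\le^{\text{ap}}_\kappa$-incompatible extensions forcing distinct values, contradicting part (3). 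Setting $q := q_\chi$, I define $\cI'_{q,\name{f},a_\beta} := \{r \cup q : r \in I_\beta\}$ via clause (f) of \ref{2B.20} (applied with $q_{\beta+1} \le^{\text{ap}}_\kappa r$ and $q_{\beta+1} \le^{\text{pr}}_\kappa q$), and verify its predensity over $q$ by reducing $\le^{\text{ap}}_\kappa$-compatibility over $q$ to $\le^{\text{ap}}_\kappa$-compatibility over $q_{\beta+1}$ using the decomposition and gluing clauses (c) and (f) of \ref{2B.20}.
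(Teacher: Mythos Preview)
Your plan is essentially correct and tracks the paper's proof closely. Two points of comparison are worth noting.

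For part (4), the paper organizes things slightly differently: it first reduces to the case $|A|=1$ (using part (1) and clause (f) of \ref{2B.20} to glue the single-element solutions along a $\le^{\text{pr}}_\kappa$-chain of length $\chi < \partial^\kappa$), and only then runs the $\partial_\kappa^+$-long attempted construction whose failure is forced by part (3). Your outer-induction-on-$A$ packaging is equivalent, but the paper's reduction isolates the real content in the single-name case and makes the use of clause (c) (rather than (m)) for the $(\le^{\text{pr}}_\kappa,\le^{\text{ap}}_\kappa)$-decomposition slightly more transparent. The paper also disposes of $\kappa=\mu$ separately at the outset (there $\le^{\text{ap}}_\mu = \le$ and $q=p$ already works by part (2)); your argument handles this case only degenerately, so it is worth flagging.

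For part (3), your phrase ``continuity of the sequence on a club to align the roots'' is not what the paper does and is a bit loose. After the $\Delta$-system on $u_i := \text{Dom}(q_i)\setminus\text{Dom}(p_i)$, the paper performs one further thinning: since each $E_{<\kappa}$-class has cardinality $\le \partial_\kappa$ (Observation \ref{2B.12}(2e)), one can arrange that for $i\ne j$ in the refined set, every $\alpha \in u_i\setminus u_*$ has $\alpha/E_{<\kappa}$ disjoint from $u_j$. This step is what guarantees that $q_i\cup q_j$ satisfies the growth bounds for $E_\theta$-classes with $\theta\in\Theta\cap\kappa$ (where $\partial_\theta < \partial_\kappa$), which your sketch does not address. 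Without it, the verification that $q_\alpha \le r$ and $q_\beta \le^{\text{ap}}_\kappa r$ for $r = q_\alpha\cup q_\beta$ does not go through cleanly.
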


\begin{PROOF}{\ref{2B.24}}
    1) By (1A).
    
    1A) Let $q_{\alpha,\beta}$ be a common
    $\le^{\pr}_\kappa$-upper bound of $p_\alpha,p_\beta$ for
    $\alpha,\beta < \gamma(*)$.
    Why is $p_* \in Q$?  Let us check Definition \ref{2B.16}(1)(A).
    
    Clearly $p_*$ is a partial function from $\mu$ to $\{0,1\}$ so
    clause (a) there holds.  For checking clause (b) there, assume $\theta \in
    \Theta$ and $A \in \mu/E_\theta$.  
    
    First, assume $\theta \le \kappa$
    and $A \cap \dom(p_*) \ne \emptyset$ then for some $\alpha
    < \gamma(*)$ we have $A \cap \dom(p_\alpha) \ne \emptyset$, 
    hence $A \cap \dom(p_*) = \cup\{A \cap \text{
    Dom}(p_\beta) \colon \beta < \gamma(*)\} \subseteq \cup\{A \cap 
    \dom(q_{\alpha,\beta}) \colon \beta < \gamma(*)\}$, but $p_\alpha
    \le^{\text{\rm pr}}_\kappa q_{\alpha,\beta}$ and 
    $A \cap \dom(p_\alpha) \ne \emptyset$ hence
    $A \cap \dom(q_{\alpha,\beta}) = A \cap \text{
    Dom}(p_\alpha)$.  Together 
    $A \cap \dom(p_*)$ is equal
    to $A \cap \dom(p_\alpha)$ which, because $p_\alpha \in Q$,  
    has cardinality $< \partial_\theta$ as required in clause (b) of Definition 
    \ref{2B.16}(1)(A).  
    
    Second, of course, if $A \cap \dom(p_*) =
    \emptyset$ this holds, too.
    
    Third, assume $\theta > \kappa$, then $\alpha < \gamma(*) \Rightarrow
    p_\alpha \in Q \Rightarrow |A \cap \dom(p_\alpha)| <
    \partial_\theta$, hence $|A \cap \dom(p_*)| = |A \cap
    \bigcup\limits_{\alpha < \gamma(*)} \dom(p_\alpha)| \le 
    \sum\limits_{\alpha < \gamma(*)} |A \cap \dom(p_\alpha)|$ which 
    is $< \partial_\theta$ as $\gamma(*) < \partial^\kappa \le \partial_\theta = 
    \text{ cf}(\partial_\theta)$, so again the desired conclusion of clause (b) of
    Definition \ref{2B.16}(1)(A) holds.  Together indeed $p_* \in Q$. 
    
    Why $\alpha < \gamma(*) \Rightarrow p_\alpha \le p_*$?  We have to
    check \ref{2B.16}(1)(B), obviously clause (a) there holds.  
    Clause (b) there is proved as above.
    
    Why $\alpha < \gamma(*) \Rightarrow p_\alpha \le^{\pr}_\kappa
    p_*$?  We have to check Definition \ref{2B.16}(2)(A), now
    clause (a) there was just proved and clause (b) there holds as in the
    proof of ``$p_* \in Q$".
    
    Next we show that $p_*$ is a $\le^{\pr}_\kappa$-lub of
    $\bar p$, so assume $q \in Q$ and $\alpha < \delta \Rightarrow
    p_\alpha \le^{\pr}_\kappa q$.  To show $p_*
    \le^{\pr}_\kappa q$ we have to check clauses (B)(a),(b) of
    \ref{2B.16}(1) and (A)(b) of \ref{2B.16}(2).  As $p_* =
    \cup\{p_\alpha \colon \alpha < \gamma(*)\}$, clearly $p_*
    \subseteq q$ as a function so \ref{2B.16}(1)(B)(a) above holds.  
    Also if $A \in \mu/E_\kappa$ and $A$ is represented in $p_*$ then it is
    represented in $p_\alpha$ for some $\alpha < \gamma(*)$, but $p_\alpha
    \le^{\pr}_\kappa q$ so $q \restriction A = p_\alpha
    \restriction A$ but $(p_\alpha \rest A) \subseteq (p_* \rest A) \subseteq (q
    \rest A)$ hence $q \restriction A = p_* \restriction A$ as
    required in \ref{2B.16}(2)(A)(b).
    
    Lastly, when $\theta \in \Theta$, \ref{2B.16}(1)(B)(b) holds: if
    $\theta \le \kappa$ because more was just proved and if $\theta > \kappa$ it
    is proved as in the proof of $p_* \in Q$.

    2) This is a special case of (3) when $\langle p_\alpha \colon \alpha
    < \partial^+_\kappa\rangle$ is constant (recalling \ref{2B.20}(h)).

    3) So in particular $p_i \le^{\ap}_\kappa q_i$ for $i <
    \partial_\kappa^+$.  Hence by clause (j) of Claim \ref{2B.20} the set
    $u_i := \dom(q_i) \backslash 
    \dom(p_i)$ has cardinality $< \partial_\kappa$.  Hence
    by the $\Delta$-system lemma (recalling that $(\partial_\kappa)^{<
     \partial_\kappa} = \partial_\kappa$ by \ref{2B.48}(c)$(\beta)$) 
    for some unbounded ${\cU} \subseteq
    \partial_\kappa^+$ the sequence $\langle u_i \colon i \in {\cU}\rangle$ 
    is a $\Delta$-system, with heart $u_*$.  Moreover, since $2^{|u_*|}
    \le \partial^{< \partial_\kappa}_\kappa = \partial_\kappa <
    \partial^+_\kappa$, we can assume that $q_i \rest u_* = q_*$ for every
    $i \in \cU$.
    
    As each $E_{< \kappa}$-class has cardinality $\le \partial_\kappa$
    (see \ref{2B.12}(2)(c),(e)), without loss of generality for
    every $i \ne j$ from ${\cU}$, if $\alpha \in u_i \backslash u_*$
    then $\alpha/E_{< \kappa}$ is disjoint to $u_j$.  Now by \ref{2B.20}(h) 
    for every $i,j \in {\cU}$, the function $q=q_i \cup q_j$ is a
    $\le^{\ap}_\kappa$-lub of $q_i,q_j$ for part (2), i.e. when $p_i
    = p_j$.  Also it is easy to check that for $i<j,q$ is a
    $\le$-lub of $q_i,q_j$ which is $\le^{\ap}_\kappa$-above $q_j$
    for part (3).

    4) If $\kappa = \mu$ then $\le^{\ap}_\kappa = \le$ by clause
    \ref{2B.20}(a)$(\gamma)$, recall $\bbQ_p =  (\{q \in Q \colon p \le q\},
    \le_{\bbQ_{\mathbf p}})$ so $q=p$ can serve, as $\bbQ_p$ satisfies 
    the $\partial^+_\kappa$-c.c. by part (2); so we shall assume $\kappa < \mu$. 
    Recall that $\partial_\kappa < \partial^\kappa$ by \ref{2B.12}(2)(b).
     As $|A| < \partial^\kappa = \text{ cf}(\partial^\kappa)$, 
    by part (1) of the claim and clause (f) of Claim \ref{2B.20} it 
    is enough to consider the case $A = \{a\}$.  Now we try to choose 
    $p_i,r_i,b_i$ by induction on $i < \partial_\kappa^+$, but $r_i,b_i$
    are chosen in stage $i+1$ together with $p_{i+1}$, such that:
    
    \begin{enumerate}
    \item[$\circledast$]   $(a) \quad p_0 = p$,
    
    \item[${{}}$]   $(b) \quad \langle p_j \colon j \le i\rangle$ is
    $\le^{\pr}_\kappa$-increasing,
    
    \item[${{}}$]   $(c) \quad p_{i+1} \le^{\ap}_\kappa r_i$,
    
    \item[${{}}$]   $(d) \quad p_{i+1} \Vdash$ ``if $r_i \in
    \name{\mathbf G}_{\mathbf Q}$ then $\name f(a) = b_i$",
    
    \item[${{}}$]   $(e) \quad p_{i+1} \Vdash$ ``if $r_i \in
    \name {\mathbf G}_{\mathbf Q}$ then for no $j<i$ do we
    have $r_j \in \name {\mathbf G}_{\mathbf Q}$",
    
    \item[${{}}$]  $(f) \quad$ if $i$ is a limit, then $p_i$ is the union
      so a $\le^{\pr}_\kappa$-lub of $\langle p_j \colon j<i\rangle$.
    \end{enumerate}
    
    For $i=0$ just use clause (a) of $\circledast$.
    
    For $i$ limit use clause (f) of $\circledast$ recalling part (1) of
    the claim and the fact that $\partial^+_\kappa \le \partial^k$.
    
    For $i=j+1$, try to choose $q_i$ such that:
    \[
    p_j \le q_i
    \]
    
    and
    \[
    q_i \Vdash ``r_{i_1} \notin \name {\mathbf G}_{\mathbf Q} \text{ for } i_1 < j".
    \]
    
    If we cannot, we have succeeded, i.e. $p_i$ is as required from $q$ with
    ${\cI}_{p_i,\name f,a} = \{p_i \cup r_j \colon j<i\}$.   
    If we can, let $(b_j,r_j)$ be such
    that $q_i \le r_j$ and $r_j$ forces $\name f(a) =
    b_j$; clearly possible. By clause (c) of Claim \ref{2B.20} applied
    to the pair $(p_j,r_j)$ we choose\footnote{we can use $r'_j$ such that
    $p_j \le^{\ap}_\kappa r'_j \le^{\pr}_\kappa r_j$ such that
    $r_j$ is the $\le$-lub of $r'_j,p_{i+1}$, may be helpful but not
    needed now.} $p_i$ such that $p_j
    \le^{\pr}_\kappa p_i \le^{\ap}_\kappa r_j$ and clearly we
    have carried out the induction.  But if we carry the induction then we get
    a contradiction by part (3).  So we have to be stuck for some $i <
    \partial_\kappa^+$, and as said above we then get the desired
    conclusion.  
\end{PROOF}

\begin{conclusion}\label{2B.29}   
    Forcing with $\bbQ_{\mathbf p}$:
    
    \begin{enumerate}
        \item[$(a)$]   does not collapse cardinals except possibly cardinals
        from the set $\Omega_{\mathbf p}$ = $\{\theta \colon \lambda < \theta \le \mu$ 
        and for no $\kappa \in \Theta$ do we have $\partial_\kappa < \theta \le 
        \partial^\kappa\}$, so $\mu \notin \Omega_{\mathbf p}$,
        
        \item[$(b)$]   does not change cofinalities $\notin \Omega_{\mathbf p}$,
        moreover if it changes the cofinality of $\theta \in \text{ Reg}$ to
        $\chi < \theta$ then there is $\theta_1 \in \Omega_{\mathbf p}$ such
        that $\chi \leq \theta < \theta_1,$ moreover, $[\chi, \theta_{1}] \cap \rm{Reg} \subseteq \Omega_{\mathbf{p}}$,
        
        \item[$(c)$]   does not add new sequences of length $< \lambda$,
        
        \item[$(d)$]  does not change $2^\theta$ for $\theta \notin [\lambda,\mu)$,
        
        \item[$(e)$]   makes $2^\lambda = \mu$,
        
        \item[$(f)$]   also the set $\Omega'_{\mathbf p} := \cup\{(\kappa_1,
        2^{\text{sup}(\Theta \cap \kappa)}]$: for some $\kappa \in \Theta,\Theta
        \cap \kappa$ has no last member, so sup$(\Theta \cap \kappa)$ is
        strong limit and $\kappa_1 = \min(\text{Reg} \backslash
        \sup(\Theta \cap \kappa))\}$, is O.K. in clauses (a),(b),
        
        \item[$(g)$]   $\bbQ_{\mathbf p}$ has cardinality $\mu$ and satisfies the
        $\partial^+_\mu$-c.c., recalling $\partial_\mu \le \mu$.
    \end{enumerate}
\end{conclusion}

\begin{PROOF}{\ref{2B.29}}
    First, $\bbQ_{\mathbf p}$ is $(< \lambda)$-complete hence it adds no new sequences to ${}^{\lambda >}\mathbf V$, i.e. clause
    (c) holds so cardinals $\le \lambda$ are preserved as well as cofinalities $\le \lambda$ as well as $2^\theta$ for $\theta <
    \lambda$.
    
    Second, $|\bbQ_{\mathbf p}| = \mu$ as $p \in \bbQ_{\mathbf p} \Rightarrow p$
    is a function from $\dom(p) \subseteq \mu$ to $\{0,1\}$, see \ref{2B.16}(1)(A)(a) and $|\dom(p)| < \partial_\mu = \mu$ by \ref{2B.16}(1)(A)(b) and $\mu^{< \mu} = \mu$ by \ref{2.1}(a).
    
    Third, by \ref{2B.24}(2) the forcing notion $\bbQ_p$ satisfies the $\partial^+_\mu$-c.c. but $\bbQ = \bbQ_p$ when $p = \emptyset$ so $\bbQ$ satisfies the $\partial^+_\mu$-c.c. and of course $\partial_\mu
    \le \mu$.  This gives clauses (g) and (d) (recalling (c)).  
    
    Fourth, for clause (e), for any $\alpha < \mu$ let $\name \eta_\alpha \in {}^\lambda 2$ be defined by $p \Vdash ``\eta_\alpha(i) = \ell"$ \underline{iff} $i < \lambda \wedge \char 94 \alpha + i \in \dom(p)
    \wedge \char 94 \ell = p(\alpha +i)$.  By density indeed $\Vdash_{\bbQ} ``\name \eta_\alpha \in {}^\lambda 2"$ and $\Vdash_{\bbQ} ``\name \eta_\alpha \ne \name \eta_\beta"$ for $\alpha
    \ne \beta < \mu$, so clearly clause (e) holds.  
     
    Fifth, use \ref{2B.24}(2),(4) to prove clauses (a) and (b), toward contradiction assume $\theta$ is regular in $\mathbf V$ and $\theta_1$ is not in
    $\Omega_{\mathbf p}$ but $p \Vdash_{\bbQ} ``\chi = \text{ cf}(\theta) < \theta_1 \le \theta"$.  If $\theta \le \lambda$ or just $\chi < \lambda$
     use clause (c), if $\theta > \mu$ use clause (g) so necessarily $\lambda \le \chi < \theta_1 \le \theta \le \mu$.  By the choice of $\Omega_{\mathbf p}$
    there is $\kappa \in \Theta$ such that $\partial_\kappa < \theta_1 \le \partial^\kappa$ and $\chi + \partial_\kappa < \theta_1 \le \theta$; 
    now \wilog \, $p \Vdash ``\name f \colon \chi
    \rightarrow \theta$ has range unbounded in $\theta"$. Apply \ref{2B.24}(4) with $(p,\chi,\name f,\kappa)$ here standing for $(p,A,\name f,\kappa)$ there and get $q,\langle \cI_{q,\name f,\alpha} \colon \alpha < \chi\rangle$ as there.  By \ref{2B.24}(3) we have
    $|\cI_{q,\name f,\alpha}| \le \partial_\kappa$ and $\cup\{\cI_{q,\name f,\alpha} \colon \alpha < \chi\}$ has cardinality $\le \chi + \partial_\kappa <
    \theta_1$.  In any case, in $\mathbf V$ the set
    $\{\beta$ :  for some $\alpha < \chi$ and $q \nVdash ``\name f(\alpha) \ne \beta"\}$ has cardinality $< \theta_1 \le \theta$, contradiction.
    So clauses (a),(b) holds.
    
    We are left with clause (f), it is not really needed, still nice to have.  Now if $\theta \in \text{ Reg} \cap(\lambda,\mu]$ is in $\Omega'_{\mathbf p}$ and $\kappa$ witness it  then necessarily $\Theta \cap \kappa$, which is not empty has no last element so if $\theta_1 < \theta_2$ are from $\Theta \cap \theta$ then $\theta_1 \le \partial_{\theta_2} =
    (\partial_{\theta_2})^{<\partial_{\theta_2}} \le \theta_2$ hence sup$(\Theta \cap \theta)$ is strong limit.  
    
    If $\theta = \kappa$ use clause (b).  If $\theta \ge 2^{< \kappa}$ we repeat the proofs above for $\le^{\pr}_{< \kappa}$ where $\le^{\pr}_{< \kappa} = \cap\{\le^{\pr}_\theta \colon \theta \in
    \Theta \cap \theta\},\le^{\ap}_{< \kappa} = \{(p,q) \colon p \le q$ and $\alpha \in$ $\dom(p) \backslash \dom(p) \Rightarrow (\exists \theta \in \Theta \cap \theta)((\alpha/E_\theta \cap \dom(p))
    \ne \emptyset\}$.
\end{PROOF}

\begin{definition}\label{2B.33}
1) If $p \le q$ and $\kappa \in \Theta$ let supp$_\kappa(p,q) := 
\cup\{i/E_\kappa \colon i \in \dom(q) \backslash \dom(p)\}$ so
of cardinality $< \partial_\kappa$.

2) We say $\mathbf y = \langle \kappa,\bar p,\bar u\rangle = \langle
\kappa_{\mathbf y},\bar p_{\mathbf y},\bar u_{\mathbf y}\rangle$ is a
reasonable $\mathbf p$-parameter \when:

\begin{enumerate}
    \item[$\circledast_1$]  $(a) \quad \kappa \in \Theta$ but $\kappa < \mu,$ 
    
    \item[${{}}$] $ (b) \quad \theta = \theta_{\mathbf{y}} = \min(\Theta \setminus \kappa_{\mathbf{y}}^{+}),$ notice that $\theta$ is well defined, as $\kappa_{\mathbf{y}} < \mu$ and $\mu \in \Theta$,      
    
    \item[${{}}$]  $(c) \quad \bar p = \langle p_\alpha \colon \alpha < \gamma\rangle$ is
    a non-empty $\le^{\pr}_\theta$-increasing continuous sequence, 
    
    \hskip25pt so we 
    write $\gamma = \gamma_{\mathbf y},\bar p = \bar p^{\mathbf y}$ 
    and $p_\alpha = p^{\mathbf y}_\alpha,$
    
    \item[${{}}$]  $(c) \quad \bar u = \langle u_\alpha \colon \alpha < \gamma\rangle$ is
    $\subseteq$-increasing continuous, so $u_\alpha = u^{\mathbf
    y}_\alpha,\bar u = \bar u_{\mathbf y},$
    
    \item[${{}}$]  $(d) \quad u_\alpha \subseteq \cup\{i/E_\kappa \colon i \in \text{
    Dom}(p_\alpha)\}$ for $\alpha < \gamma,$
    
    \item[${{}}$]  $(e) \quad |u_\alpha| \le \partial^\kappa$ 
    for $\alpha < \gamma$.
\end{enumerate}

3) For $\mathbf y$ as above we define $\mathbf Q_{\mathbf y}$ as
$(Q_{\mathbf y},\le_{\mathbf y},\le^{\pr}_{\mathbf y},
\ap_{\mathbf y})$ (so $\bbQ_{\mathbf y} = (Q_{\mathbf y},\le_{\mathbf y})$ is
$\mathbf Q_{\mathbf y}$ as a forcing notion), where:

\begin{enumerate}
\item[$\circledast_2$] 

\item[${{}}$]   $(a) \quad Q_{\mathbf y} := \{q$: for some
$\alpha < \gamma_{\mathbf y}$ we have $p_\alpha \le^{\ap}_\theta q$
 and supp$_\theta(p_\alpha,q) \subseteq u_\alpha\},$

\item[${{}}$]   $(b) \quad \le_{\mathbf y}
= \le_{\mathbf p} \restriction Q_{\mathbf y}$,

\item[${{}}$]   $(c) \quad$ for $q \in Q_{\mathbf y}$, let
$\alpha_{\mathbf y}(q) = \min\{\alpha < \gamma_{\mathbf y} \colon p_\alpha
\le^{\ap}_\theta q$ and

\hskip25pt  supp$_\theta(p_\alpha,q) \subseteq u_\alpha\}$,

\item[${{}}$]   $(d) \quad$ the two-place relation 
$\le^{\pr}_{\mathbf y}$ is defined by $p \le^{\pr}_{\mathbf y} q$ iff:
\begin{enumerate}

\item[${{}}$]  $(\alpha) \quad p,q \in Q_{\mathbf y}$,

\item[${{}}$]  $(\beta) \quad p \le^{\pr}_{\mathbf p,\kappa} q$.

\end{enumerate}
\item[${{}}$]   $(e) \quad$ for $q \in Q_{\mathbf y}$ let ap$_{\mathbf y}(q) = 
\ap_{\mathbf Q_{\mathbf y}}(q) = \{r \in Q_{\mathbf y}:
q \le^{\ap}_\kappa r$ and

\hskip25pt  supp$_\kappa(q,r) \subseteq \text{ supp}_\theta(p_{\alpha_{\mathbf
y}(q)},q)\}$.
\end{enumerate}
\end{definition}

\begin{observation}\label{2B.34}  
    Let $\mathbf y$ be a reasonable $\mathbf p$-parameter.
    
    0) If $p_1 \le p_2 \le q_2 \le q_1$ and $\kappa_1 \ge \kappa_2$ are from $\Theta$ \underline{then} {\rm supp}$_{\kappa_2}(p_2,q_2) \subseteq 
    \text{\rm supp}_{\kappa_1}(p_1,q_1)$.
    
    0A) If $p_1 \le p_2 \le p_3$ \underline{then} {\rm supp}$_{\kappa_1}(p_1,p_3) = 
    \text{\rm supp}_{\kappa_1}(p_1,p_2) \cup \text{\rm supp}_{\kappa_1}(p_2,p_3)$.
    
    1) For $q \in Q_{\mathbf y}$ the ordinal $\alpha_{\mathbf y}(q)$ is well
    defined $< \gamma_{\mathbf y}$.
    
    2) If $q_1 \le_{\mathbf y} q_2$ are from 
    $Q_{\mathbf y}$ \underline{then} $\alpha_{\mathbf y}(q_1) \le \alpha_{\mathbf y}(q_2)$.
    
    2A) If $q_1 \in Q_{\mathbf y}$ and $q_1 \le^{\ap}_{\mathbf p,\kappa} q_2$
    \then \, $q_2 \in Q_{\mathbf y},q_1 \le_{\mathbf y} q_2$ and  $\alpha_{\mathbf y}(q_1) = \alpha_{\mathbf y}(q_2)$.
    
    3) If $p \le^{\pr}_{\mathbf y} r$ and $q \in \ap_{\mathbf y}(p)$ \then \, $s := q \cup r$ belongs to $Q_{\mathbf y},s \in       \ap_{\mathbf y}(r)$ and $q \le^{\pr}_{\mathbf y} s$. 
\end{observation}

\begin{PROOF}{\ref{2B.34}}
0), 0A) Should be easy.

1) By the definitions of $q \in Q_{\mathbf y}$ and of $\alpha_{\mathbf y}(q)$.

2) For $\ell=1,2$ letting $\alpha_\ell = \alpha_{\mathbf y}(q_\ell)$ we
have $p_{\alpha_\ell} \le^{\ap}_\theta q_\ell \wedge 
\text{ supp}_\theta(p_{\alpha_\ell},q_\ell) \subseteq
u_{\alpha_\ell}$.   If $\alpha_2
   < \alpha_1$ then $p_{\alpha_2} \le p_{\alpha_1}
   \le^{\ap}_\theta q_1 \le q_2 \wedge p_{\alpha_2}
\le^{\ap}_\theta q_2$ \underline{hence} $p_{\alpha_2}
   \le^{\ap}_\theta q_1$ (by \ref{2B.20}(k)) and
   supp$_\theta(p_{\alpha_2},q_1) \subseteq 
\text{ supp}_\theta(p_{\alpha_2},q_2) \subseteq u_{\alpha_2}$ by the
definition of \ref{2B.33}(1) of supp, contradicting the choice of $\alpha_1$.

2A) We know $p_{\alpha_{\mathbf y}(q_1)} \le^{\ap}_{\mathbf p,\kappa} q_1$
by the definition of $\alpha_{\mathbf y}(q_1)$ but we assume $q_1
\le^{\ap}_{\mathbf p,\kappa} q_2$ and $\le^{\pr}_{\mathbf p,\kappa}$ is a
quasi order hence $p_{\alpha_{\mathbf y}(q_1)} \le^{\ap}_{\mathbf
p,\kappa} q$.  So by the definition $q_2 \in Q_{\mathbf y} \wedge
\alpha_{\mathbf y}(q_1) \ge \alpha_{\mathbf y}(q_2)$.  Also clearly $q_1
\le_{\mathbf p} q_2$ hence $q_1 \le_{\mathbf y} q_2$ hence by part (2),
$\alpha_{\mathbf y}(q_1) \le \alpha_{\mathbf y}(q_*)$, together we are
done.

3) Let $\kappa = \kappa_{\mathbf y}$ and $\theta = \theta_{\mathbf y},
p_\alpha = p^{\mathbf y}_\alpha$.
By Definition \ref{2B.33}(3)(e),(f) we know that $p
\le^{\pr}_{\mathbf p,\kappa} r$ and $p \le^{\ap}_{\mathbf
p,\kappa} q$.  By Claim \ref{2B.20}(f) we know that $s \in Q_{\mathbf
p}$ and $p \le^{\ap}_{\mathbf p,\kappa} q \le^{\pr}_{\mathbf
p,\kappa} s$ and $p \le^{\pr}_{\mathbf p,\kappa} r 
\le^{\ap}_{\mathbf p,\kappa} s$ recalling $s = q \cup r$, note

\begin{enumerate}
\item[$(*)_1$]  the ordinal $\beta := \alpha_{\mathbf y}(r) 
< \gamma_{\mathbf y}$ is well defined.
\end{enumerate}

[Why?  As $r \in Q_{\mathbf y}$.]

\begin{enumerate}
\item[$(*)_2$]   $\alpha_{\mathbf y}(s) = \alpha_{\mathbf y}(r) = \beta$.
\end{enumerate}

[Why?  As $p \in Q_{\mathbf y}$ the ordinal 
$\alpha := \alpha_{\mathbf y}(p) < \gamma_{\mathbf y}$ 
is well defined and by part (2) we have $\alpha \le \beta$.  So
clearly $p_\beta \le^{\ap}_{\mathbf p,\theta} r$ by the choice of
$\beta$ and $r \le^{\ap}_{\mathbf p,\kappa} s$ as said above,
hence by \ref{2.7}(e) recalling $\kappa < \theta$, we have
$\le^{\ap}_{\mathbf p,\kappa} \subseteq \le^{\ap}_\theta$ hence $r
\le^{\ap}_{\mathbf p,\theta} s$,
so together $p_\beta \le^{\ap}_{\mathbf p,\theta} s$.  Also $s=q \cup r$
hence $\supp_\theta(r,s) \subseteq \text{ supp}_\theta(p,q)$ and as
$q \in \ap_{\mathbf y}(p)$ necessarily $p 
\le^{\ap}_{\mathbf p,\kappa} q$ 
hence $p \le^{\ap}_{\mathbf p,\theta} q$ hence by part (2A)
$\supp_\theta(p,q) \subseteq \supp_\theta(p_\alpha,q) \subseteq
u^{\mathbf y}_{\alpha_{\mathbf y}(q)} = u^{\mathbf y}_{\alpha_{\mathbf y}(p)}
= u^{\mathbf y}_\alpha$ but $u^{\mathbf y}_\alpha \subseteq u^{\mathbf
y}_\beta$ as $\alpha \le \beta$.  Together supp$_\theta(r,s) \subseteq
u_\beta$, and by the choice of $\beta$ clearly supp$_\theta(p_\beta,r)
\subseteq u_\beta$ hence supp$_\theta(p_\beta,s) \subseteq \text{
supp}_\theta(p_\beta,r) \cup \text{ supp}_\theta(r,s) \subseteq
u_\beta \cup u_\beta = u_\beta$.  As we have shown earlier that
$p_\beta \le^{\ap}_{\mathbf p,\theta} s$ it follows that $s \in
Q_{\mathbf y}$ and $\alpha_{\mathbf y}(s) \le \beta$.  But $r \le_{\mathbf
p} s$ hence by part (2) we know that $\beta = \alpha_{\mathbf y}(r) \le
\alpha_{\mathbf y}(s)$ so necessarily $\alpha_{\mathbf y}(s) =
\alpha_{\mathbf y}(r) = \beta$, i.e. $(*)$ holds.]

So $p_{\alpha_{\mathbf y}(s)} \le^{\ap}_{\mathbf p,\theta}
s$ and supp$_\theta(p_{\alpha_{\mathbf y}(s)},s) = 
\text{ supp}_\theta(p_\beta,s) \subseteq u_\beta = u_{\alpha_{\mathbf
y}(s)}$ so together $s \in Q_{\mathbf y}$, the first statement in the
conclusion. 

Also $q \le^{\pr}_{\mathbf y} s$, for this check
$(e)(\alpha)+(\beta)$ of Definition \ref{2B.33}(3); for clause
$(\alpha)$: $q \in Q_{\mathbf y}$ is assumed, $s \in Q_{\mathbf y}$ was
just proved; for clause $(\beta)$ ``$q \le^{\pr}_{\mathbf
p,\kappa} s$" was proved in the beginning of the proof; so the third
statement in the conclusion holds.

Lastly, we check that $s \in \ap_{\mathbf y}(r)$, for this we
have to check the two demands in \ref{2B.33}(3)(f), now ``$s \in
Q_{\mathbf y}$" was proved above, ``$r \le^{\ap}_{\mathbf p,\kappa}
s$" was proved in the beginning of the proof and ``supp$_\kappa(r,s) \subseteq
\text{ supp}_\theta(p_{\alpha_{\mathbf y}(r)},s)$" holds as
supp$_\kappa(r,s) \subseteq \text{ supp}_\theta(r,s) \subseteq \text{
supp}_\theta(p_{\alpha_{\mathbf y}(r)},s) = \text{
supp}_\theta(p_\beta,s) = \text{ supp}_\theta(p_{\alpha_{\mathbf
y}(s)},s)$ is as required.   
\end{PROOF}

\begin{claim}
\label{2B.35}  
1) Assume $\kappa < \theta$ are successive members of $\Theta_{\mathbf p}$ and 
$(\forall \alpha < \partial_\theta)(|\alpha|^{< \partial_\kappa} 
< \partial_\theta)$ and $\mathbf y$ is a reasonable 
$\mathbf p$-parameter, $\kappa = \kappa_{\mathbf y}$ hence $\theta_{\mathbf
y} = \theta$ and $\bar p_{\mathbf y}$ is
   $\le^{\pr}_\theta$-increasing (hence also
$\le^{\pr}_\kappa$-increasing) and $\gamma_{\mathbf y}$ is a
   successor or a limit ordinal of cofinality $\ge \partial_\theta$.
\Then \, $\mathbf Q_{\mathbf y}$ is a $(\partial^+_\theta,\partial_\theta,<
   \partial_\theta)$-forcing.

2) If in addition $\gamma_{\mathbf y} = \alpha_* +1$ \then
\[
p_{\alpha_*} \Vdash ``\name {\mathbf G}_{\mathbf Q} \cap
Q_{\mathbf y} \text{ is a subset of } \bbQ_{\mathbf y} 
\text{ generic over } \mathbf V".
\]
\end{claim}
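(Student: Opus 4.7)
The plan is to verify each of clauses (a)--(j) of Definition \ref{1c.15}(1) for $\bold Q_{\bold y}$ with parameters $(\partial^+_\theta,\partial_\theta,\xi)$ for arbitrary $\xi<\partial_\theta$, and then to deduce part (2) by a projection/factoring argument below $p_{\alpha_*}$. Clauses (a)--(d) are largely formal: (a), (b), (c) are built into Definition \ref{2B.33}; the crucial sub-clause $(d)(\gamma)^+$ is exactly Observation \ref{2B.34}(3), which supplies $r^+:=q^+\cup r$ as the required apure extension with $q^+\Vdash``r^+\in\name{\bold G}_{\bold Q}\Rightarrow r\in\name{\bold G}_{\bold Q}"$. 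Clause (i) is a cardinality count: every $r\in\text{ap}_{\bold y}(q)$ is determined by $r\restriction\text{supp}_\theta(p_{\alpha_{\bold y}(q)},q)$, a set of size $<\partial_\theta$ whose new domain has size $<\partial_\kappa$, so the count is $(<\partial_\theta)^{<\partial_\kappa}<\partial_\theta$ by the arithmetic hypothesis $|\alpha|^{<\partial_\kappa}<\partial_\theta$. Clause (j) follows from Claim \ref{2B.20}(m) applied to the pair $(q_*,r)$: the returned witness $(s,t)$ can, after truncating $t$ to $\text{supp}_\theta(p_{\alpha_{\bold y}(q_*)},q_*)$, be arranged so that $t\in\text{ap}_{\bold y}(q_*)$ and $q_*\le^{\text{pr}}_{\bold y} s$, giving the required $(q_*,r)$-witness.

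For completeness, clause (e), any $\le^{\text{pr}}_{\bold y}$-increasing sequence of length $\delta<\partial_\theta=\partial^\kappa$ (using \ref{2B.12}(3b)) has its union as a $\le^{\text{pr}}_\kappa$-lub in $Q$ by Claim \ref{2B.24}(1A); this lub remains in $Q_{\bold y}$ because $\alpha_{\bold y}$ is non-decreasing (Observation \ref{2B.34}(2)) and bounded below $\gamma_{\bold y}$ (using $\text{cf}(\gamma_{\bold y})\ge\partial_\theta$). For the c.c.c. of clause (f), given $\partial^+_\theta$-many conditions I first pigeonhole $\alpha_{\bold y}(q_i)\le\alpha_*<\gamma_{\bold y}$, then apply a $\Delta$-system to the ``new parts'' $\text{Dom}(q_i)\setminus\text{Dom}(p_{\alpha_{\bold y}(q_i)})$, each of size $<\partial_\theta$: the bound $(\partial_\theta)^{<\partial_\theta}=\partial_\theta$ on possible shapes and values forces a coincidence, and Claim \ref{2B.20}(g) turns this into a $\le^{\text{pr}}_\kappa$-common extension.

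The delicate clause is (g). I restrict to the stationary set $S:=\{\zeta<\partial_\theta:\text{cf}(\zeta)\ge\partial_\kappa\text{ and }\alpha_{\bold y}(q_\varepsilon)\text{ is eventually constant }\alpha_*\text{ for }\varepsilon<\zeta\}$, take $q^*_\zeta:=\bigcup_{\varepsilon<\zeta}q_\varepsilon$, which lies in $Q_{\bold y}$ by stabilization and is the $\le^{\text{pr}}_\kappa$-lub by \ref{2B.24}(1A). For exactness, given $r\in\text{ap}_{\bold y}(q^*_\zeta)$, the set $\text{supp}_\kappa(q^*_\zeta,r)$ has cardinality $<\partial_\kappa\le\text{cf}(\zeta)$ and sits inside $\bigcup_{\varepsilon<\zeta}\text{supp}_\theta(p_{\alpha_*},q_\varepsilon)$; by cofinality, a single $\varepsilon(*)<\zeta$ contains all the $E_\kappa$-classes involved, so that $r':=q_{\varepsilon(*)}\cup(r\setminus q^*_\zeta)$ belongs to $\text{ap}_{\bold y}(q_{\varepsilon(*)})$ and witnesses the forcing relation ``$q^*_\zeta,r'\in\name{\bold G}_{\bold Q}\Rightarrow r\in\name{\bold G}_{\bold Q}$''. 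Clause (h) is dispatched by a parallel $\Delta$-system on $\text{Dom}(p_\varepsilon)\setminus\text{Dom}(q_\varepsilon)$ (each of size $<\partial_\kappa$): the arithmetic hypothesis yields $\xi$-many indices on which the $p_\varepsilon$-data is simultaneously realizable in any generic above a sufficiently late $q_\zeta$.

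For part (2), the hypothesis $\gamma_{\bold y}=\alpha_*+1$ forces every $d\in Q_{\bold y}$ to have $\text{Dom}(d)\subseteq\text{Dom}(p_{\alpha_*})\cup u_{\alpha_*}$ with full $E_\theta$-classes of the new domain lying inside $u_{\alpha_*}$. The standard factoring argument then applies: below $p_{\alpha_*}$, conditions in $\bbQ_{\bold p}$ split into a ``$u_{\alpha_*}$-part'' that is exactly an element of $Q_{\bold y}$ and an independent ``remainder'' outside $u_{\alpha_*}$; given any $q\ge p_{\alpha_*}$ in $\bbQ_{\bold p}$ and a dense $D\subseteq Q_{\bold y}$, one projects $q$ to its $Q_{\bold y}$-part $q^\circ$, extends $q^\circ$ inside $Q_{\bold y}$ to some $d\in D$, and reassembles $d$ with the remainder of $q$ to obtain a common $\bbQ_{\bold p}$-extension (the key point being that new points of $d$ inside $u_{\alpha_*}$ lie outside $\text{Dom}(q)$ and so create no value conflicts). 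Thus $D$ is predense below $p_{\alpha_*}$ in $\bbQ_{\bold p}$, yielding genericity. The main obstacle in the whole argument is clause (g): the asymmetric interplay between the $\kappa$-level ap-structure and the $\theta$-level forcing structure---precisely codified in the condition ``$\text{supp}_\kappa(q,r)\subseteq\text{supp}_\theta(p_{\alpha_{\bold y}(q)},q)$'' of Definition \ref{2B.33}---is what forces the careful stationary choice combining $\alpha_{\bold y}$-stabilization with $\partial_\kappa$-cofinality of $\zeta$.
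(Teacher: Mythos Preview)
Your overall plan matches the paper's: verify clauses (a)--(j) of Definition~\ref{1c.15} one by one, then treat part~(2) separately. Most clauses are handled correctly and in the same spirit as the paper. Two points, however, are genuine gaps.

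\textbf{Clause (g).} Your set $S$ requires that $\langle\alpha_{\bold y}(q_\varepsilon):\varepsilon<\zeta\rangle$ be \emph{eventually constant}. But nothing prevents $\alpha_{\bold y}(q_\varepsilon)$ from being strictly increasing for all $\varepsilon<\partial_\theta$ (this is perfectly possible when $\gamma_{\bold y}$ is a limit of cofinality $\ge\partial_\theta$); in that case your $S$ is empty, hence not stationary. The paper does not need eventual constancy: for $\zeta$ with $\text{cf}(\zeta)=\partial_\kappa$ it sets $\alpha_*=\sup\{\alpha_{\bold y}(q_\varepsilon):\varepsilon<\zeta\}<\gamma_{\bold y}$ and uses the \emph{continuity} of $\bar p_{\bold y}$ and $\bar u_{\bold y}$ (clauses (b),(c) of $\circledast_1$ in \ref{2B.33}(2)) to get $p_{\alpha_*}=\bigcup_\varepsilon p_{\alpha_{\bold y}(q_\varepsilon)}$ and $u_{\alpha_*}=\bigcup_\varepsilon u_{\alpha_{\bold y}(q_\varepsilon)}$. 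Exactness then follows because $\text{supp}_\kappa(q^*_\zeta,r)$, of size $<\partial_\kappa=\text{cf}(\zeta)$, is absorbed into some $u_{\alpha_{\bold y}(q_{\varepsilon(*)})}$ by this union representation. Drop ``eventually constant'' and use the supremum plus continuity.

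\textbf{Part (2).} Your projection/reassembly is not the paper's argument, and as written it breaks at the step ``reassemble $d$ with the remainder of $q$''. You correctly observe that $d\cup q$ is a function (no value conflicts), but you do not verify $q\le d\cup q$ in $\bbQ_{\bold p}$. Clause (B)(b) of \ref{2B.16}(1) can fail: for $\sigma\in\Theta$ with $\sigma\le\kappa$, an $E_\sigma$-class may meet $\text{Dom}(q)\setminus(\text{Dom}(p_{\alpha_*})\cup u_{\alpha_*})$ while also receiving new points from $\text{Dom}(d)\setminus\text{Dom}(q^\circ)\subseteq u_{\alpha_*}$; the number of such growing classes is bounded only by $|\text{Dom}(d)\setminus\text{Dom}(p_{\alpha_*})|<\partial_\theta$, not by $\partial_\sigma$. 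The paper avoids this by arguing by contradiction and using the $\text{pr}/\text{ap}$ decomposition \ref{2B.20}(c): given $q_2\ge p_{\alpha_*}$ in the generic, take $r_2$ with $p_{\alpha_*}\le^{\text{ap}}_\theta r_2\le^{\text{pr}}_\theta q_2$ (so $r_2\in Q_{\bold y}$), extend $r_2$ inside $Q_{\bold y}$ to $r_3$ in the dense set, observe $r_2\le^{\text{ap}}_\theta r_3$ by \ref{2B.20}(k), and then \ref{2B.20}(f) applied to $r_2\le^{\text{pr}}_\theta q_2$ and $r_2\le^{\text{ap}}_\theta r_3$ yields the common extension $r_3\cup q_2$ with the required order relations built in. Replace your set-theoretic restriction by this $\text{pr}/\text{ap}$ factoring.
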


\begin{PROOF}{\ref{2B.35}}  
1) We should check for $\mathbf Q = \mathbf Q_{\mathbf y}$ (defined in
\ref{2B.33}) each of the clauses of Definition \ref{1c.15}.  Let
$p_\alpha = p^{\mathbf y}_\alpha,u_\alpha = u^{\mathbf p}_\alpha$.

\underline{Clause (a)}:  Trivial, just $\mathbf Q_{\mathbf y}$ 
has the right form, a quadruple.

\underline{Clause (b)}:  $(Q_{\mathbf y},\le_{\mathbf y})$ is a forcing notion.

Why?  By $\circledast_2(b)+(c)$ from \ref{2B.33}(3), i.e. $Q_{\mathbf
y}$ is a non-empty subset of $Q_{\mathbf p}$ because $\gamma_{\mathbf y} >
0$ so $p^{\mathbf y}_0 = p \in Q_{\mathbf y}$ and $\le_{\mathbf y}$ being
$\le_{\mathbf Q_{\mathbf p}} \rest Q_{\mathbf y}$ is a quasi order.

\underline{Clause (c)}:  $\le^{\pr}_{\mathbf y}$ is a quasi order on
$Q_{\mathbf y}$ and $p \le^{\pr}_{\mathbf y} q \Rightarrow p
\le_{\mathbf y} q \Rightarrow p \le_{\mathbf p} q$.

Why?  The first half holds because if $p_1 \le^{\pr}_{\mathbf y}
p_2 \le^{\pr}_{\mathbf y} p_3$ \then \,: we should check that
$p_1 \le^{\pr}_{\mathbf y} p_3$, i.e. clauses $(\alpha),(\beta)$
of $\circledast_2(e)$ of \ref{2B.33}(3) hold.  Now clause $(\alpha)$ is
obvious, for clause $(\beta)$ note $p_1 \le^{\pr}_{\mathbf p,\kappa} p_2 
\le^{\pr}_{\mathbf p,\kappa} p_3$ and
$\le^{\pr}_{\mathbf p,\kappa}$ is a partial order of $Q_{\mathbf
p}$, so $p_1 \le^{\pr}_{\mathbf p,\kappa} p_3$, and hence
$(\beta)$ holds.
 
The second part of clause (c) which says $p \le^{\pr}_{\mathbf y} q 
\Rightarrow p \le_{\mathbf y} q$ (recalling Claim
\ref{2B.20}(a)$(\beta)$) holds by the definition of
$\le_{\mathbf y},\le^{\pr}_{\mathbf y}$ in $\circledast_2(c),(e)$ 
of \ref{2B.33}(3).

\underline{Clause (d)$(\alpha)$}:  ap$_{\mathbf y}$ is a function with domain
$Q_{\mathbf y}$.

Why?  By $\circledast_2(f)$ of \ref{2B.33}(3).

\underline{Clause (d)$(\beta)$}:  if $q \in Q_{\mathbf y}$ then
$q \in \ap_{\mathbf y}(q) \subseteq Q_{\mathbf y}$.

Why?  By $\circledast_2(f)$ of \ref{2B.33}(3) trivially
ap$_{\mathbf y}(q) \subseteq Q_{\mathbf y}$.  Also we can check that $q
\in \ap_{\mathbf y}(q):q \in Q_{\mathbf y}$ by an assumption and
$q \le^{\ap}_\kappa q$ as $\le^{\ap}_\kappa$ is a quasi
order on $Q_{\mathbf p}$ and ``supp$_\kappa(q,q) \subseteq \text{
supp}_\theta(p_{\alpha_{\mathbf y}(q)},q)$" trivially because
supp$_\kappa(q,q) = \emptyset$.

\underline{Clause (d)$(\gamma)$}:  if $r \in \ap_{\mathbf y}(q)$ and $q
\in Q_{\mathbf y}$ then $r,q$ are compatible in $\bbQ_{\mathbf y}$.

Why?  As $r \in \ap_{\mathbf y}(q) \Rightarrow 
(q \le^{\ap}_\kappa r \wedge \{r,q\} \subseteq Q_{\mathbf y})
\Rightarrow q \le_{\mathbf y} r$.

\underline{Clause (d)$(\gamma)^+$}:  if $r \in \ap_{\mathbf y}(q)$ and $q
\le^{\pr}_{\mathbf y} q^+$ then $q^+,r$ are compatible in 
$(Q_{\mathbf y},\le_{\mathbf y})$, moreover there is $r^+ \in 
\ap_{\mathbf Q_{\mathbf y}}(q^+)$ such that $q^+ \Vdash_{\bbQ_{\mathbf y}}
``r^+ \in \name{\mathbf G}_{\bbQ_{\mathbf y}} \Rightarrow r \in 
\name{\mathbf G}_{\bbQ_{\mathbf y}}"$. 

This follows from \ref{2B.34}(3), by defining $s = r^+ = r \cup q^+$, 
which gives more.

\underline{Clause (e)}:  $(Q_{\mathbf y},\le^{\pr}_{\mathbf y})$ is 
$(< \partial_\theta)$-complete, recalling $\partial_\theta = \partial^\kappa$.

So assume $\langle q_\varepsilon:\varepsilon < \delta\rangle$ is
$\le^{\pr}_{\mathbf y}$-increasing and $\delta$ is a limit ordinal
$< \partial_\theta$; now $(Q_{\mathbf p},\le^{\pr}_\kappa)$ is $(<
\partial^\kappa)$-complete by Claim \ref{2B.24}(1) and $\langle
q_\varepsilon:\varepsilon < \delta\rangle$ is also
$\le^{\pr}_{\mathbf p,\kappa}$-increasing by clause
$\circledast_2(e)(\beta)$ of Definition \ref{2B.33}(3) hence $q_\delta :=
\cup\{q_\varepsilon:\varepsilon <\delta\}$ is a 
$\le^{\pr}_{\mathbf p,\kappa}$-lub of the sequence 
by \ref{2B.24}(1).  Now $\langle \alpha_\varepsilon := 
\alpha_{\mathbf y}(q_\varepsilon):\varepsilon <\delta\rangle$ is an 
$\le$-increasing sequence of ordinals $< \gamma_{\mathbf y}$ by Observation
\ref{2B.34}(2).  

Also by an assumption of \ref{2B.35}(1), the ordinal $\gamma_{\mathbf
y}$ is a successor ordinal or limit of cofinality $\ge
\partial_\theta$ but then $\delta < \text{ cf}
(\gamma_{\mathbf y})$.  So in both cases $\alpha_* = \sup\{\alpha_\varepsilon:
\varepsilon < \delta\}$ is an ordinal $< \gamma_{\mathbf y}$.  But $\bar
p^{\mathbf y}$ is $\le^{\pr}_{\mathbf p,\kappa}$-increasing continuous hence
$p_{\alpha_*} = \cup\{p_{\alpha_\varepsilon}:\varepsilon < \delta\}$
and similarly $u_{\alpha_*} = \cup\{u_{\alpha_\varepsilon}:\varepsilon
< \delta\}$.  Now easily $q_\delta$ is a
$\le^{\ap}_\theta$-extension of $p^{\mathbf y}_{\alpha_*}$, 
and supp$_\theta(p^{\mathbf y}_{\alpha_*},q_\delta) \subseteq 
\cup\{\text{supp}_\theta(p_{\alpha_{\mathbf y}(q_\varepsilon)},
q_\varepsilon):\varepsilon
< \delta\} \subseteq \cup\{u_{\alpha_\varepsilon}:\varepsilon <
\delta\} = u_{\alpha_*}$ which has cardinality $< \partial_\theta$ set each
hence $q_\delta \in Q_{\mathbf y}$.  Easily $q_\delta$ is as required.

\underline{Clause (f)}:  $(Q_{\mathbf y},\le^{\pr}_{\mathbf y})$ satisfies
the $\partial^+_\theta$-c.c.

Why?  Let $q_\varepsilon \in Q_{\mathbf y}$ for $\varepsilon <
\partial^+_\theta$, so $\alpha_\varepsilon := \alpha_{\mathbf
y}(q_\varepsilon)$ is well defined and without loss of generality $\langle
\alpha_\varepsilon:\varepsilon < \partial^+_\theta\rangle$ is constant
or increasing; also 
$p_{\alpha_\varepsilon} \le^{\ap}_\theta q_\varepsilon$ 
so by Definition \ref{2B.16} the set
supp$_\theta(p_{\alpha_{\mathbf y}(q_\varepsilon)},q_\varepsilon)$ 
has cardinality $< \partial_\theta$, so by the $\Delta$-system lemma, 
as in the proof of \ref{2B.24}(3) there are $\varepsilon(1) < \varepsilon(2) <
\partial^+_\theta$ such that:

\begin{enumerate}
\item[$(*)$]   if $i_1 \in \text{ supp}_\theta(p_{\alpha_{\varepsilon(1)}},
q_{\varepsilon(1)})$ and $i_2 \in \text{ supp}_\theta
(p_{\alpha_{\varepsilon(2)}},q_{\varepsilon(2)})$ then
\begin{enumerate}
\item[$(\alpha)$]    if $i_1 = i_2$ then $q_{\varepsilon(1)}(i) = 
q_{\varepsilon(2)}(i)$

\item[$(\beta)$]    if $i_1 E_\kappa i_2$ then $i_1,i_2 \in
\text{ supp}_\theta(p_{\alpha_{\varepsilon(1)}},q_{\varepsilon(1)})
\cap \text{ supp}_\theta(p_{\alpha_{\varepsilon(2)}},q_{\varepsilon(2)})$.
\end{enumerate}
\end{enumerate}

So $\varepsilon(1) < \varepsilon(2),\alpha_{\varepsilon(1)} \le
\alpha_{\varepsilon(2)},p_{\alpha_{\varepsilon(1)}}
\le^{\ap}_\theta q_{\varepsilon(1)},p_{\alpha_{\varepsilon(2)}}
\le^{\ap}_\theta q_{\varepsilon(2)}$.

Hence $q := q_{\varepsilon(1)} \cup q_{\varepsilon(2)}$ belongs to
$Q_{\mathbf p}$ is a $\le^{\ap}_\theta$-lub of
$\{q_{\varepsilon(1)},q_{\varepsilon(2)}\}$ and
$q_{\alpha_{\varepsilon(2)}} \le^{\ap}_\theta q$ hence $q \in
Q_{\mathbf y}$.  Also if $i \in \dom(q) \backslash \text{
Dom}(p_{\varepsilon(\ell)})$ then $i/E_\kappa$ is disjoint to
$\dom(p_{\varepsilon(\ell)})$ by $(*)(\beta)$; this implies
$p_{\varepsilon(\ell)} \le^{\pr}_\kappa q$ which means
$p_{\varepsilon(\ell)} \le^{\pr}_{\mathbf y}
q$ by \ref{2B.33}(3)(e), for $\ell=1,2$ so 
$q_{\varepsilon(1)},q_{\varepsilon(2)}$ are indeed commpatible in
$(Q_{\mathbf y},\le^{\pr}_{\mathbf y})$.

\underline{Clause (g)}:  if $\bar q = \langle q_\varepsilon:\varepsilon <
\partial_\theta\rangle$ is $\le^{\pr}_{\mathbf y}$-increasing,
\then \, for stationarily many limit $\zeta < \partial_\theta$ the sequence
$\bar q \restriction \zeta$ has an exact $\le^{\pr}_{\mathbf y}$-upper
bound (recalling that $\partial_\theta$ here stands for $\theta$ in
Definition \ref{1c.15}).

Why?  We prove more, that if cf$(\zeta) = \partial_\kappa$ and
$\langle q_\varepsilon:\varepsilon < \zeta\rangle$ is
$\le^{\pr}_{\mathbf y}$-increasing then the union $q =
\cup\{q_\varepsilon:\varepsilon < \zeta\}$ is an exact
$\le^{\pr}_{\mathbf y}$-upper bound. This suffices as
$\partial_\kappa < \partial_\theta$ and both are regular.
Now by \ref{2B.34}(2) the sequence $\langle \alpha_{\mathbf y}(q_\varepsilon):
\varepsilon < \zeta\rangle$ is $\le$-increasing
hence $\langle u_{\alpha_{\mathbf y}(q_\varepsilon)}:\varepsilon < \zeta\rangle$
is $\subseteq$-increasing and letting $\alpha_* =
\cup\{\alpha_{\mathbf y}(q_\varepsilon):\varepsilon < \zeta\}$
we have $\alpha_* < \gamma_{\mathbf y}$ as $\gamma_{\mathbf y}$ is a
successor ordinal or limit of cofinality $\ge \partial_\theta$; hence
$u_{\alpha_*} = \cup\{u_{\alpha_{\mathbf y}(q_\varepsilon)}:\varepsilon
<\zeta\}$, see \ref{2B.33}(2)(c).  

By the proof of 
clause (e) which we have proved above, clearly $q \in Q_{\mathbf y}$
and is a $\le^{\pr}_{\mathbf y}$-upper bound of $\langle
q_\varepsilon:\varepsilon < \zeta\rangle$.  But what about ``exact"?
we should check Definition \ref{1c.15}(2).  So assume $p \in \ap_{\mathbf
y}(q)$ and we should prove that for some $\varepsilon < \zeta$ and
$p' \in \ap_{\mathbf y}(q_\varepsilon)$ we have
$\Vdash_{\bbQ_{\mathbf y}}$ ``if $q,p' \in \name{\mathbf G}_{\bbQ_{\mathbf
y}}$ \then \, $p \in \name{\mathbf G}_{\bbQ_{\mathbf y}}$".

Note that $q \le^{\ap}_{\mathbf p,\kappa} p$ and supp$_\theta(q,p)
\subseteq u_{\alpha_*}$ by the definition of ap$_{\mathbf
y}(q)$, hence $u := \text{ supp}_\kappa(q,p)$ is a subset of
supp$_\theta(q,p) \subseteq u^{\mathbf y}_{\alpha_*}$ of cardinality $<
\partial_\kappa$.  As $\langle 
u^{\mathbf y}_{\alpha_\varepsilon}:\varepsilon < \zeta\rangle$ is
$\subseteq$-increasing with union $u^{\mathbf y}_{\alpha_*}$ necessarily
for some $\varepsilon < \zeta$ we have $u \subseteq
u_{\alpha_\varepsilon}$.  Let $p' = p \rest \dom
(p_\varepsilon)$, and check (as in earlier cases).

\underline{Clause (h)}:  if $\langle q_\varepsilon \colon \varepsilon <
\partial_\theta\rangle$ is $\le^{\pr}_{\mathbf y}$-increasing and
$r_\varepsilon \in \ap_{\mathbf y}(q_\varepsilon)$ for
$\varepsilon < \partial_\theta$ and $\xi < \partial_\theta$ 
\then \, for some $\zeta <
\partial_\theta$ we have $q_\zeta \Vdash_{\bbQ_{\mathbf y}}$ ``if $r_\zeta \in
\name{\mathbf G}_{\bbQ_{\mathbf y}}$ then $\xi \le
\text{ otp}\{\varepsilon < \zeta:p_\varepsilon \in 
\name{\mathbf G}_{\bbQ_{\mathbf y}}\}"$.  

This follows from \ref{2B.24}(3).

\underline{Clause (i)}:  ap$_{\mathbf y}(q)$ has cardinality $<
\partial_\theta$. 

Should be clear as $\alpha < \partial_\theta \Rightarrow |\alpha|^{<
\partial_\kappa} < \partial_\theta$ by an assumption of the claim and
$\alpha < \partial_\theta \Rightarrow |u_\alpha| <\partial_\theta$ 
(see \ref{2B.33}(3)(f)) and the definition of ap$_{\mathbf y}(q)$ in
$\circledast_2(e)$ of \ref{2B.33}(3).

Let $\alpha = \alpha_{\mathbf y}(q)$ so $\alpha <  \gamma_{\mathbf y}$ and
$|\ap_{\mathbf y}(q)| = |\{s:q \le^{\ap}_\kappa s$ and 
supp$_\kappa(q,s) \subseteq \text{ sup}_\theta(p_{\alpha_{\mathbf
y}(q)},q)\}| \le |\text{ supp}_\theta(p_{\alpha_{\mathbf y}(q)},q)|^{<
\kappa}$ but $|\text{supp}_\theta(p_{\alpha_{\mathbf y}(q)},q)| <
\partial_\theta$ and so by an assumption of the claim
$|\text{supp}_\theta(p_{\alpha_{\mathbf y}(q)},q)|^{< \kappa} <
\partial_\theta$ so we are done.

\underline{Clause (j)}:  Let $q_* \le_{\mathbf y} r$, so $\alpha \le \beta$
where $\alpha := \alpha_{\mathbf y}(q_*),\beta := \alpha_{\mathbf y}(r)$.  

By \ref{2B.20}(c) we can find a pair $(q,p)$ such that $q_*
\le^{\pr}_{\mathbf p,\kappa} q \le^{\ap}_{\mathbf p,\kappa}
r,q_* \le^{\ap}_{\mathbf p,\kappa} p \le^{\pr}_{\mathbf
p,\kappa} r,r = p \cup q$.  Now check.

2) Let $Q'' = \{p:p_{\alpha_*} \le^{\ap}_\theta p\}$.  So clearly
$Q'' \subseteq Q_{\mathbf y}$ and then $(\forall p \in Q_{\mathbf y})
(\exists q \in Q'')[p \le_{\mathbf y} q]$, by clause (f) of Claim 
\ref{2B.20}, i.e. $Q''$  is a dense subset of $Q_{\mathbf y}$ 
(by $\le_{\bbQ_{\mathbf y}} = \le_{\bbQ_{\mathbf p}} 
\restriction Q_{\mathbf y}$).  Really $q_1 \in Q'' \wedge q_1 
\le q_2 \in Q_{\mathbf y} \Rightarrow q_2 \in Q''$ by \ref{2B.34}(2).

Suppose ${\cI}$ is a dense open subset of $\bbQ_{\mathbf y}$ 
so ${\cI}_1 := {\cI} \cap Q''$ is dense in $\bbQ_{\mathbf y}$.

Let $\mathbf G$ be a subset of $\bbQ$ generic over $\mathbf V$ such that
$p_{\alpha_*}$ belongs to it. 
If ${\cI} \cap \mathbf G \ne \emptyset$ we are done, otherwise some
$q_1 \in \mathbf G$ is incompatible (in $\bbQ$) with every $q \in
{\cI}$.  As $\mathbf G$ is directed there is $q_2 \in \mathbf G$ such
that $p_{\alpha_*} \le q_2 \wedge q_1 \le q_2$.  As $p_{\alpha_*} \le
q_2$ by clause (c) of Claim \ref{2B.20} there is a 
$r_2 \in \bbQ$ such that 
$p_{\alpha_*} \le^{\ap}_\theta r_2 \le^{\pr}_\theta q_2$.
So $r_2 \in Q''$ hence by the assumption on ${\cI}$
there is $r_3 \in {\cI}$ such that $r_2 \le r_3$.  Now as $r_3 \in
{\cI}$ necessarily $p_{\alpha_*} \le^{\ap}_\theta r_3$ and
of course $p_{\alpha_*} \le r_2 \le r_3$ hence by clause (k) of Claim
\ref{2B.20} we have $r_2 \le^{\ap}_\theta r_3$.  Recalling $r_2
\le^{\pr}_\kappa q_2$ it follows by clause (f) of \ref{2B.20}
that there is $q_3 \in \mathbf Q$
such that $q_2 \le q_3 \wedge r_3 \le q_3$ hence $q_3 \Vdash
``\name{\mathbf G} \cap {\cI} \ne \emptyset"$ and
$q_1 \le q_3$, contradicting the choice of $q_1$. 
\end{PROOF}

\begin{claim}
\label{2B.36}  
If $\kappa \in \Theta \backslash
\{\mu\},\theta = \text{\rm min}(\Theta \backslash \kappa^+)$ and
$\theta = \mu \Rightarrow \partial_\theta < \mu$
and $(\forall \alpha < \partial_\theta)[|\alpha|^{< \partial_\kappa} <
\partial_\theta]$ and $\xi < \partial_\theta,\sigma < \partial_\theta$
\then \, $\Vdash_{\bbQ_{\mathbf p}} ``\partial^+_\theta \rightarrow
(\xi,(\xi;\xi)_\sigma)^2"$ . 

%https://www.overleaf.com/project/5df7869938109e0001f98360
\end{claim}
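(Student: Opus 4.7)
Given $p^{*} \in \bbQ_{\bold p}$ and a $\bbQ_{\bold p}$-name $\name{\bold c}$ for a coloring $[\partial^+_\theta]^2 \to \sigma$, the plan is to find some $q \ge p^{*}$ forcing the desired partition relation, which by density suffices. Following the strategy described in the introduction, I will reduce to Claim \ref{1c.25} applied to a ``smaller'' forcing $\bold Q_{\bold y}$ for a carefully chosen reasonable $\bold p$-parameter $\bold y$. Taking $\chi := \partial_\theta$ we have $\chi^+ = \partial^+_\theta$ and $\chi = \chi^{<\chi}$ by Hypothesis \ref{2B.48}$(c)(\beta)$; the roles of ``$\xi_*$'' and ``$\kappa$'' in Claim \ref{1c.25} are played by $\xi$ and $\sigma$, both $<\partial_\theta$ by assumption. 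So once $\bold Q_{\bold y}$ is established to be a $(\partial^+_\theta,\partial_\theta,<\partial_\theta)$-forcing, Claim \ref{1c.25} immediately yields $\Vdash_{\bold Q_{\bold y}} ``\partial^+_\theta \to (\xi,(\xi;\xi)_\sigma)^2$''.

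To build $\bold y$, take $\kappa_{\bold y} = \kappa$ (so $\theta_{\bold y} = \theta$) and $\gamma_{\bold y} = \alpha_{*}+1$, and inductively construct a $\le^{\text{pr}}_\theta$-increasing continuous sequence $\bar p^{\bold y} = \langle p_\alpha : \alpha \le \alpha_{*}\rangle$ with $p_0 \ge p^{*}$, together with a $\subseteq$-increasing continuous $\bar u^{\bold y}$. At each successor stage $\alpha+1$ I take the next pair $\{\delta_1,\delta_2\}$ from a fixed bookkeeping and apply Claim \ref{2B.24}(4) (with $\theta$ in place of its ``$\kappa$'' and $A = \{(\delta_1,\delta_2)\}$, noting $1 < \partial^\theta$) to obtain $p_\alpha \le^{\text{pr}}_\theta p_{\alpha+1}$ and a predense family $\cI'_\alpha \subseteq \text{ap}_{\bold p,\theta}(p_{\alpha+1})$ of cardinality $\le \partial_\theta$ deciding $\name{\bold c}\{\delta_1,\delta_2\}$. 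I then set $u_{\alpha+1}$ to absorb $\text{supp}_\theta(p_\alpha,p_{\alpha+1})$ together with $\bigcup\{\text{supp}_\theta(p_{\alpha+1},r) : r \in \cI'_\alpha\}$, adjusting $p_{\alpha+1}$ (still $\le^{\text{pr}}_\theta$-above $p_\alpha$) so that $u_{\alpha+1} \subseteq \bigcup\{i/E_\kappa : i \in \text{Dom}(p_{\alpha+1})\}$ and $|u_{\alpha+1}| \le \partial^\kappa$, with limits handled by continuity as in the proof of Claim \ref{2B.35}. By construction every $\cI'_\alpha$ lies in $Q_{\bold y}$, and Claim \ref{2B.35}(1)-(2) give that $\bold Q_{\bold y}$ is a $(\partial^+_\theta,\partial_\theta,<\partial_\theta)$-forcing and that $p_{\alpha_{*}} \Vdash_{\bbQ_{\bold p}} ``\name{\bold G} \cap Q_{\bold y}$ is $\bbQ_{\bold y}$-generic over $\bold V$''. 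Since each $\name{\bold c}\{\delta_1,\delta_2\}$ is then decided by $\bold G \cap Q_{\bold y}$ below $p_{\alpha_{*}}$, the coloring descends to a $\bbQ_{\bold y}$-name there, and the partition witnesses produced by Claim \ref{1c.25} inside $\bold V[\bold G \cap Q_{\bold y}]$ automatically witness the partition relation in $\bold V[\bold G]$, as required.

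The main technical obstacle I anticipate is the joint induction on $\bar p$ and $\bar u$ under the tight bound $|u_\alpha| \le \partial^\kappa = \partial_\theta$: in principle one would like to handle all $\partial^+_\theta$ pairs inside a single $\bold y$, but with only a $\partial_\theta$-budget per $u_\alpha$ the naive enumeration overflows at the top stage $\alpha_{*}$. The way around this will be to exploit the fact that the proof of Claim \ref{1c.25} operates inside elementary submodels of size $\partial_\theta$ and actually only uses $< \partial_\theta$ pairs' worth of information, so it is enough to predecide $\name{\bold c}$ on a suitable ``dense enough'' family of pairs -- arranging such a bookkeeping together with the continual extension of $p_\alpha$ to represent all the $E_\kappa$-classes making up $u_\alpha$, and verifying exactness at limits in the style of the proof of Claim \ref{2B.35}(g), is where the bulk of the detailed work sits.
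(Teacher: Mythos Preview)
Your overall strategy is right, but there is a real gap in the order of operations, and the fix you propose in the last paragraph does not work. You commit up front to $\gamma_{\bold y}=\alpha_*+1$ for some fixed $\alpha_*$, and then face the overflow problem: there are $\partial^+_\theta$ pairs to decide but only a $\partial_\theta$-budget per $u_\alpha$. Your suggested workaround---that Claim~\ref{1c.25} ``only uses $<\partial_\theta$ pairs' worth of information,'' so one can predecide $\name{\bold c}$ on a sparse family---is not correct. The proof of Claim~\ref{1c.25} needs $\name{\bold c}$ to be a genuine $\bbQ_{\bold y}$-name for a coloring of \emph{all} of $[\partial^+_\theta]^2$; the choice of $\delta_*$, the formulas $\varphi$, and the back-and-forth between $(\delta_*,q)$ and $(\delta,q')$ range over arbitrary pairs, not ones you can enumerate in advance.

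The resolution in the paper is to reverse your order: first take $\gamma_{\bold y}=\partial^+_\theta$, a limit of cofinality $\partial^+_\theta\ge\partial_\theta$, so Claim~\ref{2B.35}(1) still applies. At stage $j+1$ handle \emph{all} pairs $\{i,j\}$ with $i<j$ simultaneously via Claim~\ref{2B.24}(4) (there are $|j|\le\partial_\theta$ of them), so each $u_{j+1}$ stays of size $\le\partial_\theta$ and there is no overflow. Now $\name{\bold c}$ really does descend to a $\bbQ_{\bold y}$-name $\name{\bold c}'$, and Claim~\ref{1c.25} gives $\bbQ_{\bold y}$-names $\langle\name\alpha_\varepsilon,\name\beta_\varepsilon:\varepsilon<\xi\rangle$ for the witnesses. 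Only \emph{then} do you use the $\partial^+_\theta$-c.c.\ of $\bbQ_{\bold y}$ to find $\alpha_*<\partial^+_\theta$ absorbing the (at most $\partial_\theta$-sized) antichains deciding these names, and apply Claim~\ref{2B.35}(2) to $\bold y\restriction(\alpha_*+1)$ to transfer the result back to $\bbQ_{\bold p}$. In short: build the long $\bold y$ first, apply \ref{1c.25}, and truncate afterwards---not the other way around.
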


\begin{PROOF}{\ref{2B.36}}
Let $\sigma < \partial_\theta$ and $\xi <
\partial_\theta$ and we shall prove $\Vdash_{\bbQ_{\mathbf p}}
``\partial^+_\theta \rightarrow (\xi,(\xi;\xi)_\sigma)^2"$.  Toward
this assume $\name{\mathbf c}$ is a $\bbQ_{\mathbf p}$-name and $q^*
\in \bbQ_{\mathbf p}$ forces that $\name{\mathbf c}$ is a function
from $[\partial^+_\theta]^2$ to $1 + \sigma$.  Now we shall apply Claim
\ref{2B.24}(4) with $\theta$ here standing for $\kappa$ there.  
We choose $(p_i,u_i)$ by induction on $i < \partial^+_\theta$ such that:

\begin{enumerate}
\item[$\circledast_1$]   $(a) \quad p_i \in \bbQ_{\mathbf p}$ is
$\le^{\pr}_\theta$-increasing continuous with $i$ and $p_0 = q^*$ (so if $\theta = \mu,$ then $\bigwedge_{i} p_{i} = q^{\ast}$ because $\leq_{\theta}^{\pr}$ is equality), 

\item[${{}}$]   $(b) \quad$ for every $i < j < \partial^+_\theta$ the set
${\cI}_{i,j}$ is predense above $p_{j+1}$ where
\[
{\cI}_{i,j} = \{r:p_{j+1} \le^{\ap}_\theta r \text{ and } r
\text{ forces a value to } \name{\mathbf c}\{i,j\}\},
\]

\item[${{}}$]   $(c) \quad$ moreover ${\cI}_{i,j}$ has a subset
${\cI}'_{i,j}$ of cardinality $\le \partial_\theta$ which is

\hskip25pt predense over $p_{j+1}$,

\item[${{}}$]   $(d) \quad u_i$ is $\subseteq$-increasing continuous
and

\hskip25pt $u_i \subseteq \cup\{\alpha/E_\kappa:\alpha \in \text{
Dom}(p_i)\}$ and $|u_i| \le \partial_\theta$ for $i < \partial^+_\theta$,

\item[${{}}$]   $(e) \quad \alpha \in u_i \Rightarrow
(\alpha/E_\kappa) \subseteq u_i$,

\item[${{}}$]   $(f) \quad q \in {\cI}'_{i,j} 
\Rightarrow \text{\rm supp}_\kappa (p_{j+1},q) \subseteq u_{j+1}$.
\end{enumerate}

[Why is this possible?  For $i=0$ let $p_0 = q^*$, for $i$ limit let
$u_i = \cup\{u_j:j < i\}$ and $i < \partial^+_\theta$, and we like to
applly \ref{2B.24}(1) with $\kappa$ there standing for $\theta$ here,
so if $\partial^+_\theta \le \partial^\theta$ this is fine, otherwise
by \ref{2B.12}(2)(h) necessarily $\theta = \mu \wedge \partial_\theta
= \mu = 2^\theta$ contradicting an assumption.  Lastly, if $i = \iota +1$
then we have to deal with $\name{\mathbf c}\{\zeta,\iota\}$ for $\zeta <
\iota$, i.e. with $\le \partial_\theta$ names of ordinals $< \sigma$.
So we apply \ref{2B.24}(4) with $(p_\iota,\iota,\langle 
\name{\mathbf c}(j,\iota):j < \iota\rangle,\theta)$ here standing for $(p,A,\name
f,\kappa)$ there and get $p_i,\langle \cI_{j,\iota},\cI'_{j,\iota}:j <
\iota\rangle$ here standing for $q,\langle \cI_{q,\name f,a},
\cI'_{q,\name f,a}:a \in A \rangle$ there.  So the relevant parts of
clauses (a),(b),(c) hold.  Define $u_i$ as in clauses (d),(e),(f)
possible as $|\cI'_{j,\iota}| \le \partial_\theta,r \in \cI'_{j,\iota}
\Rightarrow |\text{supp}_\kappa(p_i,q)| \le \partial_\kappa <
\partial_\theta$.  So we are done carrying the induction.]

Let $\bar p = \langle p_i:i < \partial^+_\theta\rangle$ and $\bar u =
\langle u_i:i < \partial^+_\theta\rangle$.

So this will help to translate the problem from the forcing $\bbQ$ to 
the forcing $\bbQ_{\mathbf y}$.

We define $\mathbf y = (\kappa,\langle p_\alpha:\alpha <
\partial^+_\theta \rangle,\langle u_\alpha:\alpha <
\partial^+_\theta\rangle)$, so:

\begin{enumerate}
\item[$\circledast_2$]   $\mathbf y$ is a reasonable $\mathbf p$-parameter.
\end{enumerate}

[Why?  Check, see Definition \ref{2B.33}(2).]

\begin{enumerate}
\item[$\circledast_3$]   $\mathbf Q_{\mathbf y}$ is a
$(\partial^+_\theta,\partial_\theta,< \partial_\theta)$-forcing.
\end{enumerate}

[Why?  By Claim \ref{2B.35}(1).]

Now for $i<j<\partial^+_\theta$,

\begin{enumerate}
\item[$(*)$]   $(a) \quad {\cI}_{i,j}$ is predense in $\bbQ_{\mathbf y}$,

\item[${{}}$]   $(b) \quad$ if $q_1,q_2 \in {\cI}_{i,j}$ or just
$\in \bbQ_{\mathbf y}$, \then \, $q_1,q_2$ are compatible in 
$\bbQ_{\mathbf p}$ 

\hskip25pt iff they are compatible in $\bbQ_{\mathbf y}$.
\end{enumerate}

[Why?  The first clause (a) holds by our definitions.  For the second
clause (b), assume $q_1,q_2 \in \bbQ_{\mathbf y}$.  If they are
compatible in $\bbQ_{\mathbf y}$, then clearly they are compatible in
$\bbQ_{\mathbf p}$.  To show the other direction, let $q$ be $q_1
\cup q_2$.  If $q \in \mathbb Q_{\mathbf y}$ we are done, since $q_1,q_2
\le_{\mathbf y} q$.  So let us prove that $q \in \bbQ_{\mathbf y}$.
Denote $\alpha_1 = \alpha_{\mathbf y}(q_1),\alpha_2 = \alpha_{\mathbf
y}(q_2)$ and without loss of generality $\alpha_1 \le \alpha_2$.  So
$p_{\alpha_1} \le^{\ap}_\theta q_1,p_{\alpha_2}
\le^{\ap}_\theta q_2$ and also $p_{\alpha_1}
\le^{\pr}_\theta p_{\alpha_2}$, and it follows from
\ref{2B.20}(f)$(\delta)$ that $p_{\alpha_2} \le^{\ap}_\theta
q$.  Moreover, supp$_\theta(p_{\alpha_2},q) \subseteq \text{
supp}_\theta(p_{\alpha_1},q_1) \cup \text{
supp}_\theta(p_{\alpha_2},q_2) \subseteq u_{\alpha_1} \cup
u_{\alpha_2} = u_{\alpha_2}$.  Together, $q \in \bbQ_{\mathbf y}$ and
we are done.]

So we can define a $\bbQ_{\mathbf y}$-name $\name{\mathbf c}'$ 
as follows; for $q \in \bbQ_{\mathbf y}$ 

\[
q \Vdash_{\bbQ_{\mathbf y}} ``\name{\mathbf c}'\{i,j\} = t"
\text{ iff } q \Vdash_{\bbQ_{\mathbf p}} 
``\name{\mathbf c}\{i,j\} = t".
\]

So by $(*)$
\[
\Vdash_{\bbQ_{\mathbf y}} ``\name{\mathbf c}':
[\partial^+_\theta]^2 \rightarrow \sigma".
\]

Now by claim \ref{1c.25} for some $\bbQ_{\mathbf y}$-name and a sequence
$\langle \name \alpha_\varepsilon,\name \beta_\varepsilon:
\varepsilon < \xi\rangle$ we have

\begin{equation*}
\begin{array}{clcr}
\Vdash_{\bbQ_{\mathbf y}} ``&\text{the sequence }\langle 
\name\alpha_\varepsilon,\name \beta_\varepsilon:\varepsilon < \xi\rangle 
\text{ is as required in Definition \ref{0.3}} \\
  &(\text{for } \partial^+_\theta \rightarrow (\xi,(\xi;\xi)_\sigma)^2)".
\end{array}
\end{equation*}

So for each $\varepsilon < \xi$ there is a maximal antichain 
${\cJ}_\varepsilon$ of $\bbQ_{\mathbf y}$ of elements forcing a 
value to $(\name \alpha_\varepsilon,\name \beta_\varepsilon)$ by 
$\bbQ_{\mathbf y}$.

But $\bbQ_{\mathbf y}$ satisfies the $\partial^+_\theta$-c.c. 
so $|{\cJ}_\varepsilon| \le \partial_\theta$
 hence for some $\alpha_* < \partial^+_\theta$ we have:

\begin{enumerate}
    \item[$(*)$]   ${\cJ}_\varepsilon \subseteq
    \{q:(\exists \alpha \le \alpha_*)(p_\alpha \le^{\ap}_{\mathbf Q}
    q)\}$ for any $\varepsilon < \xi$. 
\end{enumerate}

Recall that (by \ref{2B.35})

\begin{enumerate}
\item[$(*)$]   $p_{\alpha_*} \Vdash ``\name{\mathbf G}_{\mathbf Q} 
\cap Q_{\mathbf y \restriction (\alpha_*+1)}$ is a subset
of $Q_{\mathbf y \restriction (\alpha_* +1)}$ generic over $\mathbf V$".
\end{enumerate}

so we are done.  
\end{PROOF}

\begin{remark}
\label{2B.41}  
1) We can replace the exponent 2 by $n \ge 2$, so
getting suitable polarized partition relations; we intend to continue
elsewhere.

2) For exact such results provable in ZFC see \cite{EHMR} and \cite{Sh:95}.
\end{remark}

%%%%%%%%%%%%%%%%%%%%%%%%%%%%%%%%%%%%%%%%%%%%%%%%%%%
% Section 3
\newpage

\section{Simultaneous Partition Relations and General topology}\label{3} 

Recall (to simplify results we define $\hL^+(X) > \lambda > \cf(\lambda)$ using an elaborate definition for regulars).

\begin{definition}\label{top}  
    Let $X$ be a topological space. We define: 
    
    \begin{enumerate}
        \item[$(a)$]   the \emph{density} of $X$ is $d(X) \coloneqq \min \{|S| \colon S
        \subseteq X$ and $S$ is dense in $X\},$
        
        \item[$(b)$]   the \emph{hereditary density }  of $X$ is:
        
        $\hd(X) = \sup\{\lambda:X$ has a subspace of density $\ge \lambda\},$
        
        \item[$(c)$]   $\hd^+(X) = \widehat{\hd}(X) = 
        \sup\{\lambda^+:X$ has a subspace of density $\ge \lambda\},$
        
        \item[$(d)$]   $X$ is not $\lambda$-Lindel\"of if there is a family
        $\{\cU_\alpha:\alpha < \lambda\}$ of open subsets of $X$ whose union is
        $X$ but $w \subseteq \lambda \wedge |w| < \lambda \Rightarrow
        \cup\{\cU_\alpha:\alpha \in w\} \ne X,$
        
        \item[$(e)$]   the hereditarily Lindel\"of number of $X$ is:
        
        $\hL(X) = \widehat{\text{hL}} (X) = \sup\{\lambda$ \,: there are $x_\alpha
        \in X$ and ${\cU}_\alpha \in \text{ open}(X)$ for $\alpha <
        \lambda$, such that $x_\alpha \in {\cU}_\alpha$ and $\alpha < \beta
        \Rightarrow x_\beta \notin {\cU}_\alpha\},$
        
        \item[$(f)$]  $\hL^+(X) = \sup\{\lambda^+$: there are $x_\alpha \in
        X,\cU_\alpha$ for $\alpha < \lambda$ as above$\},$
        
        \item[$(g)$]   the spread of $X$ is $s(X) = \sup\{\lambda:X$ has a
        discrete subset with $\lambda$ points$\}$,
        
        \item[(h)] $s^+(X) = \hat s(X) =
        \sup\{\lambda^+:X$ has a discrete subspace with $\lambda$ points$\}$.
    \end{enumerate}
\end{definition}

Our starting point was the following problem (\ref{0z.1}) of Juhasz-Shelah \cite{Sh:899}.

\begin{problem}  
    Assume $\aleph_1 < \lambda < 2^{\aleph_0}$.  Does there  exist a hereditarily Lindel\"of Hausdorff regular space of density $\lambda$?
    
    We answer negatively by a consistency result but then look again at related problems on hereditary density, Lindel\"ofness and spread; 
    our main theorem is \ref{1t.41} getting consistency for all
    cardinals.
    
    We also try to clarify the relationships of this and related  partition relations to $\chi \rightarrow [\theta]^2_{2  \kappa,2}$, recalling that by \cite{Sh:276}, consistently,
    e.g. $2^{\aleph_0} \rightarrow [\aleph_1]^2_{n,2}$ for $n < \omega$.  
    Now, see \ref{1t.33} below, $2^{\aleph_0} \rightarrow [\aleph_1]_{2n,2}$ 
    implies $2^{\aleph_0} \rightarrow
    (\aleph_1,(\aleph_1;\aleph_1)_n)^2$ and by
    \ref{1t.37} it implies $\gamma < \aleph_1 \Rightarrow 2^{\aleph_0}
    \rightarrow (\gamma)^2_n$, see on the consistency of this
    Baumgartner-Hajnal in \cite{BaHa73}, and Galvin in \cite{Gal75}. 
    
    On cardinal invariants in general topology, in particular, $s(X)$,{\rm hd}$(X)$,{\rm hL}$(X)$, see Juhasz \cite{Ju80}; in  particular recall the obvious.
\end{problem}

\begin{observation}
    For a Hausdorff topological space $X$:
    
    \begin{enumerate}
        \item[$(a)$]  $\hL(X) \ge s(X),$
        
        \item[$(b)$]  $\hd(X) \ge s(X),$
        
        \item[$(c)$]  for $\lambda$ regular, $X$ is hereditarily $\lambda$-Lindel\"of (i.e. every subspace is $\lambda$-Lindelof) iff  there are $x_\alpha \in X$ and  $\cU_\alpha$ for $\alpha < \lambda$ as in (e) of Definition \ref{top},
        
        \item[$(d)$]  we choose the second statement in (c) as the definition of ``$X$ is hereditarily $\lambda$-Lindel\"of" then \ref{1t.14}, \ref{1t.21} below hold also for $\lambda$ singular.
    \end{enumerate}
\end{observation}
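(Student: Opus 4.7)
For clauses (a) and (b), the plan is to observe that a discrete subspace realizes both lower bounds simultaneously. Let $Y \subseteq X$ be discrete of cardinality $\lambda$ and enumerate $Y = \{x_\alpha : \alpha < \lambda\}$. For (b): every point of $Y$ is isolated in the subspace topology, so the only dense subset of $Y$ is $Y$ itself, giving $d(Y) = \lambda$ and hence $\mathrm{hd}(X) \ge \lambda$. For (a): because $Y$ is discrete in its subspace topology, for each $\alpha$ there is an open $\cU_\alpha \subseteq X$ with $\cU_\alpha \cap Y = \{x_\alpha\}$; then $x_\alpha \in \cU_\alpha$ and $\beta \ne \alpha \Rightarrow x_\beta \notin \cU_\alpha$, which in particular witnesses the asymmetric condition $\alpha < \beta \Rightarrow x_\beta \notin \cU_\alpha$ from clause (e) of Definition \ref{top}, so $\mathrm{hL}(X) \ge \lambda$.

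For the ``if'' direction of (c), suppose $\langle (x_\alpha, \cU_\alpha) : \alpha < \lambda\rangle$ is as in clause (e) of Definition \ref{top} and set $Y := \{x_\alpha : \alpha < \lambda\}$. The family $\{\cU_\alpha \cap Y : \alpha < \lambda\}$ covers $Y$ because $x_\alpha \in \cU_\alpha$. For any $w \subseteq \lambda$ with $|w| < \lambda$, regularity of $\lambda$ gives some $\beta \in \lambda \setminus \sup(w)$; then every $\alpha \in w$ satisfies $\alpha < \beta$, hence $x_\beta \notin \cU_\alpha$, so $\{\cU_\alpha : \alpha \in w\}$ does not cover $x_\beta \in Y$. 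Thus $Y$ witnesses that $X$ is not hereditarily $\lambda$-Lindelöf.

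For the converse, suppose $Y \subseteq X$ is not $\lambda$-Lindelöf: there exist open $V_\alpha \subseteq X$ for $\alpha < \lambda$ with $Y \subseteq \bigcup_{\alpha < \lambda} V_\alpha$ but no subfamily of size $< \lambda$ covers $Y$. I would choose $x_\alpha \in Y$ and $\beta_\alpha < \lambda$ by transfinite recursion on $\alpha < \lambda$: at stage $\alpha$, the family $\{V_{\beta_\gamma} : \gamma < \alpha\}$ has cardinality $|\alpha| < \lambda$ so fails to cover $Y$; pick $x_\alpha \in Y \setminus \bigcup_{\gamma < \alpha} V_{\beta_\gamma}$, and then pick $\beta_\alpha$ with $x_\alpha \in V_{\beta_\alpha}$. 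Setting $\cU_\alpha := V_{\beta_\alpha}$, we obtain $x_\alpha \in \cU_\alpha$ and $\gamma < \alpha \Rightarrow x_\alpha \notin \cU_\gamma$, which after relabeling $(\gamma,\alpha) \mapsto (\alpha,\beta)$ is precisely the condition $\alpha < \beta \Rightarrow x_\beta \notin \cU_\alpha$ required in (e) of Definition \ref{top}.

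There is no real obstacle; the one conceptual point worth noting is that the ``if'' direction genuinely uses regularity of $\lambda$ (to ensure $\sup(w) < \lambda$ whenever $|w| < \lambda$), whereas the recursive construction in the converse needs only that $|\alpha| < \lambda$ for each $\alpha < \lambda$. This asymmetry is exactly what makes the convention in clause (d) sensible: taking the second statement of (c) as the definition of hereditary $\lambda$-Lindelöfness still yields the inductive construction from a failure of covering, so the earlier results \ref{1t.14} and \ref{1t.21} survive when $\lambda$ is singular.
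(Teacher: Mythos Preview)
The paper does not actually prove this observation: it is introduced with ``in particular recall the obvious'' and a reference to Juh\'asz \cite{Ju80}, so there is no argument in the paper to compare against. Your proof supplies the standard arguments and is essentially correct.

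Two small remarks. First, in the ``if'' direction of (c) you pick $\beta \in \lambda \setminus \sup(w)$ and then claim $\alpha < \beta$ for every $\alpha \in w$; but if $w$ has a maximum then $\sup(w) \in w$ and $\beta = \sup(w)$ fails. You want $\beta > \sup(w)$, which is still available since $\sup(w)+1 < \lambda$. Second, you have silently (and correctly) read clause (c) as ``$X$ is \emph{not} hereditarily $\lambda$-Lindel\"of iff \ldots''; the printed statement appears to be missing the negation, and your interpretation is the intended one.
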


\begin{conclusion}\label{3c.87} 
    Assume $\lambda = \lambda^{< \lambda} < 
    \mu = \mu^{< \mu}$ and GCH holds in $[\lambda,\mu]$, so 
    $\lambda \le \theta = \text{ cf}(\theta) \le
    \mu \Rightarrow \theta = \theta^{< \theta}$ and $\{\lambda,\mu\}
    \subseteq \Theta \subseteq \text{ Reg } \cap [\lambda,\mu]$ and  
    for $\theta \in \Theta$ we let $\partial_\theta = \theta$
     and let $\mathbf p =
    (\lambda,\mu,\Theta,\langle \partial_\theta:\theta \in \Theta\rangle)$.
    \Then \,
    
    \begin{enumerate}
        \item[$(a)$]   $\mathbf p$ is as required in Hypothesis \ref{2B.48},
        
        \item[$(b)$]  the forcing notion $\bbQ_{\mathbf p}$ satisfies:
        \begin{enumerate}
        \item[$(\alpha)$]   $\bbQ_{\mathbf p}$ is of cardinality $\mu$
        
        \item[$(\beta)$]   $\bbQ_{\mathbf p}$ is $(< \lambda)$-complete
        (hence no new sequence of length $< \lambda$ is added),
        
        \item[$(\gamma)$]   no cardinal is collapsed, no cofinality is
        changed,
        
        \item[$(\delta)$]   in $\mathbf V^{\bbQ_{\mathbf p}}$ we have
        $\lambda = \lambda^{< \lambda},2^\lambda = \mu$ and $\chi
        \notin[\lambda,\mu) \Rightarrow 2^\chi = (2^\chi)^{\mathbf V},$
        
        \item[$(\varepsilon)$]  if $\kappa < \theta$ are successive 
        members of $\Theta$ and $\theta$ is not a successor of singular (or
        just $\theta = \chi^+ \Rightarrow \chi^{< \kappa} = \chi$) \then \,
        $\lambda \rightarrow (\xi,(\xi;\xi)_\sigma)^2$ for any $\xi,\sigma <
        \theta$.
        \end{enumerate}
    \end{enumerate}
\end{conclusion}

\begin{PROOF}{\ref{3c.87}}
By \ref{2B.29} and \ref{2B.36}.
\end{PROOF}

The topological consequences from \ref{3c.87} in \ref{3c.89} hold by
\ref{1t.14} and \ref{1t.21} below, that is
\begin{conclusion}
\label{3c.89}  
We can add in \ref{3c.87} that 

\begin{enumerate}
\item[$(b)(\zeta)$]   if $\theta \in [\lambda,\mu) \cap \Theta$ 
is the successor
of the regular $\kappa$ \then \, for any Hausdorff regular topological
space $X$, we have $\hd(X) \ge \theta^+ \Rightarrow s^+(X) \ge \theta$
and also $\hL(X) \ge \theta^+ \Rightarrow s^+(X) \ge \theta$ so recalling
$\theta = \kappa^+$ we have $\hd(X) \ge \theta^+ \Rightarrow \text{
hL}(X) \ge s(X) \ge \kappa,\text{ hL}(X) \ge \theta^+ \Rightarrow
\text{ hd}(X) \ge s(X) \ge \kappa$

\item[$\,\,\,(\eta)$]   if $\theta \in (\lambda,\mu]$ is a limit
cardinal then $\hd(X) \ge \theta \vee \text{ hL}(X) \ge \theta
\Rightarrow s(X) \ge \theta$.
\end{enumerate}
\end{conclusion}

\begin{observation}
\label{1t.9} 
1) If $\lambda_1 \rightarrow (\xi_1;\xi_1)^2_{\kappa_1}$
and $\lambda_2 \ge \lambda_1,\xi_2 \le \xi_1,\kappa_2 \le \kappa_1$
\then \, $\lambda_2 \rightarrow (\xi_2;\xi_2)^2_{\kappa_2}$.

1A) Similarly for $\lambda \rightarrow (\xi,(\xi;\xi)_\kappa)^2$.

2) If $\lambda \rightarrow (\xi,(\xi;\xi)_\kappa)^2$ \then \,
$\lambda \rightarrow (\xi;\xi)^2_{1 +\kappa}$.

3) $\lambda \rightarrow (\xi + \xi;\xi + \xi)^2_\kappa$ 
implies $(\lambda,\lambda)
\rightarrow (\xi,\xi)^{1,1}_\kappa$; the polarized partition.
\end{observation}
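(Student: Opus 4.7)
All three parts should follow from direct combinatorial manipulations, and I would handle them separately. For parts (1) and (1A), my plan is pure monotonicity. Given a coloring $\bold c:[\lambda_2]^2\to\kappa_2$, I would restrict it to $[\lambda_1]^2$ and regard its codomain as a subset of $\kappa_1$. Applying $\lambda_1\to(\xi_1;\xi_1)^2_{\kappa_1}$ (resp.\ the variant in (1A)) then yields $\varepsilon$ and a sequence $\langle\alpha_i,\beta_i:i<\xi_1\rangle$ inside $\lambda_1\subseteq\lambda_2$; since $\varepsilon$ is attained by $\bold c$, it must actually lie in $\kappa_2$, and truncating to $i<\xi_2$ gives the required witness. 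The case distinction $\varepsilon=0$ versus $\varepsilon\ge 1$ in the $(\xi,(\xi;\xi)_\kappa)^2$ formulation is preserved under restriction, so (1A) is handled identically.

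For part (2), I would apply the hypothesis $\lambda\to(\xi,(\xi;\xi)_\kappa)^2$ to a given $\bold c:[\lambda]^2\to 1+\kappa$, producing $\varepsilon<\kappa$ and $\langle\alpha_i,\beta_i:i<\xi\rangle$ as in Definition~\ref{0z.4}(2). If $\varepsilon\ge 1$, clause $(c)_1$ directly supplies the half-graph condition $\bold c\{\alpha_i,\beta_j\}=\varepsilon$ for $i\le j$, which is exactly what $\lambda\to(\xi;\xi)^2_{1+\kappa}$ demands, taking the same $\varepsilon$ and sequences. If $\varepsilon=0$, clause $(c)_0$ gives $\bold c\{\alpha_i,\alpha_j\}=0$ for all $i<j<\xi$; I would then re-pair by setting $\alpha'_i:=\alpha_{2\cdot i}$ and $\beta'_i:=\alpha_{2\cdot i+1}$. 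This is legitimate when $\xi$ is an infinite cardinal (the intended reading), since then $2\cdot i+1<\xi$ for every $i<\xi$. One checks $\alpha'_i<\beta'_i<\alpha'_j$ for $i<j$ and $\bold c\{\alpha'_i,\beta'_j\}=0$ whenever $2\cdot i<2\cdot j+1$, i.e.\ for all $i\le j$.

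For part (3), given $\bold c:\lambda\times\lambda\to\kappa$ I would define the auxiliary symmetric coloring $\bold c':[\lambda]^2\to\kappa$ by $\bold c'\{\gamma,\delta\}:=\bold c(\gamma,\delta)$ for $\gamma<\delta$. Applying $\lambda\to(\xi+\xi;\xi+\xi)^2_\kappa$ to $\bold c'$ would yield $\varepsilon<\kappa$ and $\langle\alpha_i,\beta_i:i<\xi+\xi\rangle$ with $\bold c'\{\alpha_i,\beta_j\}=\varepsilon$ for $i\le j$. I would then split the index set by taking $A:=\{\alpha_i:i<\xi\}$ (the first block of $\alpha$'s) and $B:=\{\beta_{\xi+j}:j<\xi\}$ (the last block of $\beta$'s), each of order type $\xi$. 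For $\alpha_i\in A$ and $\beta_{\xi+j}\in B$ one has $i<\xi\le\xi+j$, hence $\alpha_i<\beta_{\xi+j}$, and therefore $\bold c(\alpha_i,\beta_{\xi+j})=\bold c'\{\alpha_i,\beta_{\xi+j}\}=\varepsilon$, which is precisely the constancy on $A\times B$ required by $(\lambda,\lambda)\to(\xi,\xi)^{1,1}_\kappa$.

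The only slightly subtle point is the re-pairing in part (2) when $\varepsilon=0$: the hypothesis delivers only $\xi$ elements forming a monochromatic clique, whereas the half-graph conclusion naively seems to want $\xi+\xi$. This obstacle dissolves as soon as one observes that a clique of order type $\xi$ contains a half-graph of length $\xi$ provided $\xi$ is closed under $i\mapsto 2\cdot i+1$, which holds for all infinite cardinals.
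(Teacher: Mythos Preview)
The paper states this as an observation without proof, so there is no authorial argument to compare against; your write-up is what a reader would have to supply. Parts (1), (1A), and (3) are handled correctly and in the obvious way. Part (2) is also correct, and you are right to flag the re-pairing step: from a $0$-homogeneous set $\{\alpha_i:i<\xi\}$ one can extract a half-graph of length $\xi$ only when $\xi$ is closed under $i\mapsto 2i+1$, which holds for infinite cardinals (and more generally for additively indecomposable ordinals). In the paper's applications $\xi$ ranges over ordinals below some regular cardinal, but the observation is only ever invoked in contexts where cardinal-length witnesses suffice, so your caveat is appropriate rather than a gap.
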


\begin{claim}
\label{1t.14}
$X$ has a discrete subspace of size $\mu$, i.e. $s^+(X) > \mu$ 
(hence is not hereditarily $\mu$-Lindel\"of) \when \,:

\begin{enumerate}
\item[$(a)$]   $\lambda \rightarrow (\mu,(\mu;\mu))^2$

\item[$(b)$]   $X$ is a Hausdorff, moreover a regular $(=T_3)$ 
topological space

\item[$(c)$]   $X$ has a subspace of density $\ge \lambda$.
\end{enumerate}
\end{claim}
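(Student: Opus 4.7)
The plan is to reduce the problem to the partition relation $\lambda \rightarrow (\mu,(\mu;\mu))^2$ by coding the topology on a suitable left-separated $\lambda$-sequence. Since $d(X) \ge \lambda$, we can recursively pick a left-separated sequence $\langle x_\alpha : \alpha < \lambda\rangle$, i.e.\ $x_\alpha \notin \overline{\{x_\beta : \beta < \alpha\}}$ for every $\alpha$; this is possible because otherwise the partial set $\{x_\beta : \beta < \alpha\}$ would already be dense for some $\alpha < \lambda$. Using that $X$ is regular ($T_3$), for each $\alpha$ choose an open neighborhood $U_\alpha$ of $x_\alpha$ such that $\overline{U_\alpha} \cap \{x_\beta : \beta < \alpha\} = \emptyset$; in particular for all $\gamma < \alpha$ we have $x_\gamma \notin \overline{U_\alpha}$.

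Next, define $\bold c : [\lambda]^2 \to 2$ by $\bold c\{\gamma,\delta\} = 0$ if $x_\delta \notin U_\gamma$ and $\bold c\{\gamma,\delta\} = 1$ otherwise (where $\gamma < \delta$), and apply clause (a), i.e., $\lambda \rightarrow (\mu,(\mu;\mu))^2$ as in Definition \ref{0z.4}(2) with $\kappa = 1$. This yields $\varepsilon \in \{0,1\}$ and ordinals $\alpha_i < \beta_i < \alpha_j$ for $i < j < \mu$ satisfying either $(c)_0$ or $(c)_1$.

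In the easy case $\varepsilon = 0$, we have $\bold c\{\alpha_i,\alpha_j\} = 0$ for $i < j$, so $x_{\alpha_j} \notin U_{\alpha_i}$; combined with left-separation ($\alpha_j < \alpha_i \Rightarrow x_{\alpha_j} \notin U_{\alpha_i}$) and $x_{\alpha_i} \in U_{\alpha_i}$, the set $\{x_{\alpha_i} : i < \mu\}$ is discrete, witnessed by the $U_{\alpha_i}$, and we are done.

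The main obstacle is the case $\varepsilon = 1$: here $x_{\beta_j} \in U_{\alpha_i}$ for every $i \le j$, which a priori puts many of our candidate points inside each $U_{\alpha_i}$. The idea is to witness discreteness of $\{x_{\beta_i} : i < \mu\}$ by differences of the form $V_i := U_{\alpha_i} \setminus \overline{U_{\alpha_{i+1}}}$; this uses the closed set $\overline{U_{\alpha_{i+1}}}$ available because $X$ is $T_3$. Note $x_{\beta_i} \in U_{\alpha_i}$ (by $\varepsilon = 1$ with $i = j$) and $\beta_i < \alpha_{i+1}$ gives $x_{\beta_i} \notin \overline{U_{\alpha_{i+1}}}$ by our choice of the $U_\alpha$'s, so $x_{\beta_i} \in V_i$. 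For $j > i$ we have $i+1 \le j$, whence $x_{\beta_j} \in U_{\alpha_{i+1}} \subseteq \overline{U_{\alpha_{i+1}}}$, so $x_{\beta_j} \notin V_i$. For $j < i$ we have $\beta_j < \alpha_{j+1} \le \alpha_i$, so $x_{\beta_j} \notin \overline{U_{\alpha_i}} \supseteq U_{\alpha_i}$, again giving $x_{\beta_j} \notin V_i$. Thus $\{x_{\beta_i} : i < \mu\}$ is discrete with open witnesses $V_i$, establishing $s^+(X) > \mu$.
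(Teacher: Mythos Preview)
Your proof is correct and follows essentially the same approach as the paper: build a left-separated $\lambda$-sequence, use regularity to choose open neighborhoods missing earlier points, color pairs by whether the later point lies in the earlier neighborhood, and in the half-graph case witness discreteness by sets of the form ``neighborhood minus a closure.'' Your version is slightly more streamlined than the paper's --- you choose a single $U_\alpha$ with $\overline{U_\alpha}$ already disjoint from the earlier points (the paper uses two nested sets $u^2_\alpha \subseteq u^1_\alpha$), and in Case $\varepsilon=1$ you use $V_i = U_{\alpha_i}\setminus\overline{U_{\alpha_{i+1}}}$ directly, whereas the paper thins to even indices and bases the witness on $u^2_{\beta_{2\varepsilon}}$ --- but these are cosmetic differences, not a genuinely different route.
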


\begin{remark}
\label{top.1}  
The proofs of \ref{1t.14}, \ref{1t.21} are similar to older proofs.
\end{remark}

\begin{PROOF}{\ref{1t.14}}
$X$ has a subspace $Y$ with density $\ge \lambda$, by clause
(c) of the assumption.  We choose $x_\alpha,C_\alpha$ by induction
on $\alpha < \lambda$ such that

\begin{enumerate}
\item[$\circledast$]   $(\alpha) \quad x_\alpha \in Y,$

\item[${{}}$]   $(\beta) \quad C_\alpha =$ the closure of 
$\{x_\beta:\beta < \alpha\},$

\item[${{}}$]   $(\gamma) \quad x_\alpha \notin C_\alpha$.
\end{enumerate}

This is possible as $Y$ has density $\ge \lambda$.

Let $u^1_\alpha$ be an open neighborhood of $x_\alpha$ disjoint to
$C_\alpha$.

Let $u^2_\alpha$ be an open neighborhood of $x_\alpha$ whose closure,
$c  \ell(u^2_\alpha)$ is $\subseteq u^1_\alpha$.  Why does it exist?  As $X$
is a regular $(= T_3)$ space.

We define $\mathbf c:[\lambda]^2 \rightarrow \{0,1\}$ as follows: 

\begin{enumerate}
\item[$(*)$]   if $\alpha < \beta$ then $\mathbf c\{\alpha,\beta\} = 1$ 
iff $x_\beta \in u^2_\alpha$.
\end{enumerate}

By the assumption $\lambda \rightarrow (\mu,(\mu;\mu))^2$ at least
one of the following cases occurs.

\underline{Case 1}:  There is an increasing sequence $\langle
\alpha_\varepsilon:\varepsilon < \mu\rangle$ of ordinals $< \lambda$ 
such that $\varepsilon < \zeta < \mu 
\Rightarrow \mathbf c\{\alpha_\varepsilon,\alpha_\zeta\} = 0$.

This means that $\varepsilon < \zeta < \mu \Rightarrow
x_{\alpha_\zeta} \notin u^2_{\alpha_\varepsilon}$.  But if
$\varepsilon < \zeta < \mu$ then $u^2_{\alpha_\zeta}$ is an open
neighborhood of $x_{\alpha_\zeta}$ included in
$u^1_{\alpha_\zeta}$ which is disjoint to $C_{\alpha_\zeta}$ and
$x_{\alpha_\varepsilon} \in C_{\alpha_\zeta}$ so $x_{\alpha_\varepsilon}
\notin u^2_{\alpha_\zeta}$.

Lastly, $x_{\alpha_\varepsilon} \in u^2_{\alpha_\varepsilon}$ by the
choice of $u^2_{\alpha_\varepsilon}$.  Together we are done,
i.e.
$\langle(x_{\alpha_\varepsilon},u^2_{\alpha_\varepsilon}):\varepsilon <
\mu\rangle$ is as required.

\underline{Case 2}:  There is a sequence
$\langle(\alpha_\varepsilon,\beta_\varepsilon):\varepsilon <
\mu\rangle$ such that:

\begin{enumerate}
\item[$(*)_1$]   $\varepsilon < \zeta < \mu \Rightarrow
\alpha_\varepsilon < \beta_\varepsilon < \alpha_\zeta < \lambda.$

\item[$(*)_2$]   $\varepsilon < \zeta \Rightarrow \mathbf
c\{\alpha_\varepsilon,\beta_\zeta\} = 1$, really $\varepsilon \le \zeta$
suffice.
\end{enumerate}

So,

\begin{enumerate}
\item[$(*)_3$]   $\varepsilon < \zeta \Rightarrow x_{\beta_\zeta} \in
u^2_{\alpha_\varepsilon}$.
\end{enumerate}
but now for every $\varepsilon < \mu$ let

\begin{enumerate}
\item[$(*)_4$]   $y_\varepsilon := x_{\beta_{2 \varepsilon}}$ and
$u^3_\varepsilon := u^2_{\beta_{2 \varepsilon}}
\backslash c \ell(u^2_{\alpha_{2 \varepsilon +1}})$.
\end{enumerate}

So,

\begin{enumerate}
\item[$(a)$]   $u^3_\varepsilon = u^2_{\beta_{2 \varepsilon}} \backslash c
\ell(u^2_{\alpha_{2 \varepsilon +1}})$ is open (as open minus closed).

\item[$(b)$]   $y_\varepsilon \in u^3_\varepsilon$.
\end{enumerate}

[Why?  Recall $y_\varepsilon = x_{\beta_{2 \varepsilon}}$ belongs to
$u^2_{\beta_{2 \varepsilon}}$ (by the choice of
$u^2_{\beta_{2 \varepsilon}}$) and not to
$u^1_{\alpha_{2 \varepsilon +1}}$ (as $u^1_{\alpha_{2 \varepsilon +1}}$ 
is disjoint to $C_{\alpha_{2 \varepsilon+1}}$ while $x_{\beta_{2
\varepsilon}} \in C_{\alpha_{2 \varepsilon +1}})$ hence not 
to $c \ell(u^2_{\alpha_{2 \varepsilon +1}})$ being a subset 
of $u^1_{\alpha_{2 \varepsilon +1}}$.  Together $y_\varepsilon$
belongs to $u^2_{\beta_{2 \varepsilon}} \backslash 
c \ell(u^2_{\alpha_{2 \varepsilon+1}}) = u^3_\varepsilon$.]

\begin{enumerate} 
\item[$(c)$]   if $\varepsilon < \zeta < \mu$ then $y_\zeta \notin
u^3_\varepsilon$.
\end{enumerate}

[Why?  Now $y_\zeta = x_{\beta_{2 \zeta}}$ belongs to $u^2_{\alpha_{2
\varepsilon +1}}$ by $(*)_3$ as $2 \varepsilon + 1 < 2 \zeta$ which
follows from $\varepsilon < \zeta$ hence $y_\zeta$ belongs to 
$c\ell(u^2_{\alpha_{2 \varepsilon +1}})$ hence $y_\zeta \notin 
u^3_\varepsilon$ by the definition of $u^3_\varepsilon$.]

\begin{enumerate}
\item[$(d)$]   if $\zeta < \varepsilon < \mu$ then $y_\zeta \notin
u^3_\varepsilon$.
\end{enumerate}

[Why?  As $u^3_\varepsilon \subseteq u^2_{\beta_{2 \varepsilon}}$ and
the latter is disjoint to $C_{\beta_{2 \varepsilon}}$ to which
$x_{\beta_{2 \zeta}} = y_\zeta$ belongs.]

Together $\langle(y_\varepsilon,u^3_\varepsilon):\varepsilon <
\mu\rangle$ exemplifies that we are done.
\end{PROOF}

\begin{claim}
\label{1t.21}  
$X$ has a discrete subspace of size $\mu$ \when \,:

\begin{enumerate}
\item[$(a)$]   $\lambda \rightarrow (\mu,(\mu;\mu))^2,$

\item[$(b)$]  $X$ is a Hausdorff moreover a regular $(= T_3)$ topological space,

\item[$(c)$]   {\rm hL}$^+(X) > \lambda$, i.e. if $\lambda$ is a
regular cardinal this means that $X$ is not hereditarily
$\lambda$-Lindel\"of
\end{enumerate}
\end{claim}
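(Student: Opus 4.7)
My plan is to mimic the proof of Claim \ref{1t.14}, but using the witnesses for $\text{hL}^+(X) > \lambda$ in place of the density witnesses, and to exploit a convenient one-sided separation already built into the hereditary Lindel\"of data. Namely, by hypothesis (c) (reading Definition \ref{top}(f)) we can choose $x_\alpha \in X$ and open sets $\cU_\alpha$ for $\alpha < \lambda$ such that $x_\alpha \in \cU_\alpha$ and $\alpha < \beta \Rightarrow x_\beta \notin \cU_\alpha$. Using regularity of $X$, I shrink each $\cU_\alpha$ to an open neighborhood $u^2_\alpha$ of $x_\alpha$ with $c\ell(u^2_\alpha) \subseteq \cU_\alpha$. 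Note the \emph{built-in} separation: whenever $\alpha < \beta$, we have $x_\beta \notin c\ell(u^2_\alpha)$, since $c\ell(u^2_\alpha) \subseteq \cU_\alpha$ and $x_\beta \notin \cU_\alpha$.

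Next I define a coloring $\bold c \colon [\lambda]^2 \to \{0,1\}$ by $\bold c\{\alpha,\beta\} = 1$ iff $x_\alpha \in u^2_\beta$ (for $\alpha < \beta$); this is the ``dual'' of the coloring used in \ref{1t.14}. Applying $\lambda \to (\mu,(\mu;\mu))^2$ from hypothesis (a), I am in one of two cases. In the homogeneous case, I get an increasing sequence $\langle \alpha_\varepsilon : \varepsilon < \mu\rangle$ with $x_{\alpha_\varepsilon} \notin u^2_{\alpha_\zeta}$ for $\varepsilon < \zeta$; combined with the built-in separation (which handles $\varepsilon > \zeta$) and the fact that $x_{\alpha_\varepsilon} \in u^2_{\alpha_\varepsilon}$, the pairs $\langle(x_{\alpha_\varepsilon}, u^2_{\alpha_\varepsilon}) : \varepsilon < \mu\rangle$ directly witness a discrete subspace of size $\mu$.

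The half-graph case requires a little more work, paralleling the end of the proof of \ref{1t.14}. I obtain $\langle(\alpha_\varepsilon,\beta_\varepsilon) : \varepsilon < \mu\rangle$ with $\alpha_\varepsilon < \beta_\varepsilon < \alpha_\zeta$ for $\varepsilon < \zeta$ and $x_{\alpha_\varepsilon} \in u^2_{\beta_\zeta}$ whenever $\varepsilon \le \zeta$. I then set
\[
y_\varepsilon := x_{\alpha_{2\varepsilon+1}}, \qquad v_\varepsilon := u^2_{\beta_{2\varepsilon+1}} \setminus c\ell(u^2_{\beta_{2\varepsilon}}),
\]
which is open as a difference of an open set and a closed set. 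The point $y_\varepsilon$ lies in $v_\varepsilon$: membership in $u^2_{\beta_{2\varepsilon+1}}$ follows from the half-graph (with index $2\varepsilon+1 \le 2\varepsilon+1$), while non-membership in $c\ell(u^2_{\beta_{2\varepsilon}})$ follows from the built-in separation, since $\beta_{2\varepsilon} < \alpha_{2\varepsilon+1}$.

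Finally, I verify $y_\zeta \notin v_\varepsilon$ for $\zeta \neq \varepsilon$ by splitting on the order of $\zeta,\varepsilon$. For $\zeta < \varepsilon$, the half-graph gives $x_{\alpha_{2\zeta+1}} \in u^2_{\beta_{2\varepsilon}} \subseteq c\ell(u^2_{\beta_{2\varepsilon}})$, excluding it from $v_\varepsilon$; here the essential point is that $2\zeta+1 < 2\varepsilon$ still falls under the ``$\le$'' clause of the half-graph. For $\zeta > \varepsilon$, the built-in separation gives $x_{\alpha_{2\zeta+1}} \notin \cU_{\beta_{2\varepsilon+1}} \supseteq u^2_{\beta_{2\varepsilon+1}}$, again excluding it. Thus $\langle(y_\varepsilon, v_\varepsilon) : \varepsilon < \mu\rangle$ witnesses a discrete subspace of size $\mu$. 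The main obstacle is the bookkeeping in the half-graph case — in particular, choosing the indices $2\varepsilon+1$ and $2\varepsilon$ so that both directions work under the weak ``$\le$'' form of the half-graph — but this is essentially the same doubling trick used in the proof of \ref{1t.14}.
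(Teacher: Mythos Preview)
Your proof is correct and follows essentially the same approach as the paper: the same choice of witnesses $x_\alpha, u^1_\alpha$ from $\text{hL}^+(X)>\lambda$, the same shrinking to $u^2_\alpha$ via regularity, the same ``dual'' coloring $\bold c\{\alpha,\beta\}=1 \Leftrightarrow x_\alpha\in u^2_\beta$, and the same doubling trick in the half-graph case. The only difference is cosmetic bookkeeping in Case~2: the paper sets $y_\varepsilon = x_{\alpha_{2\varepsilon}}$ and $u^3_\varepsilon = u^2_{\alpha_{2\varepsilon}} \setminus c\ell(u^2_{\beta_{2\varepsilon+1}})$, while you use $y_\varepsilon = x_{\alpha_{2\varepsilon+1}}$ and $v_\varepsilon = u^2_{\beta_{2\varepsilon+1}} \setminus c\ell(u^2_{\beta_{2\varepsilon}})$; your index choice is arguably cleaner since it visibly respects the $\varepsilon\le\zeta$ form of the half-graph relation.
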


\begin{PROOF}{\ref{1t.21}}
Similar to \ref{1t.14}.  We choose
$\langle(x_\alpha,u^1_\alpha):\alpha < \lambda\rangle$ such that
$u^1_\alpha$ is an open subset of $X,x_\alpha \in u^1_\alpha$ and
$u^1_\alpha \cap \{x_\beta:\beta \in (\alpha,\lambda)\} = \emptyset$.
We can choose them as $\hL^+(X) > \lambda$.
We then choose an open neighborhood $u^2_\alpha$ of $x_\alpha$ such
that $c \ell(u^2_\alpha) \subseteq u^1_\alpha$.  We then define $\mathbf
c:[\lambda]^2 \rightarrow \{0,1\}$ as follows

\begin{enumerate}
\item[$(*)$]   if $\alpha < \beta$ then $\mathbf c\{\alpha,\beta\}=1$
iff $x_\alpha \in u^2_\beta$.
\end{enumerate}

We continue as in the proof of \ref{1t.14}, but now, in Case 2

\begin{enumerate}
\item[$(*)'_3$]   $\varepsilon < \zeta \Rightarrow
x_{\alpha_\varepsilon} \in u^2_{\beta_\zeta}$
\end{enumerate}

and let

\begin{enumerate}
\item[$(*)'_4$]   $y_\varepsilon := x_{\alpha_{2
\varepsilon}},u^3_\varepsilon := u^2_{\alpha_{2 \varepsilon}}
\backslash c \ell(u^2_{\beta_{2 \varepsilon +1}})$.
\end{enumerate}
\end{PROOF}

Now we come to our main result.
\begin{theorem}
\label{1t.41}
\underline{The Main Theorem}.  

It is consistent (using no large cardinals)
that:

\begin{enumerate}
\item[$(*)$]   $(\alpha) \quad 2^\mu$ is $\mu^+$ if $\mu$ is strong
limit singular and always $2^\mu$

\hskip25pt   is the successor of a singular cardinal,

\item[${{}}$]   $(\beta) \quad$  for every $\mu$ we have 
$\mu \le \chi < 2^\mu \Rightarrow 2^\chi = 2^\mu,$

\item[${{}}$]   $(\gamma) \quad \text{\rm hd}(X) \ge \theta \Leftrightarrow
\text{\rm hL}(X) \ge \theta \Leftrightarrow \text{\rm s}(X) 
\ge \theta$ for any limit cardinal $\theta$

\hskip25pt  and Hausdorff regular $(=T_3)$ topological space $X,$ 

\item[${{}}$]   $(\delta) \quad \text{\rm hd}(X) \le \text{\rm s}
(X)^{+3}$ and \text{\rm hL}$(X) \le \text{\rm s}(X)^{+3}$ for 
any Hausdorff regular

\hskip25pt $(= T_3)$ topological space,

\item[${{}}$]   $(\varepsilon) \quad$ in $(\delta)$ we can replace
{\rm s}$(X)^{+3}$ by {\rm s}$(X)^{+2}$ except when $s(X)$ is regular,

\item[${{}}$]   $(\zeta) \quad$ in particular, if $X$ is a (Hausdorff
regular topological space which is) 

\hskip25pt  Lindel\"of or of countable density or just $s(X) 
= \aleph_0$ \then 

\hskip25pt {\rm hd}$(X) + \text{\rm hL}(X) \le \aleph_2,$

\item[${{}}$]   $(\eta) \quad$ if $X$ is a Hausdorff
space\footnote{is interesting because usually $2^\chi = 2^{(\chi^+)}$,
see clause $(\alpha)$} \then \, $|X| < 2^{(\text{\rm hd}(X)^+)}$

\item[${{}}$]   $(\theta) \quad$ if $X$ is a Hausdorff space
\then \, $w(X) \le 2^{(\text{\rm hL}(X)^+)}$

\item[${{}}$]   $(\iota) \quad$ if $2^\mu > \mu^+$ \then \, $\mu^{++}
\rightarrow (\xi,(\xi;\xi)_\mu)^2$ for $\xi < \mu^+$.
\end{enumerate}
\end{theorem}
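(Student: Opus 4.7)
Proposal.

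My plan is a four-stage argument. In Stage~1, starting from a ground model $\bold V \models \text{GCH}$, I build an Easton-support class iteration $\langle \bbP_\alpha, \bbQ_\alpha : \alpha \in \mathrm{Ord}\rangle$ whose nontrivial stages occur at regular $\lambda$ satisfying $\lambda = \lambda^{<\lambda}$ in $\bold V^{\bbP_\lambda}$. At each such stage I pick $\mu_\lambda$ to be the successor of a singular cardinal strictly above $\lambda$ (with $\mu_\lambda = \lambda^+$ when $\lambda$ is already the successor of a strong-limit singular), set $\Theta_\lambda = \text{Reg}\cap[\lambda,\mu_\lambda]$, $\partial_\theta = \theta$, and force with $\bbQ_\lambda := \bbQ_{\bold p_\lambda}$ from Definition \ref{2B.16}. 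By Conclusion \ref{2B.29} the stage is $(<\lambda)$-complete with the $\mu_\lambda^+$-c.c., preserves cardinals and cofinalities outside $\Omega_{\bold p_\lambda}\subseteq[\lambda,\mu_\lambda)$, leaves $2^\theta$ unchanged for $\theta\notin[\lambda,\mu_\lambda)$, and forces $2^\lambda = \mu_\lambda$; standard Easton-style bookkeeping together with this stagewise completeness propagates GCH on each gap $[\mu_\lambda,\lambda')$ through the tail, so in the final class extension $\bold V[G]$ clauses $(\alpha)$ and $(\beta)$ hold.

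In Stage~2, let $\mu$ satisfy $2^\mu > \mu^+$ in $\bold V[G]$. When $\mu$ is regular it was a stage of the iteration, and at that stage $\mu,\mu^+$ are successive in $\Theta_\mu$ with $\mu^+$ not the successor of a singular, and the preserved GCH witnesses the arithmetic hypothesis $(\forall\alpha<\mu^+)(|\alpha|^{<\mu}<\mu^+)$ required by Claim \ref{2B.36}/Conclusion \ref{3c.87}; hence in $\bold V^{\bbP_\mu * \bbQ_\mu}$ we get $\mu^{++}\to(\xi,(\xi;\xi)_\sigma)^2$ for all $\xi,\sigma<\mu^+$. The tail past stage $\mu$ is $(<\mu^+)$-complete (every subsequent $\bbQ_{\lambda'}$ has $\lambda'>\mu_\mu>\mu^+$ and is $(<\lambda')$-complete), so the partition relation persists in $\bold V[G]$; taking $\sigma=\mu$ gives $(\iota)$. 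For singular $\mu$ with $2^\mu>\mu^+$ the partition is obtained at stage $\mu^+$ (which exists because $\mu^+ = (\mu^+)^{<\mu^+}$) via the same argument applied to the adjacent pair $\mu^+,\mu^{++}$ in $\Theta_{\mu^+}$, yielding the desired form by the monotonicity Observation \ref{1t.9}.

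In Stage~3, let $X$ be a Hausdorff regular space and $\kappa=s(X)$. For clause $(\delta)$, suppose a subspace $Y\subseteq X$ has density $\ge\kappa^{+3}$; by clause $(\alpha)$ the cardinal $2^{\kappa^+}$ is a successor of a singular and so $\ge\kappa^{+3}>\kappa^{++}$, whence $(\iota)$ at $\mu=\kappa^+$ gives $\kappa^{+3}\to(\xi,(\xi;\xi)_{\kappa^+})^2$ for $\xi<\kappa^{++}$, which with $\xi=\kappa^+$ and Observation \ref{1t.9}(2) yields the unpolarized form $\kappa^{+3}\to(\kappa^+,(\kappa^+;\kappa^+))^2$. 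Claim \ref{1t.14} applied to $Y$ produces a discrete subspace of size $\kappa^+>\kappa=s(X)$, a contradiction; Claim \ref{1t.21} gives the matching bound for $\text{hL}(X)$. Clause $(\varepsilon)$ (sharpening to $\kappa^{+2}$ when $2^\kappa>\kappa^+$) is obtained by applying $(\iota)$ directly at $\mu=\kappa$; clause $(\gamma)$ for a limit cardinal $\theta$ follows by letting $\kappa<\theta$ range over regulars and applying $(\delta)$, together with the trivial $s(X)\le\min(\text{hd}(X),\text{hL}(X))$; clause $(\zeta)$ is the special case $\kappa=\aleph_0$. Finally, clauses $(\eta)$ and $(\theta)$ combine the classical Hausdorff inequalities $|X|\le 2^{\text{hd}(X)}$ and $w(X)\le 2^{\text{hL}(X)}$ with clause $(\beta)$, which makes $2^\chi$ constant on $[\text{hd}(X),2^{\text{hd}(X)})$, so that $2^{\text{hd}(X)}=2^{\text{hd}(X)^+}$ and similarly for $\text{hL}$.

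The hardest step is Stage~1: arranging the Easton bookkeeping so that at every intended stage $\lambda$ the hypotheses of Conclusion \ref{3c.87} remain in force in the intermediate model (in particular $\lambda=\lambda^{<\lambda}$, $\mu_\lambda=\mu_\lambda^{<\mu_\lambda}$, and GCH on $[\lambda,\mu_\lambda]$), and so that the tail neither collapses a $\kappa^+$ nor alters $2^\kappa$ inside an interval where a partition relation has already been fixed. The spacing between successive stages $\lambda<\lambda'$ must strictly exceed $\mu_\lambda$, so as to keep $\Omega_{\bold p_\lambda}$ disjoint from the support of the next stage; combined with the requirement that each $2^\mu$ be a successor of a singular, this pins down the function $\lambda\mapsto\mu_\lambda$ quite tightly and constitutes the bulk of the technical work behind the global statement $(*)$.
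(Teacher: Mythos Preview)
Your overall architecture matches the paper's: start from GCH, run an Easton-support class iteration whose blocks are the forcings $\bbQ_{\bold p}$ from \S2, invoke Conclusion~\ref{3c.87}/Claim~\ref{2B.36} on each block to secure the half-graph partition relations, and then read off the topological clauses via Claims~\ref{1t.14} and~\ref{1t.21}. Two points, however, are not right as written.

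\textbf{Stage~2 indexing.} You assert ``when $\mu$ is regular it was a stage of the iteration'' and then speak of $\Theta_\mu$ and $\bold V^{\bbP_\mu * \bbQ_\mu}$. This is false under your own setup: by your bookkeeping the next nontrivial stage $\lambda'$ after $\lambda$ satisfies $\lambda' > \mu_\lambda$, so a typical regular $\mu$ with $2^\mu > \mu^+$ sits strictly inside some block $[\lambda,\mu_\lambda)$ and is \emph{not} a stage; there is no $\Theta_\mu$. The paper's move (and the fix you need) is to locate $\mu$ in the unique block $[\lambda_\varepsilon,\lambda_{\varepsilon+1})$ and observe that $\mu,\mu^+$ are successive members of $\Theta_\varepsilon = \text{Reg}\cap[\lambda_\varepsilon,\lambda_{\varepsilon+1}]$; then Claim~\ref{2B.36} at stage $\varepsilon$ (with $\kappa=\mu$, $\theta=\mu^+$) gives $\mu^{++}\to(\xi,(\xi;\xi)_\sigma)^2$ for $\xi,\sigma<\mu^+$, and the $(<\lambda_{\varepsilon+1})$-completeness of the tail preserves it. The singular case with $2^\mu>\mu^+$ is handled in the same block, not at a separate ``stage $\mu^+$.''

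\textbf{Clauses $(\eta)$ and $(\theta)$.} You invoke ``the classical Hausdorff inequalities $|X|\le 2^{\text{hd}(X)}$ and $w(X)\le 2^{\text{hL}(X)}$.'' The first is not a ZFC theorem; you have the roles of $\text{hd}$ and $\text{hL}$ swapped. De~Groot's inequality is $|X|\le 2^{\text{hL}(X)}$, and the paper uses exactly this: with $\chi=\text{hd}(X)$ one has $s(X)\le\chi$, so clause~$(\delta)$ gives $\text{hL}(X)\le\chi^{+3}$, whence $|X|\le 2^{\chi^{+3}}$, and clause~$(\beta)$ collapses this to $2^{\chi^+}$ (since $\chi^{+3}<2^\chi$ in the interesting case). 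Clause~$(\theta)$ is handled dually via $w(X)\le 2^{\text{hd}(X)}$ and a bound on $\text{hd}$ in terms of $\text{hL}$. Your direct shortcut through a nonexistent inequality does not work.
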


\begin{remark}
\label{1t.42} 
In the Theorem \ref{1t.41} above:

1) If we use less sharp results in \S1,\S2,\S3 we should above just use
$(\hd(X))^{+n(*)}$ for large enough $n(*)$.

2) We may like to improve clause $(\eta)$ to 
$\le 2^{\hd(X)}$.  If below we
choose $\mu_{\varepsilon +1}$ strongly inaccessible (so we need to
 assume $\mathbf V \models$ ``there are unboundedly many strong
 inaccessible cardinals and clause $(\alpha)$ is changed"), 
nothing is lost, we have
$\lambda_{\varepsilon +1}  = \mu_{\varepsilon +1}$ then we can add:

\begin{enumerate}
\item[$(\eta)^+$]   for any Hausdorff space $X,|X| < 2^{\hd(X)}$
except (possibly) when $\hd(X)$ is strong limit singular.
\end{enumerate}

3) Similarly for clause $(\theta)$ about $w(X) \le 2^{\hL(X)}$.

4) Probably using large cardinal we can eliminate also the
exceptional case in $(\eta)^+$; it seemed that a similar situation
is the one in Cummings-Shelah \cite{Sh:541}, 
but we have not looked into this.

5) We may wonder whether in clause $(\zeta)$ we can replace $\aleph_2$
   by $\aleph_1$ and similarly for other cardinals, hopefully
see \cite{Sh:F884}.
\end{remark}

\begin{PROOF}{\ref{1t.41}}
We can assume $\mathbf V$ satisfies G.C.H.  We choose
$\langle(\lambda_\varepsilon,\mu_\varepsilon):\varepsilon$ an
ordinal$\rangle$ such that:

\begin{enumerate}
\item[$\circledast$]   $(a) \quad \lambda_0 = \mu_0 = \aleph_0$,

\item[${{}}$]   $(b) \quad \lambda_\varepsilon < \text{ cf}
(\mu_{\varepsilon +1}) < \mu_{\varepsilon +1},$ 

\item[${{}}$]   $(c) \quad \lambda_{\varepsilon +1}$ is the first
regular $\ge \mu_{\varepsilon +1}$,

\item[${{}}$]   $(d) \quad$  for limit $\varepsilon$ we have
$\lambda_\varepsilon$ is the first regular cardinal

\hskip25pt  $\ge \mu_\varepsilon := \cup\{\lambda_\zeta:\zeta < \varepsilon\}$.
\end{enumerate}

Now let $\mathbf p_\varepsilon= (\lambda_\varepsilon,\lambda_{\varepsilon +1},
\Theta_\varepsilon,\bar\partial_\varepsilon)$ where
$\Theta_\varepsilon,\bar\partial_\varepsilon$ are defined 
by $\Theta_\varepsilon =
 \text{ Reg } \cap [\lambda_\varepsilon,\lambda_{\varepsilon +1}],\bar
 \partial_\varepsilon = \langle \partial^\varepsilon_\theta:\theta \in
 \Theta_\varepsilon\rangle,\partial^\varepsilon_\theta = \theta$, so
 are chosen as in \ref{3c.87}.

So $\langle \mathbf p_\varepsilon:\varepsilon$ an ordinal$\rangle$ is a
class.  We define an Easton support iteration $\langle 
\bbP_\varepsilon,\name{\bbQ}_\varepsilon:\varepsilon
\in \text{ Ord}\rangle$ so $\cup\{\bbP_\varepsilon:
\varepsilon \in \text{ Ord}\}$ is a class forcing,
choosing the $\bbP_\varepsilon$-name $\name{\bbQ}_\varepsilon$ 
such that $\Vdash_{\bbP_\varepsilon} ``\name{\bbQ}_\varepsilon = 
\bbQ_{\mathbf p_\varepsilon}$, i.e. $\name{\bbQ}_\varepsilon$ is 
defined as in Definition \ref{2B.16} for the parameter $\mathbf
p_\varepsilon$ (in the universe $\mathbf V^{\bbP_\varepsilon}$ of course".

As in $\mathbf V^{\mathbb P_\varepsilon}$ section two is applicable for
$\mathbf p_\varepsilon$ so in $\mathbf V^{\bbP_{\varepsilon +1}}$, 
the conclusions of
\ref{3c.87}, \ref{3c.89} hold and $2^{\lambda_\varepsilon} =
\lambda_{\varepsilon +1}$ so cardinal arithmetic should be clear, in
particular, clause $(\alpha)$ holds.  Of course, forcing with 
$\bbP_\infty/\bbP_{\varepsilon +1}$ does not change those conclusions as
it is $\lambda_{\varepsilon  +1}$-complete.

In $\mathbf V^{\mathbb P_\infty}$ we have enough cases of $\theta^+
\rightarrow (\xi,(\xi;\xi))^2$, i.e. clause $(\gamma)$ 
by \ref{2B.36}.  So, first, if $\chi \ge \text{ s}(X)$
belongs to $[\lambda_\varepsilon,\mu_{\varepsilon +1})$ and is regular we have
$\chi^{+2} \rightarrow (\chi;(\chi;\chi))^2$ and 
{\rm hd}$(X)$, {\rm hL}$(X) \le \chi^{+2}$.  But if $s(X) \in
[\lambda_\varepsilon,\mu_{\varepsilon +1})$ then $s(X)^+ <
\mu_{\varepsilon +1}$ recalling $\mu_\varepsilon$ is singular hence
$\hd(X),\hL(X) \le s(X)^{+3} < \mu_{\varepsilon +1}$. 
 
Second, if $\chi = s(X)$ belongs to no such interval then 
$\chi^+ = \lambda_\varepsilon,\chi = \mu_\varepsilon > 
\text{ cf}(\mu_\varepsilon)$ for some $\varepsilon$ hence 
recalling $\lambda_\varepsilon =
\lambda^{< \lambda_\varepsilon}_\varepsilon = 2^\chi$ (in 
$\mathbf V^{\bbP_\infty}$) we have the conclusion.
 So clause $(\delta)$ follows hence also clauses $(\gamma),(\varepsilon)$.

Let us deal with clause $(\eta)$, let $\chi = \text{ hd}(X)$.  First, 
if $\chi \in
[\lambda_\varepsilon,\mu_{\varepsilon +1})$ we get $\hL(X) \le \chi^{+3} <
\mu_{\varepsilon +1}$ hence $|X| \le 2^{\chi^{+3}} = 2^\chi$ by the classical
inequality of de-Groot, ($|X| \le 2^{\text{hL}(X)}$; see
\cite{Ju80}).  Second, if $\chi$ belongs to no such interval,
then $\chi = \mu_\varepsilon \wedge \chi^+ =
\lambda_\varepsilon,2^{\mu_\varepsilon} = 2^\chi$ for some $\varepsilon$.
So $|X| \le 2^{2^{\text{hL}(X)}} \le 2^{2^\chi} = 2^{\chi^+}$ as required.

Clause $(\theta)$ is proved similarly.  
\end{PROOF}

\begin{theorem}
\label{1t.44}  
If in $\mathbf V$ there is a class of
(strongly) inaccessible cardinals, \then \, in some forcing extension

\begin{enumerate}
    \item[$(*)$]   $(\alpha) \quad 2^\mu$ is $\mu^+$ when $\mu$ is a
    strong limit singular cardinal and is a weakly inaccessible cardinal
    otherwise.
    
    \item[$(*)$]   $(\beta) - (\iota) \quad$ as in Theorem \ref{1t.41}.
\end{enumerate}
\end{theorem}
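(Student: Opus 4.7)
The plan is to follow the construction of Theorem \ref{1t.41} verbatim, with the single modification foreshadowed in Remark \ref{1t.42}(2): at each successor stage, $\mu_{\varepsilon+1}$ is chosen to be a strongly inaccessible cardinal of $\bold V$ above $\lambda_\varepsilon$ (available by the class hypothesis), instead of a singular cardinal with $\lambda_\varepsilon < \text{cf}(\mu_{\varepsilon+1}) < \mu_{\varepsilon+1}$. Then $\lambda_{\varepsilon+1}$, the first regular cardinal $\ge \mu_{\varepsilon+1}$, equals $\mu_{\varepsilon+1}$ itself, so the jump $2^{\lambda_\varepsilon} = \lambda_{\varepsilon+1}$ lands on a weakly inaccessible cardinal in the extension rather than on the successor of a singular cardinal.

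Concretely, I define $\langle(\lambda_\varepsilon,\mu_\varepsilon):\varepsilon \in \text{Ord}\rangle$ exactly as in the proof of \ref{1t.41} except that clause $\circledast(b)$ there is replaced by ``$\mu_{\varepsilon+1}$ is strongly inaccessible in $\bold V$ and $\mu_{\varepsilon+1} > \lambda_\varepsilon$''. For $\varepsilon$ a limit ordinal, clause $\circledast(d)$ is unchanged, so $\mu_\varepsilon = \sup\{\lambda_\zeta:\zeta<\varepsilon\}$ is a supremum of strongly inaccessible cardinals, hence strong limit, and $\lambda_\varepsilon$ is the first regular $\ge \mu_\varepsilon$ (either $\mu_\varepsilon$ itself if $\text{cf}(\varepsilon)=\mu_\varepsilon$, or $\mu_\varepsilon^+$ otherwise). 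Set $\bold p_\varepsilon = (\lambda_\varepsilon,\lambda_{\varepsilon+1},\Theta_\varepsilon,\bar\partial_\varepsilon)$ with $\Theta_\varepsilon = \text{Reg}\cap[\lambda_\varepsilon,\lambda_{\varepsilon+1}]$ and $\partial^\varepsilon_\theta = \theta$, and run the class-length Easton-support iteration $\langle\bbP_\varepsilon,\name\bbQ_\varepsilon:\varepsilon \in \text{Ord}\rangle$ with $\Vdash_{\bbP_\varepsilon}``\name\bbQ_\varepsilon=\bbQ_{\bold p_\varepsilon}"$ as in \ref{1t.41}.

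The verification of clause $(\alpha)$ rests on two facts. First, by Conclusion \ref{2B.29}, in $\bold V^{\bbP_{\varepsilon+1}}$ we have $2^\mu=\lambda_{\varepsilon+1}$ for every $\mu \in [\lambda_\varepsilon,\lambda_{\varepsilon+1})$, and these values persist to $\bold V^{\bbP_\infty}$ because the tail $\bbP_\infty/\bbP_{\varepsilon+1}$ is $(<\lambda_{\varepsilon+1})$-complete. Second, since $\mu_{\varepsilon+1}$ is inaccessible in $\bold V$ and strictly above every $\lambda_\zeta$ for $\zeta<\varepsilon$, the Easton bookkeeping gives $|\bbP_\varepsilon|<\mu_{\varepsilon+1}$, so $\mu_{\varepsilon+1}=\lambda_{\varepsilon+1}$ remains regular and is not collapsed in the full extension; it is also a limit cardinal there (since no $\chi<\mu_{\varepsilon+1}$ has $\chi^+=\mu_{\varepsilon+1}$), hence weakly inaccessible. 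For $\mu$ strong limit singular, $\mu=\mu_\varepsilon$ for some limit $\varepsilon$ of cofinality $<\mu_\varepsilon$, and $2^\mu=\mu^+=\lambda_\varepsilon$ as in \ref{1t.41}.

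Clauses $(\beta)$--$(\iota)$ transfer verbatim from Theorem \ref{1t.41}: $(\gamma),(\delta),(\varepsilon),(\iota)$ come from Conclusion \ref{3c.89} applied block by block on $[\lambda_\varepsilon,\lambda_{\varepsilon+1})$, and $(\eta),(\theta)$ combine the new cardinal arithmetic with the classical inequalities $|X|\le 2^{\text{hL}(X)}$ and $w(X)\le 2^{\text{hL}(X)}$. The main obstacle is bookkeeping for the class-length Easton iteration: one must check, stage by stage, that the hypotheses of \ref{2B.48} hold in $\bold V^{\bbP_\varepsilon}$, namely $\lambda_\varepsilon=\lambda_\varepsilon^{<\lambda_\varepsilon}$ and GCH in $[\lambda_\varepsilon,\mu_{\varepsilon+1}]$, which in turn reduces to $|\bbP_\varepsilon|<\mu_{\varepsilon+1}$ and the standard preservation of GCH by Easton iterations in intervals disjoint from the forcing activity. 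Because the inaccessibles $\mu_{\varepsilon+1}$ are picked comfortably above $|\bbP_\varepsilon|$, both are routine and carry the argument through.
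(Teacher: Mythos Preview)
Your proposal is correct and takes essentially the same approach as the paper: the paper's own proof is the single line ``As in the proof of Theorem \ref{1t.41}'', and the intended modification is precisely the one spelled out in Remark \ref{1t.42}(2), namely choosing each $\mu_{\varepsilon+1}$ strongly inaccessible so that $\lambda_{\varepsilon+1}=\mu_{\varepsilon+1}$. You have simply written out the details the paper leaves to the reader.
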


\begin{proof}  As in the proof of Theorem \ref{1t.41}.
\end{proof}

\begin{claim}
\label{1t.33}  
Assume $\chi \rightarrow [\theta]^2_{2 \kappa,2}$ where 
$\kappa \ge 2,\chi \le 2^\lambda$ and $\lambda
= \lambda^{< \lambda} < \theta = \text{\rm cf}(\theta)$.  \Then \, 
$\chi \rightarrow (\theta,(\theta;\theta)_\kappa)^2$.
\end{claim}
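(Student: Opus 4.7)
My plan is to reduce the claim to the square-bracket hypothesis by refining the given coloring using the tree structure coming from $\chi \hookrightarrow {}^\lambda 2$. First I fix distinct $\eta_\alpha \in {}^\lambda 2$ for $\alpha < \chi$ (possible by $\chi \le 2^\lambda$), and for $\alpha \ne \beta$ set $\Delta(\alpha,\beta) = \min\{i < \lambda : \eta_\alpha(i) \ne \eta_\beta(i)\}$; for $\alpha < \beta$ the branching bit $b(\alpha,\beta) := \eta_\alpha(\Delta(\alpha,\beta))$ equals $0$ iff $\eta_\alpha <_{\mathrm{lex}} \eta_\beta$. Given $c:[\chi]^2 \to 1+\kappa$, I form the refined coloring $c^*\{\alpha,\beta\} = (c\{\alpha,\beta\}, b(\alpha,\beta)) \in (1+\kappa)\times 2$, embedded into $2\kappa$ (immediate for $\kappa$ infinite; for $\kappa$ finite a small identification of the two ``zero-color'' codes is used). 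Applying $\chi \to [\theta]^2_{2\kappa,2}$ to $c^*$ yields $H \subseteq \chi$ of order type $\theta$ on which $c^*$ takes at most $2$ values.

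The easy case is when the first-coordinate projection of these values is a single $\iota$, so $c$ is constant $\iota$ on $[H]^2$: if $\iota = 0$ the first $\theta$ elements of $H$ are the required $\alpha_i$'s (with $\varepsilon = 0$), and if $\iota \ge 1$, enumerating $H = \{\gamma_\xi : \xi < \theta\}$ in increasing order and setting $\alpha_i = \gamma_{2i}$, $\beta_i = \gamma_{2i+1}$ yields the half-graph in color $\varepsilon = \iota$. Otherwise $c^*$ takes two values with distinct first coordinates $\iota_0 \ne \iota_1$ tied to distinct branching bits $b_0 \ne b_1$ (the remaining a priori possibility, two distinct first coordinates sharing a single bit, is ruled out below); WLOG $b_0 = 0$ and $b_1 = 1$, so $c\{\alpha,\beta\} = \iota_\ell \iff b(\alpha,\beta) = \ell$ on $[H]^2$.

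The main combinatorial step is to find a balanced splitting node $\nu \in 2^{<\lambda}$ inside $H$, meaning both $H_0 := \{\alpha \in H : \eta_\alpha \supseteq \nu,\, \eta_\alpha(|\nu|) = 0\}$ and $H_1 := \{\alpha \in H : \eta_\alpha \supseteq \nu,\, \eta_\alpha(|\nu|) = 1\}$ have order type $\theta$. Consider $T^* = \{\nu \in 2^{<\lambda} : H_\nu \text{ has order type } \theta\}$, which contains $\emptyset$ since $H_\emptyset = H$; if no balanced split existed in $T^*$ then every node of $T^*$ would have at most one child in $T^*$, so $T^*$ would be a chain of some length $\le \lambda$, and partitioning $H$ by the first level at which $\eta_\alpha$ exits this chain writes $H$ as a union of at most $\lambda + 1$ sets each of size $<\theta$, contradicting $|H| = \theta$ by regularity of $\theta > \lambda = \lambda^{<\lambda}$. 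Hence a balanced split exists; WLOG $\iota_1 \ge 1$, and since both $H_0, H_1$ are cofinal in $H$ (being type-$\theta$ subsets of a type-$\theta$ regular set), I interleave to pick $\alpha_i \in H_1$, $\beta_i \in H_0$ with $\alpha_0 < \beta_0 < \alpha_1 < \beta_1 < \cdots$ for $i < \theta$; then for every $i \le j$ one has $\alpha_i < \beta_j$ and $\Delta(\alpha_i,\beta_j) = |\nu|$ with $\eta_{\alpha_i}(|\nu|) = 1$, giving $b(\alpha_i,\beta_j) = 1$ and hence $c\{\alpha_i,\beta_j\} = \iota_1$, which is the required half-graph with $\varepsilon = \iota_1$. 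The excluded ``constant-bit, two-color'' scenario is impossible because a balanced split would then place all of $H_0$ natural-below all of $H_1$, contradicting the cofinality of $H_0$ in $H$. The main obstacle and heart of the argument is the balanced-splitting existence together with the careful interleaving; everything else is routine bookkeeping.
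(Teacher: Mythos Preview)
Your proof is correct and follows essentially the same route as the paper's: refine the given coloring by the lexicographic/branching bit coming from an injection of $\chi$ into ${}^\lambda 2$, apply $\chi\to[\theta]^2_{2\kappa,2}$, dispose of the constant-first-coordinate case directly, and in the remaining case locate a node with two ``large'' extensions and interleave to obtain the half-graph. Your explicit balanced-splitting argument is exactly what the paper abbreviates as ``by the Sierpi\'nski colouring properties as $\theta>\lambda$'' (and your use of it to exclude the constant-bit two-color case is the same observation), so the two proofs coincide in substance; the only cosmetic point is that your ``WLOG $\iota_1\ge 1$'' really means choosing which of $H_0,H_1$ to feed the $\alpha_i$'s from according to which of $\iota_0,\iota_1$ is nonzero, and the finite-$\kappa$ color-count bookkeeping is equally informal in the paper.
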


\begin{PROOF}{\ref{1t.33}}
    Let $\mathbf c:[\chi]^2 \rightarrow \kappa$ be given.
    
    Let $\eta_\alpha \in {}^\lambda 2$ for $\alpha < \chi$ be pairwise
    distinct.  We define $\mathbf d:[\chi]^2 \rightarrow 2 \kappa$ by: for
    $\alpha < \beta < \chi$ let $\mathbf d\{\alpha,\beta\}$ be $2\varepsilon
    + \ell$ when $\mathbf c\{\alpha,\beta\} = \varepsilon$ and $\ell = 1$
    iff $\ell \ne 0$ iff $\eta_\alpha <_{\text{lex}} \eta_\beta$
    (i.e. $\eta_\alpha(\ell g(\eta_\alpha \cap \eta_\beta)) <
    \eta_\beta(\ell g(\eta_\alpha \cap \eta_\beta))$.  As we are
    assuming $\chi \rightarrow [\theta]^2_{2 \kappa,2}$ there is ${\cU}
    \in [\chi]^\theta$ such that Rang$(\mathbf d \restriction [{\cU}]^2)$
    has $\le 2$ members, \wilog \, $\rm{otp}({\cU}) = \theta$.  If the
    number of members of Rang$(\mathbf d \restriction [{\cU}]^2)$
    is one we are done, so assume it is $\{2 \varepsilon_0 + \ell_0,2
    \varepsilon_1 + \ell_1\}$ where $\varepsilon_0,\varepsilon_1 < \kappa$ and
    $\ell_0,\ell_1 < 2$.  But we cannot have $\ell_1 = \ell_2$ by 
    the Sierpinski colouring properties as $\theta > \lambda$
    hence \wilog \, $\ell_0 = 0,\ell_1 = 1$.  If $\varepsilon_0 = \varepsilon_1
    = 0$ we are done, as then Case $(c)_0$ of Definition \ref{0z.4}(2)
    holds, so assume $\ell \in \{0,1\} \Rightarrow 
    \varepsilon_\ell \ne 0$.  Let $\Lambda = \{\eta \in
    {}^{\lambda >}2$ \,: for $\theta$ ordinals $\alpha \in {\cU}$ we have
    $\eta \triangleleft \eta_\alpha\}$.  Now $\Lambda$ has two
    $\triangleleft$-incomparable members (otherwise we get a contradiction by
    cf$(\theta) > \lambda$) say $\nu_0,\nu_1 \in \Lambda$ are
    $\triangleleft$-incomparable and \wilog \, $\nu_0 <_{\text{lex}} \nu_1$.
    
    So,
    
    \begin{enumerate}
        \item[$(*)$]   if $\nu_0 \trianglelefteq \eta_\alpha$ and $\nu_1
        \triangleleft \eta_\beta$ and $\alpha < \beta$ then $\mathbf
        c\{\alpha,\beta\} = \varepsilon_0$
        
        \item[$(*)$]   if $\nu_1 \triangleleft \eta_\alpha,\nu_0
        \triangleleft \eta_\alpha$ and $\alpha < \beta$ then $\mathbf
        c\{\alpha,\beta\} = \varepsilon_1$.
    \end{enumerate}
    
    As $\theta$ is regular and $\rm{otp}(\cU) = \theta$ we can choose
    $\alpha_\varepsilon,\beta_\varepsilon$ by induction on $\varepsilon <
    \theta$ such that:
    
    \begin{enumerate}
    \item[$\odot$]  $(a) \quad \alpha_\varepsilon \in \cU$ and
    $\alpha_\varepsilon > \text{ sup}\{\beta_\zeta:\zeta < \varepsilon\},$
    
    \item[${{}}$]  $(b) \quad \nu_0 < \eta_{\alpha_\varepsilon},$
    
    \item[${{}}$]  $(c) \quad \beta_\varepsilon \in \cU$ is 
    $> \alpha_\varepsilon,$
    
    \item[${{}}$]  $(d) \quad \nu_1 \triangleleft \eta_{\beta_\alpha}$.
    \end{enumerate}
    
    So Case $(c)_1$ of Definition \ref{0z.4}(2) holds.  So we are done.
\end{PROOF}

We can remark also

\begin{claim}\label{1t.37}  
    Assume $\lambda = \lambda^{< \lambda} <
    \text{\rm cf}(\theta)$ and $\chi \le 2^\lambda$ and $\chi \rightarrow
    [\theta]^2_{2 \kappa,2}$.  \Then \, for every ordinal $\gamma <
    \lambda^+$ we have $\chi \rightarrow (\gamma)^2_\kappa$.
\end{claim}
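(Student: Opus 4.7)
The plan is to adapt the proof of \ref{1t.33} to extract not just a polarized configuration but a monochromatic set of any prescribed order type below $\lambda^+$. Given $\bold c \colon [\chi]^2 \to \kappa$, fix pairwise distinct $\eta_\alpha \in {}^\lambda 2$ for $\alpha < \chi$ (possible since $\chi \le 2^\lambda$) and define $\bold d \colon [\chi]^2 \to 2\kappa$ by $\bold d\{\alpha,\beta\} = 2\bold c\{\alpha,\beta\} + \ell$, where for $\alpha < \beta$ the bit $\ell$ is $1$ iff $\eta_\alpha <_{\text{lex}} \eta_\beta$. Apply the hypothesis $\chi \to [\theta]^2_{2\kappa,2}$ to obtain $U \subseteq \chi$ with $|U| = \theta$ and $|\text{Rang}(\bold d \restriction [U]^2)| \le 2$. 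The Sierpinski-style obstruction from \ref{1t.33} (using $\cf(\theta) > \lambda$) forbids the two $\bold d$-values from sharing their $\ell$-bit, so either only one value appears (then $\bold c$ is constant on $[U]^2$ and any $\gamma$-initial segment of $U$ works, since $\gamma < \lambda^+ \le \cf(\theta) \le \text{otp}(U)$) or the two values are $2\varepsilon_0 + 0$ and $2\varepsilon_1 + 1$.

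In the two-value case, pairs $\alpha < \beta$ in $U$ get $\bold c$-color $\varepsilon_1$ iff $\eta_\alpha <_{\text{lex}} \eta_\beta$ and $\varepsilon_0$ iff $\eta_\alpha >_{\text{lex}} \eta_\beta$. Consequently, a $\bold c$-monochromatic subset of $U$ of order type $\gamma$ is the same as a subset $W \subseteq U$ of order type $\gamma$ on which $\alpha \mapsto \eta_\alpha$ is strictly lex-monotone in one fixed direction. I would construct $W$ by a tree-splitting recursion inside $T := \{\eta_\alpha \restriction \zeta : \alpha \in U,\ \zeta < \lambda\}$, which satisfies $|T| \le 2^{<\lambda} \le \lambda^{<\lambda} = \lambda < \cf(\theta)$. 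Setting $U_\nu := \{\alpha \in U : \nu \triangleleft \eta_\alpha\}$ and $\Lambda := \{\nu \in T : |U_\nu| = \theta\}$, a standard counting argument (using that the $\eta_\alpha$'s are pairwise distinct, that side-branches off a hypothetical non-splitting branch collect at most $\lambda$-many small $U_\nu$'s, and that $\cf(\theta) > \lambda$) shows every $\nu \in \Lambda$ has a splitting descendant in $\Lambda$, with both immediate successors lying in $\Lambda$.

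Recursively for $i < \gamma$, pick a splitting $\rho_i \in \Lambda$ with $\rho_{i+1} \supseteq \rho_i \frown 1$, together with $\alpha_i \in U_{\rho_i \frown 0}$ lying above all previous $\alpha_j$ in ordinal order. Such $\alpha_i$ exists because $U_{\rho_i \frown 0}$ is a subset of $U$ of size $\theta$ (since $\rho_i$ is a splitting node of $\Lambda$), hence cofinal in $U$, and only $|i| < \cf(\theta)$ ordinal bounds in $U$ have to be exceeded. Then for $j < i$ the sequence $\eta_{\alpha_j}$ begins with $\rho_j \frown 0$ while $\eta_{\alpha_i}$ begins with $\rho_j \frown 1$, forcing $\eta_{\alpha_j} <_{\text{lex}} \eta_{\alpha_i}$; hence $W := \{\alpha_i : i < \gamma\}$ is simultaneously ordinal-increasing and lex-increasing, and therefore $\bold c$-monochromatic of color $\varepsilon_1$ of the desired order type $\gamma$.

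The main obstacle is controlling the limit stages $\delta < \gamma$ of this recursion: the ``limit branch'' $\rho := \bigcup_{i<\delta} \rho_i$ sits at tree-level $\sup_{i<\delta}|\rho_i| < \lambda$ (using $|\delta| \le \lambda$), but one must arrange that $\rho \in \Lambda$, i.e.\ $|U_\rho| = \theta$, so that splitting can continue above $\rho$. This requires foresight in selecting each $\rho_i$ so that the aggregate branch remains ``fat'', and it uses all of $\lambda = \lambda^{<\lambda} < \cf(\theta)$, the cardinality bound $|T| \le \lambda$, and the perfect-tree-like abundance of splitting nodes in $\Lambda$ (together with the fact that we only need to keep the branch alive for fewer than $\lambda^+$ limit stages); this is the essential combinatorial step beyond \ref{1t.33}.
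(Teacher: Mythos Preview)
Your reduction (Sierpi\'nski coding $\bold d$, extracting $U$ with at most two $\bold d$-values, and then seeking a lex-monotone ordinal-increasing subsequence of type $\gamma$) is exactly the paper's approach. The paper then simply says ``prove by induction on $\gamma < \lambda^+$'' that such a subsequence exists, without details; you instead attempt a direct recursion of length $\gamma$ along a single branch of the tree.

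That direct recursion has a genuine gap. Your claim that the limit node $\rho = \bigcup_{i<\delta}\rho_i$ sits at level $\sup_{i<\delta}|\rho_i| < \lambda$ ``using $|\delta|\le\lambda$'' is false precisely when $|\delta|=\lambda$: the lengths $|\rho_i|$ form a strictly increasing sequence of ordinals $<\lambda$, so once you have $\lambda$ many of them their supremum is $\lambda$ and the branch has left the tree ${}^{<\lambda}2$. This occurs for every $\gamma$ with $\lambda \le \gamma < \lambda^+$, so your construction as written only handles $\gamma<\lambda$. Conversely, the difficulty you flag as the ``main obstacle''---whether the limit node $\rho$ stays in $\Lambda$---is \emph{not} actually a problem whenever $|\rho|<\lambda$: the set $U_{\rho_0}\setminus U_\rho$ is a union over the $<\lambda$ levels below $|\rho|$ of the ``off-branch'' pieces, each of size $<\theta$, and $\lambda<\cf(\theta)$ gives $|U_\rho|=\theta$ automatically.

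The clean fix is the induction on $\gamma$ that the paper hints at. Strengthen the statement to: for every $V$ with $|V|=\theta$ (and the $\eta_\alpha$'s distinct on $V$) and every bound $\beta$, there is an ordinal-increasing lex-increasing $W\subseteq V\cap(\beta,\chi)$ of type $\gamma$. Successor is immediate from one splitting. For limit $\gamma$ with $\cf(\gamma)=\sigma\le\lambda$, write $\gamma=\sum_{i<\sigma}\gamma_i$ and run your branch construction for only $\sigma$ steps, at step $i$ applying the inductive hypothesis inside $U_{\rho_i\frown 0}$ (above everything built so far) to get a block of type $\gamma_i$, then continuing into $U_{\rho_i\frown 1}$. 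Now every limit stage $\delta$ of this inner recursion satisfies $\delta<\sigma\le\lambda$, hence $|\delta|<\lambda$ by regularity of $\lambda$, so $|\rho_\delta|<\lambda$ and (by the previous paragraph) $\rho_\delta\in\Lambda$. At stage $\sigma$ itself you are finished and need not extend the branch further.
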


\begin{PROOF}{\ref{1t.37}}  
Without loss of generality $\kappa \ge 2$.

So let $\mathbf c:[\chi]^2 \rightarrow \kappa$.  Choose
$\langle \eta_\alpha:\alpha < \chi\rangle$ and $\mathbf d$ as in the
proof of \ref{1t.33} and let ${\cU} \subseteq \chi$ of order type
$\theta$ and $\{2 \varepsilon_0,2 \varepsilon_1+1\}$ be as there so
$\varepsilon_0,\varepsilon_1 < \kappa$.

As $\{\eta_\alpha:\alpha \in {\cU}\}$ is a subset of ${}^{\lambda
>}2$ of cardinality $\theta  > \lambda = \lambda^{< \lambda}$ clearly
(e.g. prove by induction on $\gamma < \lambda^+$ that) 
for every such ${\cU}$ there is ${\cU}' \subseteq {\cU}$ 
of order type $\gamma$ such that $\langle \eta_\alpha:\alpha \in
{\cU}'\rangle$ is $<_{\text{lex}}$-increasing.  So ${\cU}'$ is
as required, i.e. $\mathbf c \restriction [\{\eta_\alpha:\alpha \in
{\cU}'\}]^2$ is constantly $\varepsilon_0$ (of course also
$\varepsilon_1$ is O.K. if we use $<_{\text{lex}}$-decreasing
sequence). 
\end{PROOF}

\begin{remark}
If we use versions of $\chi \rightarrow [\theta]^2_{\kappa,2}$
with privilege positions for the value $0$, we can get corresponding
better results in \ref{1t.33}, \ref{1t.37}.
\end{remark}
\newpage

 %  PRIVATE PART 1, FINAL 

\bibliographystyle{amsalpha}
\bibliography{shlhetal}

\newcommand{\etalchar}[1]{$^{#1}$}
\providecommand{\bysame}{\leavevmode\hbox to3em{\hrulefill}\thinspace}
\providecommand{\MR}{\relax\ifhmode\unskip\space\fi MR }
% \MRhref is called by the amsart/book/proc definition of \MR.
\providecommand{\MRhref}[2]{%
  \href{http://www.ams.org/mathscinet-getitem?mr=#1}{#2}
}
\providecommand{\href}[2]{#2}
\begin{thebibliography}{EHMR84}

\bibitem[BH73]{BaHa73}
J.~Baumgartner and A.~Hajnal, \emph{A proof (involving martin's axiom) of a
  partition relation}, Polska Akademia Nauk. Fundamenta Mathematicae
  \textbf{78(3)} (1973), 193--203.

\bibitem[CS95]{Sh:541}
James Cummings and Saharon Shelah, \emph{{Cardinal invariants above the
  continuum}}, Ann. Pure Appl. Logic \textbf{75} (1995), no.~3, 251--268,
  \href{https://arxiv.org/abs/math/9509228}{arXiv: math/9509228}. \MR{1355135}

\bibitem[EH78]{ErHa78}
Paul Erd{\H{o}}s and Andras Hajnal, \emph{Embedding theorems for graphs
  establishing negative partition relations}, Period.Math.Hungar. \textbf{9}
  (1978), 205--230.

\bibitem[EHMR84]{EHMR}
Paul Erd{\H{o}}s, Andras Hajnal, A.~Mat{\'e}, and Richard Rado,
  \emph{Combinatorial set theory: Partition relations for cardinals}, Studies
  in Logic and the Foundation of Math., vol. 106, North--Holland Publ. Co,
  Amsterdam, 1984.

\bibitem[Gal75]{Gal75}
F.~Galvin, \emph{On a partition theorem of baumgartner and hajnal}, {Infinite
  and Finite Sets (Colloq., Keszthely, 1973; dedicated to P. Erd\H{o}s on his
  60th Birthday)} (Amsterdam), Colloq. Math. Soc. J\'anos Bolyai, vol.~10,
  North-Holland, 1975, pp.~711--729.

\bibitem[JS08]{Sh:899}
Istv{\'a}n Juh{\'a}sz and Saharon Shelah, \emph{{Hereditarily Lindel\"of spaces
  of singular density}}, Studia Sci. Math. Hungar. \textbf{45} (2008), no.~4,
  557--562, \href{https://arxiv.org/abs/math/0702295}{arXiv: math/0702295}.
  \MR{2641451}

\bibitem[Juh80]{Ju80}
Istvan Juh{\'a}sz, \emph{Cardinal functions in topology---ten years later.
  second edition}, {Mathematical Centre Tracts}, vol. 123, Mathematisch
  Centrum, Amsterdam, 1980.

\bibitem[S{\etalchar{+}}]{Sh:F884}
S.~Shelah et~al., \emph{Tba}, In preparation. Preliminary number: Sh:F884.

\bibitem[She81]{Sh:95}
Saharon Shelah, \emph{{Canonization theorems and applications}}, J. Symbolic
  Logic \textbf{46} (1981), no.~2, 345--353. \MR{613287}

\bibitem[She88]{Sh:276}
\bysame, \emph{{Was Sierpi\'nski right? I}}, Israel J. Math. \textbf{62}
  (1988), no.~3, 355--380. \MR{955139}

\bibitem[She89]{Sh:289}
\bysame, \emph{{Consistency of positive partition theorems for graphs and
  models}}, {Set theory and its applications (Toronto, ON, 1987)}, Lecture
  Notes in Math., vol. 1401, Springer, Berlin, 1989, pp.~167--193. \MR{1031773}

\bibitem[She92]{Sh:288}
\bysame, \emph{{Strong partition relations below the power set: consistency;
  was Sierpi\'nski right? II}}, {Sets, graphs and numbers (Budapest, 1991)},
  Colloq. Math. Soc. J\'anos Bolyai, vol.~60, North-Holland, Amsterdam, 1992,
  \href{https://arxiv.org/abs/math/9201244}{arXiv: math/9201244}, pp.~637--668.
  \MR{1218224}

\bibitem[She96]{Sh:481}
\bysame, \emph{{Was Sierpi\'nski right? III. Can continuum-c.c. times c.c.c. be
  continuum-c.c.?}}, Ann. Pure Appl. Logic \textbf{78} (1996), no.~1-3,
  259--269, \href{https://arxiv.org/abs/math/9509226}{arXiv: math/9509226}.
  \MR{1395402}

\bibitem[She00]{Sh:546}
\bysame, \emph{{Was Sierpi\'nski right? IV}}, J. Symbolic Logic \textbf{65}
  (2000), no.~3, 1031--1054, \href{https://arxiv.org/abs/math/9712282}{arXiv:
  math/9712282}. \MR{1791363}

\bibitem[SS01]{Sh:608}
Saharon Shelah and Lee~J. Stanley, \emph{{Forcing many positive polarized
  partition relations between a cardinal and its powerset}}, J. Symbolic Logic
  \textbf{66} (2001), no.~3, 1359--1370,
  \href{https://arxiv.org/abs/math/9710216}{arXiv: math/9710216}. \MR{1856747}

\end{thebibliography}

\end{document}